\documentclass[a4paper,11pt]{article}
\usepackage[centertags]{amsmath}
\usepackage{amsfonts}
\usepackage{amssymb}
\usepackage{amsthm}
\usepackage{newlfont}
\usepackage{graphicx}
\usepackage{t1enc}
\usepackage[latin1]{inputenc}
\usepackage[english]{babel}
\newlength{\defbaselineskip}
\newcommand{\setlinespacing}[1]%
           {\setlength{\baselineskip}{#1 \defbaselineskip}}
\newcommand{\bq }{\begin{equation}}
\newcommand{\eq }{\end{equation}}
\newcommand{\bbb }{\begin{eqnarray}}
\newcommand{\eee }{\end{eqnarray}}
\newcommand{\bb }{\begin{eqnarray*}}
\newcommand{\ee }{\end{eqnarray*}}

\newcommand{\ed }{\end{document}}
\theoremstyle{plain}
\newtheorem{theorem}{Theorem}[section]
\newtheorem{definition}{Definition}[section]
\newtheorem{example}{Example}[section]
\newtheorem{proposition}{Proposition}[section]
\newtheorem{corollary} {Corollary}[section]

\newtheorem{remark}{Remark}[section]
\newtheorem{lemma}{Lemma}[section]

\numberwithin{equation}{section}
\begin{document}
\begin{center}
\Large \textbf{Lyra Almost Ricci-Bourguignon Solitons on Twisted Warped Product Manifolds}
 \end{center}
\centerline{\bf Ayman Elsharkawy${}^{1*}$, Eman Ghareeb Rezk ${}^{2}$, Uday Chand De$^3$,} \centerline{\bf Emad F. Wanas$^4$ and Abdelrahman M. Tawfiq$^5$}
\centerline{\small${}^{1}$Department of Mathematics, Faculty of Science, Tanta University, Tanta, Egypt.}
\centerline{\small ayman\_ramadan@science.tanta.edu.eg}
\centerline{\small $^{2}$ Mathematical science department, College of science, Princess Nourah  }
\centerline{\small bint Abdlrahman University, P.O.Box 84428, Riyadh 11671, Saudi Arabia}
\centerline{\small  egrezk@pnu.edu.sa}
\centerline{\small${}^{3}$ Department of Pure Mathematics, University of Calcutta,} \centerline{\small 35 Ballygaunge Circular Road, Kolkata 700019, West Bengal, India.}
\centerline{\small uc\_de@yahoo.com}
\centerline{\small $^4$ Institute of Basic and Applied Science, College of Engineering and Technology,}\centerline{ Arab Academy for Science, Technology and Maritime Transport, Alexandria, Egypt}\centerline{ efw@aast.edu}
\centerline{\small${}^{5}$Department of Mathematics, Faculty of Education, Ain-Shams University, Cairo, Egypt.}
\centerline{abdelrhmanmagdi@edu.asu.edu.eg}

\begin{abstract}
	This paper defines Lyra almost Ricci--Bourguignon solitons on twisted warped product manifolds and investigates the interaction between the twisting function, the Lyra scale, and the soliton potential field. We derive the horizontal, vertical, and mixed components of the soliton equation and obtain trace, gradient, and factor-inheritance characterizations. The mixed equation yields new rigidity phenomena: a base-dependent Lyra scale preserves the ordinary separability obstruction, whereas a fiber-dependent scale admits a weighted compensation law that allows genuinely nonseparable twisting functions. We also establish Einstein reductions for conformal, Killing, homothetic, concurrent, and gradient potential fields. Exact flat and nonflat examples, together with local and compact nonexistence results, demonstrate the geometric significance of the scale--twisting interaction.
\end{abstract}

\textbf{Keywords.} Lyra manifold; Lyra scale; almost Ricci--Bourguignon soliton;
twisted warped product; Einstein metric; factor inheritance;
scale--twisting compensation; rigidity.

\textbf{2010 Mathematics Subject Classification.} 53C25; 53C21; 53C24; 53E20.

\section{Introduction}
\label{sec:introduction}

The Ricci flow introduced by Hamilton is a central evolution
equation in geometric analysis \cite{Hamilton1982}, while its
self-similar solutions connect geometric flows with Einstein
geometry \cite{Besse2008}. Bourguignon proposed a scalar-curvature
modification of the Ricci tensor \cite{Bourguignon1981}, leading to
the Ricci--Bourguignon flow studied in
\cite{CatinoEtAl2017}. Ricci--Bourguignon solitons and almost
solitons were subsequently investigated in \cite{Dwivedi2021}.
Related extensions include almost
\(\eta\)-Ricci--Bourguignon solitons \cite{BlagaTastan2021},
triviality criteria \cite{Ghosh2022}, and recent characterizations
based on the potential field and curvature
\cite{AlSodaisEtAl2025}.

Product manifolds provide a natural framework for decomposing
Einstein-type equations. Warped products were introduced by Bishop
and O'Neill \cite{BishopONeill1969}, and their semi-Riemannian
geometry was developed in \cite{ONeill1983}. A modern treatment of
their geometry and submanifolds is given in \cite{Chen2017}.
Twisted products extend this construction by allowing the twisting
function to depend on both factors \cite{PongeReckziegel1993},
whereas curvature conditions reducing them to warped products were
obtained in \cite{FernandezLopezEtAl2001}. Doubly warped products
constitute another important generalization \cite{Unal2001}.
Recent curvature studies on doubly and twisted products appear in
\cite{ElsharkawyEtAl2025Pseudo}, while mixed doubly sequential
warped products were examined in
\cite{ElsayiedTawfiqElsharkawy2025}.

Einstein-type solitons on product manifolds have received increasing
attention. Warped product \(\rho\)-Einstein solitons were studied in
\cite{BinTurkiEtAl2022}, and almost Ricci--Bourguignon solitons on
doubly warped products were considered in
\cite{ShenawyEtAl2023}. Sequential warped product versions were
investigated in \cite{PahanDutta2023} and
\cite{KayaOzgur2024}. Further gradient
\(\rho\)-Einstein results were obtained in
\cite{GulerUnal2025}, and doubly warped extensions were developed
in \cite{SemaryEtAl2026}. Related almost quasi-Yamabe structures on
twisted products were analyzed in
\cite{ElsharkawyEtAl2025Yamabe}. The closest ordinary counterpart
to the present work is the study of almost
Ricci--Bourguignon solitons and Einstein metrics on twisted warped
products \cite{ElsharkawyTawfiq2026}. More recently, the mixed
soliton equation was shown to impose strong separability and
rigidity restrictions on the twisting function
\cite{ElsharkawyCesaranoWanas2026}.
Lyra geometry enriches the metric structure by introducing an
additional positive scale function, following the original
construction of Lyra \cite{Lyra1951}. Its early scalar-tensor
interpretation was developed by Sen and Dunn
\cite{SenDunn1971}. A modern torsion-free and metric-compatible
scale formulation was established in
\cite{CuzinattoMoraisPimentel2021}. Recent exact models illustrate
the geometric influence of a nonconstant Lyra scale
\cite{BertinEtAl2025}, while its interpretation through local
transformations of length units was clarified in
\cite{ValadaoSobreroBergliaffa2026}.

To the best of our knowledge, almost Ricci--Bourguignon solitons
have not been systematically studied on twisted warped products
carrying a nonconstant Lyra scale. We therefore consider
\[
M=M_{1}\times_{f}M_{2},
\qquad
g=g_{1}\oplus f^{2}g_{2},
\qquad
g_{L}=\varphi^{2}g,
\]
and write
\[
k=\ln f,
\qquad
\sigma=\ln\varphi.
\]
The central question is whether the Lyra scale preserves the
ordinary separability obstruction or compensates for the mixed
derivatives of a genuinely nonseparable twisting function.

The main results are as follows. We derive the complete horizontal,
vertical, and mixed decomposition of the Lyra almost
Ricci--Bourguignon soliton equation and obtain trace and gradient
identities. We establish factor-inheritance criteria through
explicit trace-free obstruction tensors. We prove that a
base-dependent scale preserves the ordinary multiplicative
separability condition, whereas a fiber-dependent scale admits a
new weighted compensation law and genuinely twisted local models.
For separated data, we obtain the rigidity relation
\[
f_{1}\varphi_{1}=\mathrm{constant}.
\]
We also characterize Einstein Lyra metrics through conformal
potential fields, including Killing, homothetic, concurrent, and
gradient cases. Finally, exact examples and local and compact
nonexistence results demonstrate the geometric significance of the
scale--twisting interaction.

Section~\ref{sec:lyra-twisted-geometry} establishes the required
Lyra twisted product geometry.
Section~\ref{sec:lyra-arbs} develops the soliton equations.
Section~\ref{sec:factor-inheritance-rigidity} treats factor
inheritance and rigidity, while
Section~\ref{sec:einstein-special-fields} studies Einstein
reductions and special potential fields.
Section~\ref{sec:examples-nonexistence} presents examples and
nonexistence applications.

\section{Lyra Twisted Warped Product Geometry}
\label{sec:lyra-twisted-geometry}

This section fixes the geometric notation and records the identities
required in the subsequent soliton analysis. The standard theory of
warped products originates from Bishop and O'Neill
\cite{BishopONeill1969}, while its semi-Riemannian formulation is
developed in \cite{ONeill1983}. Twisted products were systematically
studied by Ponge and Reckziegel \cite{PongeReckziegel1993}. For the
Lyra structure, we adopt the torsion-free and metric-compatible
scale representation introduced in
\cite{CuzinattoMoraisPimentel2021} and developed further in
\cite{ValadaoSobreroBergliaffa2026}.

\subsection{Twisted warped product structure}
\label{subsec:twisted-product-structure}

Let $(M_{1},g_{1})$ and $(M_{2},g_{2})
$
be pseudo-Riemannian manifolds of dimensions \(n_{1}\) and
\(n_{2}\), respectively, and set
$
n=n_{1}+n_{2}.
$
Let
$
f:M_{1}\times M_{2}\longrightarrow (0,\infty)
$
be a smooth positive function. The twisted warped product
$
M=M_{1}\times_{f}M_{2}
$
is the product manifold \(M_{1}\times M_{2}\) equipped with the
metric
\begin{equation}
	g
	=
	g_{1}\oplus f^{2}g_{2}.
	\label{eq:twisted-product-metric}
\end{equation}
Throughout the paper, we use the logarithmic twisting function
\begin{equation}
	k=\ln f.
	\label{eq:logarithmic-twisting-function}
\end{equation}

The tangent bundle admits the orthogonal decomposition
\begin{equation}
	TM
	=
	\mathcal H\oplus\mathcal V,
	\label{eq:horizontal-vertical-decomposition}
\end{equation}
where
\[
\mathcal H\simeq TM_{1},
\qquad
\mathcal V\simeq TM_{2}.
\]
Vector fields
$
X,Y\in\Gamma(\mathcal H),
\qquad
U,V\in\Gamma(\mathcal V)
$
will always denote the canonical horizontal and vertical lifts,
respectively. In particular,
\[
[X,U]=0.
\]

If \(f\) depends only on \(M_{1}\), then
\eqref{eq:twisted-product-metric} reduces to the ordinary warped
product metric. If \(f\) is constant, the metric becomes the direct
product metric. Conditions under which a twisted product reduces
to a warped product were investigated in
\cite{FernandezLopezEtAl2001}.

For a smooth function \(h\in C^{\infty}(M)\), we denote by
$\{\nabla_{1}h,\nabla_{2}h,\operatorname{Hess}_{1}h, \operatorname{Hess}_{2}h, \Delta_{1}h,\Delta_{2}h\}$
the partial gradients, Hessians, and Laplacians computed on the
corresponding factor, with the other variable regarded as a
parameter. The gradient of \(h\) with respect to \(g\) is
\begin{equation}
	\nabla h
	=
	\nabla_{1}h
	+
	f^{-2}\nabla_{2}h,
	\label{eq:twisted-gradient}
\end{equation}
and consequently
\begin{equation}
	\lvert\nabla h\rvert_{g}^{2}
	=
	\lvert\nabla_{1}h\rvert_{g_{1}}^{2}
	+
	f^{-2}\lvert\nabla_{2}h\rvert_{g_{2}}^{2}.
	\label{eq:twisted-gradient-norm}
\end{equation}

Let \(\nabla\), \(\nabla^{1}\), and \(\nabla^{2}\) denote the
Levi-Civita connections of \(g\), \(g_{1}\), and \(g_{2}\),
respectively. The standard connection identities on a twisted
warped product are
\begin{subequations}
	\label{eq:twisted-connection}
	\begin{align}
		\nabla_{X}Y
		&=
		\nabla^{1}_{X}Y,
		\label{eq:twisted-connection-horizontal}
		\\
		\nabla_{X}U
		=
		\nabla_{U}X
		&=
		X(k)U,
		\label{eq:twisted-connection-mixed}
		\\
		\nabla_{U}V
		&=
		\nabla^{2}_{U}V
		+
		U(k)V
		+
		V(k)U
		-
		g_{2}(U,V)\nabla_{2}k
		-
		f^{2}g_{2}(U,V)\nabla_{1}k.
		\label{eq:twisted-connection-vertical}
	\end{align}
\end{subequations}

Using \eqref{eq:twisted-connection}, the Hessian of
\(h\in C^{\infty}(M)\) satisfies
\begin{subequations}
	\label{eq:twisted-hessian}
	\begin{align}
		\operatorname{Hess}h(X,Y)
		&=
		\operatorname{Hess}_{1}h(X,Y),
		\label{eq:twisted-hessian-horizontal}
		\\
		\operatorname{Hess}h(X,U)
		&=
		XU(h)-X(k)U(h),
		\label{eq:twisted-hessian-mixed}
		\\
		\operatorname{Hess}h(U,V)
		&=
		\operatorname{Hess}_{2}h(U,V)
		-
		U(k)V(h)
		-
		V(k)U(h)
		\nonumber\\
		&\quad
		+
		g_{2}(U,V)
		\left\langle\nabla_{2}k,\nabla_{2}h\right\rangle_{g_{2}}
		\nonumber\\
		&\quad
		+
		f^{2}g_{2}(U,V)
		\left\langle\nabla_{1}k,\nabla_{1}h\right\rangle_{g_{1}}.
		\label{eq:twisted-hessian-vertical}
	\end{align}
\end{subequations}

Taking the trace of \eqref{eq:twisted-hessian} gives
\begin{equation}
	\Delta h
	=
	\Delta_{1}h
	+
	n_{2}
	\left\langle\nabla_{1}k,\nabla_{1}h\right\rangle_{g_{1}}
	+
	f^{-2}
	\left[
	\Delta_{2}h
	+
	(n_{2}-2)
	\left\langle\nabla_{2}k,\nabla_{2}h\right\rangle_{g_{2}}
	\right].
	\label{eq:twisted-laplacian}
\end{equation}

\subsection{Lyra scale structure}
\label{subsec:lyra-scale-structure}

Following the scale-metric formulation of Lyra geometry, let
\[
\varphi:M\longrightarrow(0,\infty)
\]
be a smooth positive scale function and introduce
\begin{equation}
	\sigma=\ln\varphi.
	\label{eq:logarithmic-lyra-scale}
\end{equation}
The associated Lyra metric is defined by
\begin{equation}
	g_{L}
	=
	\varphi^{2}g
	=
	e^{2\sigma}
	\left(
	g_{1}\oplus f^{2}g_{2}
	\right).
	\label{eq:lyra-twisted-metric}
\end{equation}

The triple
\begin{equation}
	\left(
	M_{1}\times_{f}M_{2},
	g,
	\varphi
	\right)
	\label{eq:lyra-twisted-product}
\end{equation}
will be called a \emph{Lyra twisted warped product}, and the
geometry determined by \(g_{L}\) will be referred to as its Lyra
geometry. When \(\varphi\) is constant, the Lyra structure reduces,
up to a constant homothety, to the ordinary twisted warped product.

Let \(\nabla^{L}\) denote the Levi-Civita connection of \(g_{L}\).
The standard conformal connection identity gives
\begin{equation}
	\nabla^{L}_{A}B
	=
	\nabla_{A}B
	+
	A(\sigma)B
	+
	B(\sigma)A
	-
	g(A,B)\nabla\sigma,
	\label{eq:lyra-connection-general}
\end{equation}
for arbitrary \(A,B\in\Gamma(TM)\).

For later use, define the combined scale-twisting function
\begin{equation}
	\omega
	=
	k+\sigma
	=
	\ln(f\varphi).
	\label{eq:combined-scale-twisting-function}
\end{equation}
Using \eqref{eq:twisted-connection} in
\eqref{eq:lyra-connection-general}, one obtains
\begin{subequations}
	\label{eq:lyra-connection-components}
	\begin{align}
		\nabla^{L}_{X}Y
		&=
		\nabla^{1}_{X}Y
		+
		X(\sigma)Y
		+
		Y(\sigma)X
		-
		g_{1}(X,Y)
		\left(
		\nabla_{1}\sigma
		+
		f^{-2}\nabla_{2}\sigma
		\right),
		\label{eq:lyra-connection-horizontal}
		\\
		\nabla^{L}_{X}U
		=
		\nabla^{L}_{U}X
		&=
		X(\omega)U
		+
		U(\sigma)X,
		\label{eq:lyra-connection-mixed}
		\\
		\nabla^{L}_{U}V
		&=
		\nabla^{2}_{U}V
		+
		U(\omega)V
		+
		V(\omega)U
		\nonumber\\
		&\quad
		-
		g_{2}(U,V)\nabla_{2}\omega
		-
		f^{2}g_{2}(U,V)\nabla_{1}\omega.
		\label{eq:lyra-connection-vertical}
	\end{align}
\end{subequations}

Equations \eqref{eq:lyra-connection-horizontal}--
\eqref{eq:lyra-connection-vertical} show that both \(f\) and
\(\varphi\) influence the Lyra connection. In particular, their
vertical contributions occur through the combined function
\(\omega=\ln(f\varphi)\), whereas the horizontal connection also
contains the independent variation of the Lyra scale.

The volume elements of \(g\) and \(g_{L}\) are related by
\begin{equation}
	d\mu_{g}
	=
	f^{n_{2}}\,
	d\mu_{g_{1}}\,
	d\mu_{g_{2}},
	\qquad
	d\mu_{L}
	=
	\varphi^{n}f^{n_{2}}\,
	d\mu_{g_{1}}\,
	d\mu_{g_{2}}.
	\label{eq:lyra-volume-element}
\end{equation}

\subsection{Curvature identities}
\label{subsec:curvature-identities}

Let
\[
\operatorname{Ric},
\qquad
R
\]
denote the Ricci tensor and scalar curvature of \(g\). The Ricci
tensor of the twisted warped product
\eqref{eq:twisted-product-metric} is determined by
\begin{subequations}
	\label{eq:twisted-ricci-components}
	\begin{align}
		\operatorname{Ric}(X,Y)
		&=
		\operatorname{Ric}_{1}(X,Y)
		-
		n_{2}
		\left[
		\operatorname{Hess}_{1}k(X,Y)
		+
		X(k)Y(k)
		\right],
		\label{eq:twisted-ricci-horizontal}
		\\
		\operatorname{Ric}(X,U)
		&=
		-
		(n_{2}-1)XU(k),
		\label{eq:twisted-ricci-mixed}
		\\
		\operatorname{Ric}(U,V)
		&=
		\operatorname{Ric}_{2}(U,V)
		-
		(n_{2}-2)
		\left[
		\operatorname{Hess}_{2}k(U,V)
		-
		U(k)V(k)
		\right]
		\nonumber\\
		&\quad
		-
		\left[
		\Delta_{2}k
		+
		(n_{2}-2)
		\lvert\nabla_{2}k\rvert_{g_{2}}^{2}
		\right]g_{2}(U,V)
		\nonumber\\
		&\quad
		-
		f^{2}
		\left[
		\Delta_{1}k
		+
		n_{2}
		\lvert\nabla_{1}k\rvert_{g_{1}}^{2}
		\right]g_{2}(U,V).
		\label{eq:twisted-ricci-vertical}
	\end{align}
\end{subequations}

The corresponding scalar curvature is
\begin{align}
	R
	={}&
	R_{1}
	+
	f^{-2}R_{2}
	-
	2n_{2}\Delta_{1}k
	-
	n_{2}(n_{2}+1)
	\lvert\nabla_{1}k\rvert_{g_{1}}^{2}
	\nonumber\\
	&-
	f^{-2}
	\left[
	2(n_{2}-1)\Delta_{2}k
	+
	(n_{2}-1)(n_{2}-2)
	\lvert\nabla_{2}k\rvert_{g_{2}}^{2}
	\right].
	\label{eq:twisted-scalar-curvature}
\end{align}

Let
\[
\operatorname{Ric}^{L},
\qquad
R^{L}
\]
denote the Ricci tensor and scalar curvature of the Lyra metric
\(g_{L}\). The conformal transformation formulas
\cite{Besse2008} yield
\begin{align}
	\operatorname{Ric}^{L}
	={}&
	\operatorname{Ric}
	-
	(n-2)
	\left(
	\operatorname{Hess}\sigma
	-
	d\sigma\otimes d\sigma
	\right)
	\nonumber\\
	&-
	\left[
	\Delta\sigma
	+
	(n-2)\lvert\nabla\sigma\rvert_{g}^{2}
	\right]g,
	\label{eq:lyra-ricci-transformation}
\end{align}
and
\begin{equation}
	R^{L}
	=
	e^{-2\sigma}
	\left[
	R
	-
	2(n-1)\Delta\sigma
	-
	(n-1)(n-2)
	\lvert\nabla\sigma\rvert_{g}^{2}
	\right].
	\label{eq:lyra-scalar-transformation}
\end{equation}
Here \(R\), \(\Delta\sigma\), and
\(\lvert\nabla\sigma\rvert_{g}^{2}\) are given by
\eqref{eq:twisted-scalar-curvature},
\eqref{eq:twisted-laplacian}, and
\eqref{eq:twisted-gradient-norm}, respectively.

In particular, using \eqref{eq:twisted-hessian-mixed} and
\eqref{eq:twisted-ricci-mixed} in
\eqref{eq:lyra-ricci-transformation}, the mixed Lyra Ricci
component is
\begin{align}
	\operatorname{Ric}^{L}(X,U)
	={}&
	-
	(n_{2}-1)XU(k)
	\nonumber\\
	&-
	(n-2)
	\left[
	XU(\sigma)
	-
	X(k)U(\sigma)
	-
	X(\sigma)U(\sigma)
	\right].
	\label{eq:mixed-lyra-ricci}
\end{align}
Equation \eqref{eq:mixed-lyra-ricci} will play a central role in
the scale-twisting rigidity analysis.

\subsection{Lyra differential identities}
\label{subsec:lyra-differential-identities}

Let \(W\in\Gamma(TM)\). Since \(g_{L}=e^{2\sigma}g\), the Lie
derivative satisfies
\begin{equation}
	\mathcal L_{W}g_{L}
	=
	e^{2\sigma}
	\left[
	\mathcal L_{W}g
	+
	2W(\sigma)g
	\right].
	\label{eq:lyra-lie-derivative}
\end{equation}

For any smooth function \(u\in C^{\infty}(M)\), its Lyra gradient
is
\begin{equation}
	\nabla^{L}u
	=
	e^{-2\sigma}\nabla u.
	\label{eq:lyra-gradient-function}
\end{equation}
The Lyra Hessian is
\begin{align}
	\operatorname{Hess}^{L}u
	={}&
	\operatorname{Hess}u
	-
	du\otimes d\sigma
	-
	d\sigma\otimes du
	\nonumber\\
	&+
	\left\langle\nabla u,\nabla\sigma\right\rangle_{g}g,
	\label{eq:lyra-hessian-function}
\end{align}
where the components of \(\operatorname{Hess}u\) are given in
\eqref{eq:twisted-hessian}. Taking the Lyra trace gives
\begin{equation}
	\Delta^{L}u
	=
	e^{-2\sigma}
	\left[
	\Delta u
	+
	(n-2)
	\left\langle\nabla\sigma,\nabla u\right\rangle_{g}
	\right].
	\label{eq:lyra-laplacian-function}
\end{equation}

The identities collected in
\eqref{eq:twisted-connection}--
\eqref{eq:lyra-laplacian-function} will be used throughout the
paper. In particular, the horizontal, vertical, and mixed
components of the Lyra almost Ricci--Bourguignon soliton equation
will be obtained directly from
\eqref{eq:twisted-ricci-components},
\eqref{eq:lyra-ricci-transformation}, and
\eqref{eq:lyra-lie-derivative}, without repeating the preceding
geometric computations.

\section{Lyra Almost Ricci--Bourguignon Solitons}
\label{sec:lyra-arbs}

Almost Ricci--Bourguignon solitons generalize Ricci solitons by
including the scalar-curvature contribution associated with the
Ricci--Bourguignon flow; see
\cite{CatinoEtAl2017,Dwivedi2021}. Their behavior on warped,
doubly warped, sequential warped, and twisted warped product
manifolds has been investigated in
\cite{BinTurkiEtAl2022,ShenawyEtAl2023,PahanDutta2023,
	KayaOzgur2024,ElsharkawyTawfiq2026}. In this section, we define
the corresponding structure for the Lyra twisted warped product
defined in Section~\ref{sec:lyra-twisted-geometry} and derive its
complete horizontal, vertical, and mixed characterization.

Throughout this section, let
\[
M=M_{1}\times_{f}M_{2},
\qquad
g=g_{1}\oplus f^{2}g_{2},
\qquad
g_{L}=e^{2\sigma}g,
\]
where
\[
k=\ln f,
\qquad
\sigma=\ln\varphi,
\qquad
\omega=k+\sigma.
\]
We write
\[
n_{1}=\dim M_{1},
\qquad
n_{2}=\dim M_{2},
\qquad
n=n_{1}+n_{2}.
\]

\subsection{Definition and elementary reductions}
\label{subsec:lyra-arbs-definition}

\begin{definition}
	\label{def:lyra-arbs}
	A quadruple
	\[
	(g_{L},Z,\lambda,\rho)
	\]
	on a Lyra twisted warped product \(M\) is called a
	\emph{Lyra almost Ricci--Bourguignon soliton} if
	\begin{equation}
		\operatorname{Ric}^{L}
		+
		\frac{1}{2}\mathcal L_{Z}g_{L}
		=
		\left(
		\lambda+\rho R^{L}
		\right)g_{L},
		\label{eq:lyra-arbs-equation}
	\end{equation}
	where
	\[
	Z\in\Gamma(TM),
	\qquad
	\lambda\in C^{\infty}(M),
	\qquad
	\rho\in\mathbb R.
	\]
	The vector field \(Z\) is called the \emph{potential vector
		field}, while \(\lambda\) is called the \emph{soliton function}.
\end{definition}

For notational convenience, set
\begin{equation}
	\Lambda
	=
	\lambda+\rho R^{L}.
	\label{eq:lyra-arbs-lambda}
\end{equation}
Accordingly, Equation~\eqref{eq:lyra-arbs-equation} becomes
\begin{equation}
	\operatorname{Ric}^{L}
	+
	\frac{1}{2}\mathcal L_{Z}g_{L}
	=
	\Lambda g_{L}.
	\label{eq:lyra-arbs-compact}
\end{equation}

When \(\lambda\) is constant, the structure is called a
\emph{Lyra Ricci--Bourguignon soliton}. When \(\rho=0\),
Equation~\eqref{eq:lyra-arbs-equation} reduces to a Lyra almost
Ricci soliton. If \(\sigma=0\), then \(g_{L}=g\), and
\eqref{eq:lyra-arbs-equation} reduces to the ordinary almost
Ricci--Bourguignon soliton equation on a twisted warped product.

When the soliton function has a definite sign, the soliton is
called shrinking, steady, or expanding according as
\[
\lambda>0,
\qquad
\lambda=0,
\qquad
\lambda<0,
\]
respectively.

\subsection{Splitting of the Lyra Lie derivative}
\label{subsec:lyra-lie-splitting}

For the factorwise characterization, we assume that the potential
field is projectable and admits the decomposition
\begin{equation}
	Z
	=
	Z_{1}+Z_{2},
	\label{eq:projectable-potential}
\end{equation}
where \(Z_{1}\) and \(Z_{2}\) are the canonical lifts of vector
fields on \(M_{1}\) and \(M_{2}\), respectively.

\begin{proposition}
	\label{prop:lyra-lie-splitting}
	Let \(Z=Z_{1}+Z_{2}\) be as in
	\eqref{eq:projectable-potential}. For
	\[
	X,Y\in\Gamma(\mathcal H),
	\qquad
	U,V\in\Gamma(\mathcal V),
	\]
	the Lie derivative of the Lyra metric satisfies
	\begin{subequations}
		\label{eq:lyra-lie-components}
		\begin{align}
			(\mathcal L_{Z}g_{L})(X,Y)
			&=
			e^{2\sigma}
			\left[
			(\mathcal L_{Z_{1}}g_{1})(X,Y)
			+
			2Z(\sigma)g_{1}(X,Y)
			\right],
			\label{eq:lyra-lie-horizontal}
			\\
			(\mathcal L_{Z}g_{L})(X,U)
			&=
			0,
			\label{eq:lyra-lie-mixed}
			\\
			(\mathcal L_{Z}g_{L})(U,V)
			&=
			e^{2\sigma}f^{2}
			\left[
			(\mathcal L_{Z_{2}}g_{2})(U,V)
			+
			2Z(\omega)g_{2}(U,V)
			\right].
			\label{eq:lyra-lie-vertical}
		\end{align}
	\end{subequations}
\end{proposition}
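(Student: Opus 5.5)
The plan is to start from the conformal identity \eqref{eq:lyra-lie-derivative},
\[
\mathcal L_{Z}g_{L}=e^{2\sigma}\left[\mathcal L_{Z}g+2Z(\sigma)g\right],
\]
so that everything reduces to computing the three block components of \(\mathcal L_{Z}g\) for the twisted metric \(g=g_{1}\oplus f^{2}g_{2}\). Adding \(2Z(\sigma)g\) is then immediate, since \(g(X,Y)=g_{1}(X,Y)\), \(g(X,U)=0\) and \(g(U,V)=f^{2}g_{2}(U,V)\).

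To compute \(\mathcal L_{Z}g\) I will use the bracket formula
\[
(\mathcal L_{Z}g)(A,B)=Z\big(g(A,B)\big)-g([Z,A],B)-g(A,[Z,B])
\]
rather than the connection identities \eqref{eq:twisted-connection}, because the lifts behave well under brackets. For projectable \(Z=Z_{1}+Z_{2}\) and canonical lifts \(X,Y,U,V\), the standard facts are:
\begin{itemize}
\item \([Z_{1},X]\) is the horizontal lift of the bracket on \(M_{1}\);
\item \([Z_{2},U]\) is the vertical lift of the bracket on \(M_{2}\);
\item \([Z_{2},X]=0\) and \([Z_{1},U]=0\), by the same reasoning that gives \([X,U]=0\).
\end{itemize}
In addition, a function lifted from \(M_{1}\), such as \(g_{1}(X,Y)\), is annihilated by \(Z_{2}\). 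Likewise, \(g_{2}(U,V)\) is annihilated by \(Z_{1}\).

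\textbf{Horizontal block.} We have \(Z(g_{1}(X,Y))=Z_{1}(g_{1}(X,Y))\), and the bracket terms only involve \([Z_{1},X]\) and \([Z_{1},Y]\). Hence \((\mathcal L_{Z}g)(X,Y)=(\mathcal L_{Z_{1}}g_{1})(X,Y)\), which gives \eqref{eq:lyra-lie-horizontal}.

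\textbf{Mixed block.} Here \(g(X,U)=0\). The bracket \([Z,X]\) is horizontal and \([Z,U]\) is vertical, so both bracket terms vanish by orthogonality of \eqref{eq:horizontal-vertical-decomposition}. This gives \eqref{eq:lyra-lie-mixed}, because the extra term \(2Z(\sigma)g(X,U)\) also vanishes.

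\textbf{Vertical block.} This is the only place where the twisting enters. Since \(f\) depends on both factors, \(Z(f^{2})\) is not zero in general, and the Leibniz rule gives
\[
Z\big(f^{2}g_{2}(U,V)\big)=2f^{2}Z(k)\,g_{2}(U,V)+f^{2}Z_{2}\big(g_{2}(U,V)\big).
\]
The bracket terms supply \(-f^{2}g_{2}([Z_{2},U],V)-f^{2}g_{2}(U,[Z_{2},V])\). Together these give \((\mathcal L_{Z}g)(U,V)=f^{2}\left[(\mathcal L_{Z_{2}}g_{2})(U,V)+2Z(k)g_{2}(U,V)\right]\). Adding \(2Z(\sigma)f^{2}g_{2}(U,V)\) and using \(\omega=k+\sigma\) from \eqref{eq:combined-scale-twisting-function} yields \eqref{eq:lyra-lie-vertical}.

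The main point needing care is justifying the bracket facts for lifts, namely that \([Z_{2},X]=0\) and that \([Z_{1},X]\) is again a horizontal lift. I would verify these in product coordinates \((x^{i},y^{a})\): the components of \(Z_{1}\) and \(X\) depend only on \(x\), and those of \(Z_{2}\) and \(U\) depend only on \(y\). A secondary point is to keep the full derivative \(Z(k)\) in the vertical block, with no term dropped, since \(k\) depends on both factors.
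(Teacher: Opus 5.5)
Your proposal is correct and takes the same route as the paper: reduce via the conformal identity \eqref{eq:lyra-lie-derivative} to the block components of $\mathcal L_{Z}g$, then add $2Z(\sigma)g$ and use $Z(\omega)=Z(k)+Z(\sigma)$. Your bracket-formula computation spells out the three component identities that the paper asserts with only a brief appeal to projectability and orthogonality.
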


\begin{proof}
	Using Equation~\eqref{eq:lyra-lie-derivative}, we have
	\[
	\mathcal L_{Z}g_{L}
	=
	e^{2\sigma}
	\left(
	\mathcal L_{Z}g
	+
	2Z(\sigma)g
	\right).
	\]
	Since \(Z_{1}\) and \(Z_{2}\) are projectable and
	\(g=g_{1}\oplus f^{2}g_{2}\), the horizontal component is
	\[
	(\mathcal L_{Z}g)(X,Y)
	=
	(\mathcal L_{Z_{1}}g_{1})(X,Y).
	\]
	The orthogonality of the horizontal and vertical distributions,
	together with the projectability of the fields, gives
	\[
	(\mathcal L_{Z}g)(X,U)=0.
	\]
	For vertical fields,
	\[
	(\mathcal L_{Z}g)(U,V)
	=
	f^{2}
	\left[
	(\mathcal L_{Z_{2}}g_{2})(U,V)
	+
	2Z(k)g_{2}(U,V)
	\right].
	\]
	Combining these identities with
	\(Z(\omega)=Z(k)+Z(\sigma)\) proves
	\eqref{eq:lyra-lie-components}.
\end{proof}

\subsection{Component characterization}
\label{subsec:lyra-arbs-components}

Define
\begin{equation}
	\Theta_{\sigma}
	=
	\Delta\sigma
	+
	(n-2)
	\lvert\nabla\sigma\rvert_{g}^{2}.
	\label{eq:theta-sigma}
\end{equation}
The quantities in \eqref{eq:theta-sigma} are explicitly determined
by Equations~\eqref{eq:twisted-laplacian} and
\eqref{eq:twisted-gradient-norm}.

\begin{theorem}[Horizontal, vertical, and mixed soliton system]
	\label{thm:lyra-arbs-component-characterization}
	Let
	\[
	(M,g_{L},Z,\lambda,\rho)
	\]
	be a Lyra twisted warped product equipped with the projectable
	field \(Z=Z_{1}+Z_{2}\). Then
	\eqref{eq:lyra-arbs-equation} holds if and only if the following
	three systems are satisfied.
	
	\medskip
	\noindent
	\textnormal{\textbf{(i) Horizontal equation.}}
	For all \(X,Y\in\Gamma(\mathcal H)\),
	\begin{align}
		&\operatorname{Ric}_{1}(X,Y)
		-
		n_{2}
		\left[
		\operatorname{Hess}_{1}k(X,Y)
		+
		X(k)Y(k)
		\right]
		\nonumber\\
		&\quad
		-
		(n-2)
		\left[
		\operatorname{Hess}_{1}\sigma(X,Y)
		-
		X(\sigma)Y(\sigma)
		\right]
		+
		\frac{e^{2\sigma}}{2}
		(\mathcal L_{Z_{1}}g_{1})(X,Y)
		\nonumber\\
		&=
		\left[
		e^{2\sigma}
		\bigl(
		\Lambda-Z(\sigma)
		\bigr)
		+
		\Theta_{\sigma}
		\right]
		g_{1}(X,Y).
		\label{eq:lyra-arbs-horizontal}
	\end{align}
	
	\medskip
	\noindent
	\textnormal{\textbf{(ii) Vertical equation.}}
	For all \(U,V\in\Gamma(\mathcal V)\),
	\begin{align}
		&\operatorname{Ric}(U,V)
		-
		(n-2)
		\left[
		\operatorname{Hess}\sigma(U,V)
		-
		U(\sigma)V(\sigma)
		\right]
		\nonumber\\
		&\quad
		+
		\frac{e^{2\sigma}f^{2}}{2}
		(\mathcal L_{Z_{2}}g_{2})(U,V)
		\nonumber\\
		&=
		f^{2}
		\left[
		e^{2\sigma}
		\bigl(
		\Lambda-Z(\omega)
		\bigr)
		+
		\Theta_{\sigma}
		\right]
		g_{2}(U,V).
		\label{eq:lyra-arbs-vertical}
	\end{align}
	Here \(\operatorname{Ric}(U,V)\) and
	\(\operatorname{Hess}\sigma(U,V)\) are given by
	\eqref{eq:twisted-ricci-vertical} and
	\eqref{eq:twisted-hessian-vertical}, respectively.
	
	\medskip
	\noindent
	\textnormal{\textbf{(iii) Mixed equation.}}
	For all
	\(X\in\Gamma(\mathcal H)\) and
	\(U\in\Gamma(\mathcal V)\),
	\begin{align}
		&(n_{2}-1)XU(k)
		\nonumber\\
		&\quad
		+
		(n-2)
		\left[
		XU(\sigma)
		-
		X(k)U(\sigma)
		-
		X(\sigma)U(\sigma)
		\right]
		=
		0.
		\label{eq:lyra-arbs-mixed}
	\end{align}
\end{theorem}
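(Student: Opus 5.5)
The plan is to evaluate the compact equation \eqref{eq:lyra-arbs-compact} on the three block types of pairs $(X,Y)$, $(X,U)$, $(U,V)$. Since $TM=\mathcal H\oplus\mathcal V$ by \eqref{eq:horizontal-vertical-decomposition}, and both sides are symmetric $(0,2)$-tensors, a symmetric tensor identity holds if and only if it holds on these three blocks. This gives the ``if and only if'' immediately, so the work reduces to identifying each block with (i), (ii), (iii).

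For each block I will insert the conformal formula \eqref{eq:lyra-ricci-transformation} for $\operatorname{Ric}^{L}$. I will then insert the Lie derivative splitting of Proposition~\ref{prop:lyra-lie-splitting} and write $g_{L}=e^{2\sigma}g$, with $g(X,Y)=g_{1}(X,Y)$, $g(X,U)=0$ and $g(U,V)=f^{2}g_{2}(U,V)$. The term $-\Theta_{\sigma}g$ in \eqref{eq:lyra-ricci-transformation} is moved to the right-hand side, which accounts for the $+\Theta_{\sigma}$ in (i) and (ii).

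\textbf{Horizontal block.} Use \eqref{eq:twisted-ricci-horizontal} for $\operatorname{Ric}(X,Y)$ and \eqref{eq:twisted-hessian-horizontal} for $\operatorname{Hess}\sigma(X,Y)=\operatorname{Hess}_{1}\sigma(X,Y)$. By \eqref{eq:lyra-lie-horizontal}, $\tfrac12(\mathcal L_{Z}g_{L})(X,Y)$ equals $\tfrac{e^{2\sigma}}{2}(\mathcal L_{Z_{1}}g_{1})(X,Y)+e^{2\sigma}Z(\sigma)g_{1}(X,Y)$. Moving the second term to the right produces the coefficient $e^{2\sigma}(\Lambda-Z(\sigma))+\Theta_{\sigma}$, which is exactly (i).

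\textbf{Vertical block.} Here \eqref{eq:lyra-lie-vertical} contributes $e^{2\sigma}f^{2}Z(\omega)g_{2}$. Transposing it gives $f^{2}[e^{2\sigma}(\Lambda-Z(\omega))+\Theta_{\sigma}]g_{2}$, and the remaining $\operatorname{Ric}(U,V)$ and $\operatorname{Hess}\sigma(U,V)$ are left in the form referenced by the statement. This gives (ii).

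\textbf{Mixed block.} The right-hand side vanishes, and $(\mathcal L_{Z}g_{L})(X,U)=0$ by \eqref{eq:lyra-lie-mixed}. The equation therefore reduces to $\operatorname{Ric}^{L}(X,U)=0$. Using \eqref{eq:mixed-lyra-ricci} and multiplying by $-1$ gives (iii).

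The step needing most care is the mixed one. One must check that \eqref{eq:mixed-lyra-ricci} is consistent with \eqref{eq:twisted-hessian-mixed}, \eqref{eq:twisted-ricci-mixed} and the $-(n-2)(\operatorname{Hess}\sigma-d\sigma\otimes d\sigma)$ term. The Hessian contributes $XU(\sigma)-X(k)U(\sigma)$ and $d\sigma\otimes d\sigma$ contributes $-X(\sigma)U(\sigma)$, with no $g$-term since $g(X,U)=0$. I will take \eqref{eq:mixed-lyra-ricci} as already established. A secondary point is that \eqref{eq:lyra-lie-mixed} relies on projectability of $Z$, which is an assumption of the theorem. With these checks the three blocks are equivalent to (i)–(iii), and the proof is complete.
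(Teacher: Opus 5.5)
Your proposal is correct and follows the paper's own proof essentially step for step. Like the paper, you evaluate \eqref{eq:lyra-arbs-compact} on the horizontal, vertical, and mixed blocks using \eqref{eq:lyra-ricci-transformation} and Proposition~\ref{prop:lyra-lie-splitting}, reduce the mixed block to \(\operatorname{Ric}^{L}(X,U)=0\) via \eqref{eq:mixed-lyra-ricci}, and get the converse from the horizontal--vertical decomposition of \(TM\); your consistency check of \eqref{eq:mixed-lyra-ricci} against \eqref{eq:twisted-hessian-mixed} and \eqref{eq:twisted-ricci-mixed} is also correct.
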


\begin{proof}
	Evaluate Equation~\eqref{eq:lyra-arbs-compact} on horizontal
	fields \(X\) and \(Y\). Using
	\eqref{eq:twisted-ricci-horizontal} in
	\eqref{eq:lyra-ricci-transformation}, we obtain
	\begin{align*}
		\operatorname{Ric}^{L}(X,Y)
		={}&
		\operatorname{Ric}_{1}(X,Y)
		-
		n_{2}
		\left[
		\operatorname{Hess}_{1}k(X,Y)
		+
		X(k)Y(k)
		\right]
		\\
		&-
		(n-2)
		\left[
		\operatorname{Hess}_{1}\sigma(X,Y)
		-
		X(\sigma)Y(\sigma)
		\right]
		-
		\Theta_{\sigma}g_{1}(X,Y).
	\end{align*}
	By Equation~\eqref{eq:lyra-lie-horizontal},
	\[
	\frac{1}{2}
	(\mathcal L_{Z}g_{L})(X,Y)
	=
	\frac{e^{2\sigma}}{2}
	(\mathcal L_{Z_{1}}g_{1})(X,Y)
	+
	e^{2\sigma}Z(\sigma)g_{1}(X,Y).
	\]
	Since
	\[
	g_{L}(X,Y)
	=
	e^{2\sigma}g_{1}(X,Y),
	\]
	substitution into \eqref{eq:lyra-arbs-compact} gives
	\eqref{eq:lyra-arbs-horizontal}.
	
	For vertical fields, Equation~\eqref{eq:lyra-ricci-transformation}
	gives
	\begin{align*}
		\operatorname{Ric}^{L}(U,V)
		={}&
		\operatorname{Ric}(U,V)
		-
		(n-2)
		\left[
		\operatorname{Hess}\sigma(U,V)
		-
		U(\sigma)V(\sigma)
		\right]
		\\
		&-
		f^{2}\Theta_{\sigma}g_{2}(U,V).
	\end{align*}
	Using \eqref{eq:lyra-lie-vertical} and
	\[
	g_{L}(U,V)
	=
	e^{2\sigma}f^{2}g_{2}(U,V),
	\]
	Equation~\eqref{eq:lyra-arbs-compact} becomes precisely
	\eqref{eq:lyra-arbs-vertical}.
	
	Finally, since
	\[
	g_{L}(X,U)=0
	\]
	and the mixed Lie derivative vanishes by
	\eqref{eq:lyra-lie-mixed}, the mixed part of the soliton equation
	is equivalent to
	\[
	\operatorname{Ric}^{L}(X,U)=0.
	\]
	Using Equation~\eqref{eq:mixed-lyra-ricci} yields
	\eqref{eq:lyra-arbs-mixed}.
	
	Conversely, every pair of tangent vectors admits a unique
	horizontal--vertical decomposition. Therefore,
	\eqref{eq:lyra-arbs-horizontal},
	\eqref{eq:lyra-arbs-vertical}, and
	\eqref{eq:lyra-arbs-mixed} together imply
	\eqref{eq:lyra-arbs-compact} on all of \(TM\).
\end{proof}

\begin{remark}
	\label{rem:mixed-scale-twisting}
	Equation~\eqref{eq:lyra-arbs-mixed} is independent of the
	soliton function \(\lambda\), the parameter \(\rho\), and the
	projectable potential field \(Z\). It is therefore a purely
	geometric compatibility condition between the twisting function
	\(f\) and the Lyra scale \(\varphi\).
	
	If \(\sigma=0\), then
	\eqref{eq:lyra-arbs-mixed} reduces to
	\[
	(n_{2}-1)XU(k)=0.
	\]
	Thus, when \(n_{2}>1\), the ordinary twisted-product obstruction
	\[
	XU(k)=0
	\]
	is recovered. The additional \(\sigma\)-terms in
	\eqref{eq:lyra-arbs-mixed} are the source of the
	scale--twisting compensation studied in
	Section~\ref{sec:factor-inheritance-rigidity}.
\end{remark}

\subsection{Trace and divergence identities}
\label{subsec:lyra-arbs-trace}

\begin{proposition}[Trace identity]
	\label{prop:lyra-arbs-trace}
	Every Lyra almost Ricci--Bourguignon soliton satisfies
	\begin{equation}
		(1-n\rho)R^{L}
		+
		\operatorname{div}_{L}Z
		=
		n\lambda.
		\label{eq:lyra-arbs-trace}
	\end{equation}
	If \(Z=Z_{1}+Z_{2}\) is projectable, then
	\begin{align}
		\operatorname{div}_{L}Z
		={}&
		\operatorname{div}_{1}Z_{1}
		+
		\operatorname{div}_{2}Z_{2}
		+
		n_{2}Z(k)
		+
		nZ(\sigma).
		\label{eq:lyra-projectable-divergence}
	\end{align}
	Hence,
	\begin{align}
		(1-n\rho)R^{L}
		&+
		\operatorname{div}_{1}Z_{1}
		+
		\operatorname{div}_{2}Z_{2}
		\nonumber\\
		&+
		n_{2}Z(k)
		+
		nZ(\sigma)
		=
		n\lambda.
		\label{eq:lyra-arbs-projectable-trace}
	\end{align}
\end{proposition}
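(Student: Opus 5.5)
The plan is to take the $g_{L}$-trace of the compact form \eqref{eq:lyra-arbs-compact} of Definition~\ref{def:lyra-arbs} and then evaluate the divergence term with the splitting of Proposition~\ref{prop:lyra-lie-splitting}.

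\textbf{Step 1: the trace identity.} Contracting \eqref{eq:lyra-arbs-compact} with $g_{L}^{-1}$ gives three terms:
\[
\operatorname{tr}_{g_{L}}\operatorname{Ric}^{L}=R^{L},\qquad
\operatorname{tr}_{g_{L}}\Lambda g_{L}=n\Lambda=n\lambda+n\rho R^{L},
\]
and $\tfrac12\operatorname{tr}_{g_{L}}\mathcal L_{Z}g_{L}=\operatorname{div}_{L}Z$. The last equality is the standard identity $\tfrac12\operatorname{tr}_{h}\mathcal L_{Z}h=\operatorname{div}_{h}Z$, valid for any pseudo-Riemannian metric $h$ (here $h=g_{L}$). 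Moving $n\rho R^{L}$ to the left-hand side yields \eqref{eq:lyra-arbs-trace}. This step uses no product structure.

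\textbf{Step 2: the projectable divergence formula.} For \eqref{eq:lyra-projectable-divergence} I will compute $\operatorname{div}_{L}Z$ as the same trace, now using the components \eqref{eq:lyra-lie-components}. Take a $g_{1}$-orthonormal frame $\{X_{i}\}$ and a $g_{2}$-orthonormal frame $\{U_{a}\}$, with signs $\varepsilon_{i},\varepsilon_{a}$. Then $\{e^{-\sigma}X_{i}\}\cup\{e^{-\sigma}f^{-1}U_{a}\}$ is $g_{L}$-orthonormal.

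The mixed components vanish by \eqref{eq:lyra-lie-mixed}, so only two contributions remain.
\begin{itemize}
\item The horizontal part of \eqref{eq:lyra-lie-horizontal} gives $\tfrac12\sum_{i}\varepsilon_{i}\bigl[(\mathcal L_{Z_{1}}g_{1})(X_{i},X_{i})+2Z(\sigma)\varepsilon_{i}\bigr]=\operatorname{div}_{1}Z_{1}+n_{1}Z(\sigma)$.
\item The vertical part of \eqref{eq:lyra-lie-vertical} gives $\operatorname{div}_{2}Z_{2}+n_{2}Z(\omega)$.
\end{itemize}
Adding these and using $Z(\omega)=Z(k)+Z(\sigma)$ and $n_{1}+n_{2}=n$ gives exactly \eqref{eq:lyra-projectable-divergence}. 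Substituting this into \eqref{eq:lyra-arbs-trace} then yields \eqref{eq:lyra-arbs-projectable-trace}.

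\textbf{Cross-check via the volume form.} By \eqref{eq:lyra-volume-element}, $d\mu_{L}=e^{n\sigma}f^{n_{2}}\,d\mu_{g_{1}}\,d\mu_{g_{2}}$. Hence
\[
\operatorname{div}_{L}Z=\operatorname{div}_{\mu_{1}\otimes\mu_{2}}Z+Z\bigl(\ln(e^{n\sigma}f^{n_{2}})\bigr).
\]
For a projectable field $Z=Z_{1}+Z_{2}$, the first term is $\operatorname{div}_{1}Z_{1}+\operatorname{div}_{2}Z_{2}$, so this reproduces the same formula.

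\textbf{Main obstacle.} The only delicate point is the partial divergences. One must note that $\operatorname{div}_{1}Z_{1}$ and $\operatorname{div}_{2}Z_{2}$ are functions on a single factor, because the lifts are projectable. One must also check that the trace of $\mathcal L_{Z_{j}}g_{j}$ on factor $j$ is $2\operatorname{div}_{j}Z_{j}$ even though $f$ and $\sigma$ depend on both factors. This holds because those functions enter only through the explicit conformal terms already separated out in Proposition~\ref{prop:lyra-lie-splitting}.
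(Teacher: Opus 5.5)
Your argument is correct. Step~1 is exactly the paper's trace computation. For the projectable divergence formula, however, your primary route differs from the paper's. The paper does not use Proposition~\ref{prop:lyra-lie-splitting} there. It argues directly from the volume element $d\mu_{L}=e^{n\sigma}f^{n_{2}}\,d\mu_{g_{1}}d\mu_{g_{2}}$ and the density rule $\operatorname{div}_{\theta\mu}Z=\operatorname{div}_{\mu}Z+Z(\ln\theta)$, which is precisely your ``cross-check''.

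Your main computation instead traces the components of Proposition~\ref{prop:lyra-lie-splitting} in the $g_{L}$-orthonormal frame $\{e^{-\sigma}X_{i}\}\cup\{e^{-\sigma}f^{-1}U_{a}\}$. This is legitimate because $\mathcal L_{Z}g_{L}$ is tensorial, so rescaling the lifted frames causes no trouble. The route is somewhat longer, but it stays entirely within identities already established in the paper. It also shows where each term originates: the horizontal block contributes $n_{1}Z(\sigma)$, and the vertical block contributes $n_{2}Z(\omega)=n_{2}Z(k)+n_{2}Z(\sigma)$. In effect, you are tracing the horizontal and vertical parts of Theorem~\ref{thm:lyra-arbs-component-characterization} separately.

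The paper's volume-form argument is a one-liner, but it silently uses a splitting fact. Namely, the divergence of a projectable field with respect to the product density $d\mu_{g_{1}}d\mu_{g_{2}}$ equals $\operatorname{div}_{1}Z_{1}+\operatorname{div}_{2}Z_{2}$. This holds because $Z_{1}$ projects to zero on $M_{2}$ and so preserves the pulled-back density $d\mu_{g_{2}}$, and vice versa. Your frame route replaces that fact with the equally standard identity $\operatorname{tr}_{g_{j}}\mathcal L_{Z_{j}}g_{j}=2\operatorname{div}_{j}Z_{j}$ on each factor.
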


\begin{proof}
	Taking the trace of
	\eqref{eq:lyra-arbs-equation} with respect to \(g_{L}\) gives
	\[
	R^{L}
	+
	\frac{1}{2}
	\operatorname{tr}_{g_{L}}
	(\mathcal L_{Z}g_{L})
	=
	n\lambda+n\rho R^{L}.
	\]
	Since
	\[
	\frac{1}{2}
	\operatorname{tr}_{g_{L}}
	(\mathcal L_{Z}g_{L})
	=
	\operatorname{div}_{L}Z,
	\]
	we obtain \eqref{eq:lyra-arbs-trace}.
	
	By Equation~\eqref{eq:lyra-volume-element},
	\[
	d\mu_{L}
	=
	e^{n\sigma}f^{n_{2}}\,
	d\mu_{g_{1}}d\mu_{g_{2}}.
	\]
	Therefore, for a projectable field \(Z=Z_{1}+Z_{2}\),
	\[
	\operatorname{div}_{L}Z
	=
	\operatorname{div}_{1}Z_{1}
	+
	\operatorname{div}_{2}Z_{2}
	+
	Z\!\left(
	\ln(e^{n\sigma}f^{n_{2}})
	\right).
	\]
	Using
	\[
	Z\!\left(
	\ln(e^{n\sigma}f^{n_{2}})
	\right)
	=
	nZ(\sigma)+n_{2}Z(k),
	\]
	we obtain \eqref{eq:lyra-projectable-divergence}. Substitution
	into \eqref{eq:lyra-arbs-trace} proves
	\eqref{eq:lyra-arbs-projectable-trace}.
\end{proof}

\begin{remark}
	\label{rem:critical-bourguignon-parameter}
	The value
	\[
	\rho=\frac{1}{n}
	\]
	is exceptional. In this case, the scalar-curvature term
	disappears from \eqref{eq:lyra-arbs-trace}, and every soliton must
	satisfy
	\[
	\operatorname{div}_{L}Z=n\lambda.
	\]
\end{remark}

\subsection{Gradient Lyra almost Ricci--Bourguignon solitons}
\label{subsec:gradient-lyra-arbs}

\begin{definition}
	\label{def:gradient-lyra-arbs}
	A Lyra almost Ricci--Bourguignon soliton is called
	\emph{gradient} if there exists a smooth function
	\(u\in C^{\infty}(M)\) such that
	\[
	Z=\nabla^{L}u.
	\]
	In this case, Equation~\eqref{eq:lyra-arbs-equation} becomes
	\begin{equation}
		\operatorname{Ric}^{L}
		+
		\operatorname{Hess}^{L}u
		=
		\left(
		\lambda+\rho R^{L}
		\right)g_{L}.
		\label{eq:gradient-lyra-arbs}
	\end{equation}
\end{definition}

Define
\begin{equation}
	\Psi_{u,\sigma}
	=
	\left\langle
	\nabla u,\nabla\sigma
	\right\rangle_{g}
	-
	\Delta\sigma
	-
	(n-2)
	\lvert\nabla\sigma\rvert_{g}^{2}.
	\label{eq:psi-u-sigma}
\end{equation}

\begin{theorem}[Conformal form of the gradient equation]
	\label{thm:gradient-lyra-arbs-conformal-form}
	The gradient Lyra almost Ricci--Bourguignon equation
	\eqref{eq:gradient-lyra-arbs} is equivalent to
	\begin{align}
		\operatorname{Ric}
		+
		\operatorname{Hess}u
		&-
		(n-2)\operatorname{Hess}\sigma
		+
		(n-2)d\sigma\otimes d\sigma
		\nonumber\\
		&-
		du\otimes d\sigma
		-
		d\sigma\otimes du
		+
		\Psi_{u,\sigma}g
		\nonumber\\
		&=
		e^{2\sigma}
		\left(
		\lambda+\rho R^{L}
		\right)g.
		\label{eq:gradient-lyra-arbs-conformal}
	\end{align}
\end{theorem}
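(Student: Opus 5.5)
The plan is a direct substitution: insert the two conformal transformation identities already recorded in Section~\ref{sec:lyra-twisted-geometry} into \eqref{eq:gradient-lyra-arbs} and collect terms. First I check that \eqref{eq:gradient-lyra-arbs} really is the soliton equation \eqref{eq:lyra-arbs-equation} with $Z=\nabla^{L}u$. Since $\nabla^{L}$ is the Levi-Civita connection of $g_{L}$, the standard identity $\tfrac12\mathcal L_{\nabla^{L}u}g_{L}=\operatorname{Hess}^{L}u$ holds, where $\operatorname{Hess}^{L}u$ is the $g_{L}$-Hessian.

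Next I replace $\operatorname{Ric}^{L}$ using \eqref{eq:lyra-ricci-transformation}:
\[
\operatorname{Ric}^{L}=\operatorname{Ric}-(n-2)\operatorname{Hess}\sigma+(n-2)\,d\sigma\otimes d\sigma-\Theta_{\sigma}\,g,
\]
with $\Theta_{\sigma}$ as in \eqref{eq:theta-sigma}. I replace $\operatorname{Hess}^{L}u$ using \eqref{eq:lyra-hessian-function}:
\[
\operatorname{Hess}^{L}u=\operatorname{Hess}u-du\otimes d\sigma-d\sigma\otimes du+\langle\nabla u,\nabla\sigma\rangle_{g}\,g.
\]
On the right-hand side I use $g_{L}=e^{2\sigma}g$, so that $(\lambda+\rho R^{L})g_{L}=e^{2\sigma}(\lambda+\rho R^{L})g$.

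Adding the two expansions, every term that is not a multiple of $g$ appears exactly as on the left of \eqref{eq:gradient-lyra-arbs-conformal}. The multiples of $g$ combine into the single coefficient
\[
\langle\nabla u,\nabla\sigma\rangle_{g}-\Delta\sigma-(n-2)\lvert\nabla\sigma\rvert_{g}^{2}=\Psi_{u,\sigma},
\]
by \eqref{eq:psi-u-sigma}. This gives \eqref{eq:gradient-lyra-arbs-conformal}. Each step is an identity of symmetric $2$-tensors and no information is discarded, so the argument is reversible and yields equivalence rather than just one implication.

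The only point needing care is sign and normalization bookkeeping. Specifically, I must check that the Hessian formula \eqref{eq:lyra-hessian-function} carries the $+\langle\nabla u,\nabla\sigma\rangle g$ term with the same convention as the connection formula \eqref{eq:lyra-connection-general}. I will verify this by computing $\operatorname{Hess}^{L}u(A,B)=AB(u)-(\nabla^{L}_{A}B)(u)$ directly from \eqref{eq:lyra-connection-general}. Apart from that check, no genuine obstacle is expected.
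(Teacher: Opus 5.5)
Your proposal is correct and matches the paper's proof: substitute \eqref{eq:lyra-ricci-transformation} and \eqref{eq:lyra-hessian-function}, collect the multiples of $g$ into $\Psi_{u,\sigma}g$, and use $g_{L}=e^{2\sigma}g$. The sign check you planned also comes out as you hoped: computing $\operatorname{Hess}^{L}u(A,B)=AB(u)-(\nabla^{L}_{A}B)(u)$ from \eqref{eq:lyra-connection-general} gives exactly the $+\langle\nabla u,\nabla\sigma\rangle_{g}\,g$ term, so no correction is needed.
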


\begin{proof}
	Using Equations~\eqref{eq:lyra-ricci-transformation} and
	\eqref{eq:lyra-hessian-function}, we have
	\begin{align*}
		\operatorname{Ric}^{L}
		+
		\operatorname{Hess}^{L}u
		={}&
		\operatorname{Ric}
		-
		(n-2)
		\left(
		\operatorname{Hess}\sigma
		-
		d\sigma\otimes d\sigma
		\right)
		\\
		&-
		\left[
		\Delta\sigma
		+
		(n-2)
		\lvert\nabla\sigma\rvert_{g}^{2}
		\right]g
		\\
		&+
		\operatorname{Hess}u
		-
		du\otimes d\sigma
		-
		d\sigma\otimes du
		\\
		&+
		\left\langle
		\nabla u,\nabla\sigma
		\right\rangle_{g}g.
	\end{align*}
	Collecting the terms proportional to \(g\) gives
	\(\Psi_{u,\sigma}g\). Since
	\[
	g_{L}=e^{2\sigma}g,
	\]
	Equation~\eqref{eq:gradient-lyra-arbs} is equivalent to
	\eqref{eq:gradient-lyra-arbs-conformal}.
\end{proof}

\begin{theorem}[Gradient component characterization]
	\label{thm:gradient-lyra-arbs-components}
	A smooth function \(u\) determines a gradient Lyra almost
	Ricci--Bourguignon soliton if and only if the following equations
	hold.
	
	For all \(X,Y\in\Gamma(\mathcal H)\),
	\begin{align}
		&\operatorname{Ric}_{1}(X,Y)
		-
		n_{2}
		\left[
		\operatorname{Hess}_{1}k(X,Y)
		+
		X(k)Y(k)
		\right]
		\nonumber\\
		&\quad
		+
		\operatorname{Hess}_{1}u(X,Y)
		-
		(n-2)\operatorname{Hess}_{1}\sigma(X,Y)
		\nonumber\\
		&\quad
		+
		(n-2)X(\sigma)Y(\sigma)
		-
		X(u)Y(\sigma)
		-
		X(\sigma)Y(u)
		\nonumber\\
		&=
		\left[
		e^{2\sigma}\Lambda
		-
		\Psi_{u,\sigma}
		\right]
		g_{1}(X,Y).
		\label{eq:gradient-lyra-arbs-horizontal}
	\end{align}
	
	For all \(U,V\in\Gamma(\mathcal V)\),
	\begin{align}
		&\operatorname{Ric}(U,V)
		+
		\operatorname{Hess}u(U,V)
		-
		(n-2)\operatorname{Hess}\sigma(U,V)
		\nonumber\\
		&\quad
		+
		(n-2)U(\sigma)V(\sigma)
		-
		U(u)V(\sigma)
		-
		U(\sigma)V(u)
		\nonumber\\
		&=
		f^{2}
		\left[
		e^{2\sigma}\Lambda
		-
		\Psi_{u,\sigma}
		\right]
		g_{2}(U,V).
		\label{eq:gradient-lyra-arbs-vertical}
	\end{align}
	
	For all
	\(X\in\Gamma(\mathcal H)\) and
	\(U\in\Gamma(\mathcal V)\),
	\begin{align}
		0
		={}&
		-
		(n_{2}-1)XU(k)
		+
		XU(u)
		-
		X(k)U(u)
		\nonumber\\
		&-
		(n-2)
		\left[
		XU(\sigma)
		-
		X(k)U(\sigma)
		\right]
		\nonumber\\
		&+
		(n-2)X(\sigma)U(\sigma)
		-
		X(u)U(\sigma)
		-
		X(\sigma)U(u).
		\label{eq:gradient-lyra-arbs-mixed}
	\end{align}
\end{theorem}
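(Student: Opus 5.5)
The plan is to start from Theorem~\ref{thm:gradient-lyra-arbs-conformal-form}. It says that \eqref{eq:gradient-lyra-arbs} is equivalent to the single tensor identity \eqref{eq:gradient-lyra-arbs-conformal} with respect to \(g\). Because \(TM=\mathcal H\oplus\mathcal V\) is \(g\)-orthogonal, and both sides are symmetric bilinear forms, this identity holds if and only if it holds on three kinds of argument pairs: \((X,Y)\), \((U,V)\) and \((X,U)\), with \(X,Y\) horizontal and \(U,V\) vertical lifts. The converse direction is the same decomposition argument as at the end of the proof of Theorem~\ref{thm:lyra-arbs-component-characterization}, so the work reduces to evaluating each term of \eqref{eq:gradient-lyra-arbs-conformal} on these three pairs. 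Throughout I write \(\Lambda=\lambda+\rho R^{L}\) as in \eqref{eq:lyra-arbs-lambda}.

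\textbf{Horizontal pairs.} Here \(g(X,Y)=g_{1}(X,Y)\). I use \eqref{eq:twisted-ricci-horizontal} for \(\operatorname{Ric}(X,Y)\) and \eqref{eq:twisted-hessian-horizontal} to replace \(\operatorname{Hess}u\) and \(\operatorname{Hess}\sigma\) by \(\operatorname{Hess}_{1}u\) and \(\operatorname{Hess}_{1}\sigma\). The first-order terms \(d\sigma\otimes d\sigma\), \(du\otimes d\sigma\) and \(d\sigma\otimes du\) evaluate directly to \(X(\sigma)Y(\sigma)\), \(X(u)Y(\sigma)\) and \(X(\sigma)Y(u)\). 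Moving \(\Psi_{u,\sigma}g_{1}(X,Y)\) to the right-hand side then gives \eqref{eq:gradient-lyra-arbs-horizontal}.

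\textbf{Vertical pairs.} Here \(g(U,V)=f^{2}g_{2}(U,V)\). I keep \(\operatorname{Ric}(U,V)\), \(\operatorname{Hess}u(U,V)\) and \(\operatorname{Hess}\sigma(U,V)\) in unexpanded form, as the statement does, with the understanding that they are given by \eqref{eq:twisted-ricci-vertical} and \eqref{eq:twisted-hessian-vertical}. Moving \(\Psi_{u,\sigma}f^{2}g_{2}(U,V)\) to the right-hand side yields \eqref{eq:gradient-lyra-arbs-vertical}.

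\textbf{Mixed pairs.} Since \(g(X,U)=0\), both the \(\Psi_{u,\sigma}g\) term and the right-hand side \(e^{2\sigma}\Lambda g\) drop out. I use \eqref{eq:twisted-ricci-mixed}, \(\operatorname{Ric}(X,U)=-(n_{2}-1)XU(k)\), and \eqref{eq:twisted-hessian-mixed} in the forms \(\operatorname{Hess}u(X,U)=XU(u)-X(k)U(u)\) and \(\operatorname{Hess}\sigma(X,U)=XU(\sigma)-X(k)U(\sigma)\). Adding the first-order terms \((n-2)X(\sigma)U(\sigma)-X(u)U(\sigma)-X(\sigma)U(u)\) gives exactly the right-hand side of \eqref{eq:gradient-lyra-arbs-mixed}.

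As a consistency check, setting \(u=0\) should recover \eqref{eq:mixed-lyra-ricci}, i.e.\ \(\operatorname{Ric}^{L}(X,U)=0\), in agreement with \eqref{eq:lyra-arbs-mixed}. The mixed computation is the only place where a sign slip is likely, for instance in the \(-X(k)U(\cdot)\) terms coming from \(\nabla_{X}U=X(k)U\). This check is how I would verify it.

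The remaining bookkeeping point is that the gradient potential \(Z=\nabla^{L}u=e^{-2\sigma}(\nabla_{1}u+f^{-2}\nabla_{2}u)\) is not projectable in general. For that reason I will not go through Proposition~\ref{prop:lyra-lie-splitting}; the route via \eqref{eq:lyra-hessian-function} and Theorem~\ref{thm:gradient-lyra-arbs-conformal-form} avoids the projectability assumption entirely. I expect no genuine obstacle; the proof is a direct substitution.
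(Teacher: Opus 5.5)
Your proposal is correct and matches the paper's proof: you evaluate the conformal form \eqref{eq:gradient-lyra-arbs-conformal} from Theorem~\ref{thm:gradient-lyra-arbs-conformal-form} separately on horizontal, vertical, and mixed pairs, using \eqref{eq:twisted-ricci-components} and \eqref{eq:twisted-hessian}, and the converse comes from the horizontal--vertical decomposition. Your remark that this route never needs $\nabla^{L}u$ to be projectable is accurate, and the paper's argument also relies on it implicitly.
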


\begin{proof}
	Equation~\eqref{eq:gradient-lyra-arbs-conformal} is evaluated
	separately on horizontal, vertical, and mixed vector fields.
	
	For horizontal fields, use
	\eqref{eq:twisted-ricci-horizontal} together with
	\[
	\operatorname{Hess}u(X,Y)
	=
	\operatorname{Hess}_{1}u(X,Y)
	\]
	and
	\[
	\operatorname{Hess}\sigma(X,Y)
	=
	\operatorname{Hess}_{1}\sigma(X,Y).
	\]
	This gives \eqref{eq:gradient-lyra-arbs-horizontal}.
	
	For vertical fields, the required Ricci and Hessian terms are
	given by
	\eqref{eq:twisted-ricci-vertical} and
	\eqref{eq:twisted-hessian-vertical}. Substitution into
	\eqref{eq:gradient-lyra-arbs-conformal} yields
	\eqref{eq:gradient-lyra-arbs-vertical}.
	
	For mixed fields, the metric term vanishes. Using
	\eqref{eq:twisted-ricci-mixed} and
	\eqref{eq:twisted-hessian-mixed}, we obtain
	\begin{align*}
		0
		={}&
		-
		(n_{2}-1)XU(k)
		+
		XU(u)-X(k)U(u)
		\\
		&-
		(n-2)
		\left[
		XU(\sigma)-X(k)U(\sigma)
		\right]
		\\
		&+
		(n-2)X(\sigma)U(\sigma)
		-
		X(u)U(\sigma)
		-
		X(\sigma)U(u),
	\end{align*}
	which is \eqref{eq:gradient-lyra-arbs-mixed}.
	
	Conversely, the three equations determine
	\eqref{eq:gradient-lyra-arbs-conformal} on every pair of tangent
	vectors and hence imply
	\eqref{eq:gradient-lyra-arbs}.
\end{proof}

\begin{corollary}[Gradient trace identity]
	\label{cor:gradient-lyra-arbs-trace}
	Every gradient Lyra almost Ricci--Bourguignon soliton satisfies
	\begin{equation}
		(1-n\rho)R^{L}
		+
		\Delta^{L}u
		=
		n\lambda.
		\label{eq:gradient-lyra-arbs-trace}
	\end{equation}
	Equivalently,
	\begin{align}
		(1-n\rho)R^{L}
		+
		e^{-2\sigma}
		\left[
		\Delta u
		+
		(n-2)
		\left\langle
		\nabla\sigma,\nabla u
		\right\rangle_{g}
		\right]
		=
		n\lambda.
		\label{eq:gradient-lyra-arbs-trace-expanded}
	\end{align}
\end{corollary}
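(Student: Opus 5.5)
The plan is to trace the gradient equation \eqref{eq:gradient-lyra-arbs} with respect to \(g_{L}\), exactly as in the proof of Proposition~\ref{prop:lyra-arbs-trace}, and then rewrite the Lyra Laplacian in terms of \(g\) using \eqref{eq:lyra-laplacian-function}.

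\textbf{Step 1: the Lyra trace.} Take \(\operatorname{tr}_{g_{L}}\) of \eqref{eq:gradient-lyra-arbs}. The three terms contribute:
\begin{itemize}
\item the Ricci term gives \(\operatorname{tr}_{g_{L}}\operatorname{Ric}^{L}=R^{L}\), by definition of the scalar curvature of \(g_{L}\);
\item the Hessian term gives \(\operatorname{tr}_{g_{L}}\operatorname{Hess}^{L}u=\Delta^{L}u\), since \(\operatorname{Hess}^{L}u\) is the Hessian of \(u\) for the Levi-Civita connection \(\nabla^{L}\);
\item the right-hand side gives \(n(\lambda+\rho R^{L})\), since \(\operatorname{tr}_{g_{L}}g_{L}=n\).
\end{itemize}
Collecting the \(R^{L}\) terms yields \eqref{eq:gradient-lyra-arbs-trace}. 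Equivalently, one can note that \(Z=\nabla^{L}u\) gives \(\operatorname{div}_{L}Z=\Delta^{L}u\) and then specialise \eqref{eq:lyra-arbs-trace} directly.

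\textbf{Step 2: the expanded form.} Substitute \eqref{eq:lyra-laplacian-function} for \(\Delta^{L}u\). This is immediate and gives \eqref{eq:gradient-lyra-arbs-trace-expanded}.

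\textbf{Consistency check.} As a check, I would also trace \eqref{eq:lyra-hessian-function} with \(g_{L}^{-1}=e^{-2\sigma}g^{-1}\). This produces
\[
e^{-2\sigma}\bigl[\Delta u-2\langle\nabla u,\nabla\sigma\rangle_{g}+n\langle\nabla u,\nabla\sigma\rangle_{g}\bigr],
\]
which agrees with \eqref{eq:lyra-laplacian-function}.

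\textbf{Expected difficulty.} The argument is purely formal and I expect no real obstacle. The only point needing care is the normalisation factor \(e^{-2\sigma}\) that appears when traces are taken with respect to \(g_{L}\) rather than \(g\).
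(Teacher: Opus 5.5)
Your proposal is correct and matches the paper's proof: take the $g_{L}$-trace of \eqref{eq:gradient-lyra-arbs} to get $R^{L}+\Delta^{L}u=n\lambda+n\rho R^{L}$, rearrange, and substitute \eqref{eq:lyra-laplacian-function} to obtain the expanded form. Your consistency check, tracing \eqref{eq:lyra-hessian-function} with $g_{L}^{-1}=e^{-2\sigma}g^{-1}$, is also correct; it confirms the Lyra Laplacian formula that the paper simply quotes.
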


\begin{proof}
	Taking the \(g_{L}\)-trace of
	\eqref{eq:gradient-lyra-arbs} gives
	\[
	R^{L}
	+
	\Delta^{L}u
	=
	n\lambda+n\rho R^{L},
	\]
	which proves \eqref{eq:gradient-lyra-arbs-trace}. The expanded
	form follows directly from
	Equation~\eqref{eq:lyra-laplacian-function}.
\end{proof}

\begin{corollary}[Compact integral identity]
	\label{cor:compact-gradient-lyra-arbs}
	Let \(M\) be compact, oriented, and without boundary. Then every
	gradient Lyra almost Ricci--Bourguignon soliton satisfies
	\begin{equation}
		(1-n\rho)
		\int_{M}
		R^{L}\,d\mu_{L}
		=
		n
		\int_{M}
		\lambda\,d\mu_{L}.
		\label{eq:compact-gradient-integral}
	\end{equation}
	In particular, if \(\lambda\) is constant and
	\(\rho\neq 1/n\), then
	\begin{equation}
		\frac{1}{\operatorname{Vol}_{L}(M)}
		\int_{M}
		R^{L}\,d\mu_{L}
		=
		\frac{n\lambda}{1-n\rho}.
		\label{eq:average-lyra-scalar-curvature}
	\end{equation}
\end{corollary}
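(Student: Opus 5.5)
The plan is to integrate the gradient trace identity of Corollary~\ref{cor:gradient-lyra-arbs-trace} against the Lyra volume form \(d\mu_{L}\) and show that the Laplacian term integrates to zero. Since \(M\) is compact, oriented and without boundary, and \(g_{L}=e^{2\sigma}g\) is a smooth pseudo-Riemannian metric on \(M\) with Levi-Civita connection \(\nabla^{L}\), the divergence theorem holds for \(g_{L}\) with respect to \(d\mu_{L}\). Starting from \eqref{eq:gradient-lyra-arbs-trace},
\[
(1-n\rho)R^{L}+\Delta^{L}u=n\lambda,
\]
I integrate both sides over \(M\) with respect to \(d\mu_{L}\).

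The key step is the identity \(\int_{M}\Delta^{L}u\,d\mu_{L}=0\). It follows because \(\Delta^{L}u=\operatorname{div}_{L}(\nabla^{L}u)\) is the \(g_{L}\)-divergence of the globally defined smooth field \(\nabla^{L}u=Z\), and Stokes' theorem applies on a closed manifold. If one prefers to stay with the background metric, the same conclusion follows from \eqref{eq:lyra-laplacian-function} and \eqref{eq:lyra-volume-element}:
\[
\Delta^{L}u\,d\mu_{L}=e^{(n-2)\sigma}\bigl[\Delta u+(n-2)\langle\nabla\sigma,\nabla u\rangle_{g}\bigr]d\mu_{g}=\operatorname{div}_{g}\!\bigl(e^{(n-2)\sigma}\nabla u\bigr)\,d\mu_{g},
\]
which again integrates to zero. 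Here \(d\mu_{L}=e^{n\sigma}d\mu_{g}\), and the last equality comes from expanding the divergence of a function multiple of a gradient. This yields \eqref{eq:compact-gradient-integral}.

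For the second statement, with \(\lambda\) constant the right-hand side becomes \(n\lambda\operatorname{Vol}_{L}(M)\), where \(\operatorname{Vol}_{L}(M)=\int_{M}d\mu_{L}>0\). When \(\rho\neq 1/n\), I divide by \((1-n\rho)\operatorname{Vol}_{L}(M)\) to obtain \eqref{eq:average-lyra-scalar-curvature}.

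There is no real obstacle. The only point needing care is justifying the vanishing of the boundary term, which requires the closedness and orientability hypotheses and a global, smooth \(u\) (hence \(Z\)). In the pseudo-Riemannian case, \(d\mu_{L}\) should be read as the volume density of \(|\det g_{L}|\), for which the divergence theorem still holds.
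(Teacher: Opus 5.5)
Your proposal is correct and follows the paper's proof: integrate the gradient trace identity $(1-n\rho)R^{L}+\Delta^{L}u=n\lambda$ against $d\mu_{L}$, use the divergence theorem to get $\int_{M}\Delta^{L}u\,d\mu_{L}=0$, and, when $\lambda$ is constant and $\rho\neq 1/n$, divide by $(1-n\rho)\operatorname{Vol}_{L}(M)$. Your background-metric rewriting $\Delta^{L}u\,d\mu_{L}=\operatorname{div}_{g}\bigl(e^{(n-2)\sigma}\nabla u\bigr)\,d\mu_{g}$ is correct; it is an optional cross-check that the paper does not include.
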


\begin{proof}
	Integrating \eqref{eq:gradient-lyra-arbs-trace} over \(M\) and
	using the divergence theorem gives
	\[
	\int_{M}
	\Delta^{L}u\,d\mu_{L}
	=
	0.
	\]
	Hence \eqref{eq:compact-gradient-integral} follows. If
	\(\lambda\) is constant and \(\rho\neq 1/n\), division by
	\((1-n\rho)\operatorname{Vol}_{L}(M)\) gives
	\eqref{eq:average-lyra-scalar-curvature}.
\end{proof}

The systems
\eqref{eq:lyra-arbs-horizontal}--
\eqref{eq:lyra-arbs-mixed} and
\eqref{eq:gradient-lyra-arbs-horizontal}--
\eqref{eq:gradient-lyra-arbs-mixed} provide the basic equations for
the remainder of the paper. In the next section, the mixed
conditions will be used to determine when the soliton structure is
inherited by the factor manifolds and when the Lyra scale forces,
prevents, or compensates for the separation of the twisting
function.

\section{Factor Inheritance and Scale--Twisting Rigidity}
\label{sec:factor-inheritance-rigidity}

The decomposition obtained in
Theorem~\ref{thm:lyra-arbs-component-characterization} shows that
the horizontal and vertical equations determine the structures
induced on the factor manifolds, whereas the mixed equation
\eqref{eq:lyra-arbs-mixed} controls the compatibility between the
twisting function and the Lyra scale. In the ordinary setting, the
mixed component frequently forces the twisting function to become
multiplicatively separable; see
\cite{ElsharkawyTawfiq2026,ElsharkawyCesaranoWanas2026}.
The purpose of this section is to determine how the Lyra scale
modifies this phenomenon.

Throughout the section, let
\[
M=M_{1}\times_{f}M_{2},
\qquad
g_{L}=e^{2\sigma}
\left(
g_{1}\oplus f^{2}g_{2}
\right),
\]
and let
\[
Z=Z_{1}+Z_{2}
\]
be a projectable potential vector field. We continue to write
\[
k=\ln f,
\qquad
n_{1}=\dim M_{1},
\qquad
n_{2}=\dim M_{2},
\qquad
n=n_{1}+n_{2}.
\]

For a symmetric \((0,2)\)-tensor \(S\) on an \(m\)-dimensional
pseudo-Riemannian manifold \((N,h)\), define its trace-free part by
\begin{equation}
	\operatorname{tf}_{h}S
	=
	S
	-
	\frac{1}{m}
	\left(
	\operatorname{tr}_{h}S
	\right)h.
	\label{eq:trace-free-part}
\end{equation}
Thus, \(S\) is proportional to \(h\) if and only if
\[
\operatorname{tf}_{h}S=0.
\]

\subsection{Inheritance by the base manifold}
\label{subsec:base-inheritance}

We first consider the base-adapted case
\begin{equation}
	\sigma=\sigma_{1},
	\qquad
	k=k_{1}+k_{2},
	\label{eq:base-adapted-data}
\end{equation}
where
\[
\sigma_{1},k_{1}\in C^{\infty}(M_{1}),
\qquad
k_{2}\in C^{\infty}(M_{2}).
\]
The corresponding Lyra metric on the base is
\begin{equation}
	g_{L,1}
	=
	e^{2\sigma_{1}}g_{1}.
	\label{eq:base-lyra-metric}
\end{equation}
Let
\[
\operatorname{Ric}^{L,1},
\qquad
R^{L,1}
\]
denote the Ricci tensor and scalar curvature of
\(g_{L,1}\).

Define the symmetric tensor
\begin{align}
	\mathcal B_{1}
	={}&
	\operatorname{Hess}_{1}k_{1}
	+
	dk_{1}\otimes dk_{1}
	+
	\operatorname{Hess}_{1}\sigma_{1}
	-
	d\sigma_{1}\otimes d\sigma_{1},
	\label{eq:base-inheritance-tensor}
\end{align}
and the scalar function
\begin{equation}
	b_{1}
	=
	\left\langle
	\nabla_{1}k_{1},
	\nabla_{1}\sigma_{1}
	\right\rangle_{g_{1}}
	+
	\left\lvert
	\nabla_{1}\sigma_{1}
	\right\rvert_{g_{1}}^{2}.
	\label{eq:base-inheritance-scalar}
\end{equation}

\begin{lemma}
	\label{lem:base-lyra-ricci-splitting}
	Under \eqref{eq:base-adapted-data}, the horizontal component of
	the total Lyra Ricci tensor satisfies
	\begin{align}
		\operatorname{Ric}^{L}(X,Y)
		={}&
		\operatorname{Ric}^{L,1}(X,Y)
		-
		n_{2}\mathcal B_{1}(X,Y)
		-
		n_{2}b_{1}g_{1}(X,Y)
		\label{eq:base-lyra-ricci-splitting}
	\end{align}
	for all \(X,Y\in\Gamma(\mathcal H)\).
\end{lemma}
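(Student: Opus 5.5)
The plan is to compare two instances of the conformal transformation formula \eqref{eq:lyra-ricci-transformation}: the one for the total Lyra metric \(g_{L}=e^{2\sigma}g\) in dimension \(n\), and the one for the base Lyra metric \(g_{L,1}=e^{2\sigma_{1}}g_{1}\) in dimension \(n_{1}\). Subtracting them, the dimension mismatch \(n-2\) versus \(n_{1}-2\) should produce exactly \(n_{2}\) copies of the \(\sigma_{1}\)-terms. Together with the twisting contribution from \eqref{eq:twisted-ricci-horizontal}, these should assemble into \(-n_{2}\mathcal B_{1}-n_{2}b_{1}g_{1}\).

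\textbf{Step 1: the total Lyra Ricci tensor.} I evaluate \eqref{eq:lyra-ricci-transformation} on horizontal \(X,Y\), as in the proof of Theorem~\ref{thm:lyra-arbs-component-characterization}. Under \eqref{eq:base-adapted-data}, \(k_{2}\) does not depend on \(M_{1}\), so
\[
\operatorname{Hess}_{1}k=\operatorname{Hess}_{1}k_{1},\qquad X(k)=X(k_{1}),\qquad \nabla_{1}k=\nabla_{1}k_{1}.
\]
Since \(\sigma=\sigma_{1}\), we also have \(\nabla_{2}\sigma=0\) and \(\operatorname{Hess}\sigma(X,Y)=\operatorname{Hess}_{1}\sigma_{1}(X,Y)\) by \eqref{eq:twisted-hessian-horizontal}. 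Moreover, \eqref{eq:twisted-gradient-norm} gives \(\lvert\nabla\sigma\rvert_{g}^{2}=\lvert\nabla_{1}\sigma_{1}\rvert_{g_{1}}^{2}\), and \eqref{eq:twisted-laplacian} with \(\Delta_{2}\sigma=0\) gives
\[
\Delta\sigma=\Delta_{1}\sigma_{1}+n_{2}\langle\nabla_{1}k_{1},\nabla_{1}\sigma_{1}\rangle_{g_{1}}.
\]
Substituting all of this, I get
\begin{align*}
\operatorname{Ric}^{L}(X,Y)={}&\operatorname{Ric}_{1}(X,Y)-n_{2}\bigl[\operatorname{Hess}_{1}k_{1}+dk_{1}\otimes dk_{1}\bigr](X,Y)
-(n-2)\bigl[\operatorname{Hess}_{1}\sigma_{1}-d\sigma_{1}\otimes d\sigma_{1}\bigr](X,Y)\\
&-\bigl[\Delta_{1}\sigma_{1}+n_{2}\langle\nabla_{1}k_{1},\nabla_{1}\sigma_{1}\rangle_{g_{1}}+(n-2)\lvert\nabla_{1}\sigma_{1}\rvert_{g_{1}}^{2}\bigr]g_{1}(X,Y).
\end{align*}

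\textbf{Step 2: the base Lyra Ricci tensor.} Applying the same conformal formula on \(M_{1}\) with dimension \(n_{1}\) gives
\[
\operatorname{Ric}^{L,1}=\operatorname{Ric}_{1}-(n_{1}-2)\bigl(\operatorname{Hess}_{1}\sigma_{1}-d\sigma_{1}\otimes d\sigma_{1}\bigr)-\bigl[\Delta_{1}\sigma_{1}+(n_{1}-2)\lvert\nabla_{1}\sigma_{1}\rvert_{g_{1}}^{2}\bigr]g_{1}.
\]

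\textbf{Step 3: subtract and regroup.} I subtract the second expression from the first and use \((n-2)-(n_{1}-2)=n_{2}\). The \(\operatorname{Ric}_{1}\) and \(\Delta_{1}\sigma_{1}\) terms cancel. What remains is
\[
-n_{2}\bigl[\operatorname{Hess}_{1}k_{1}+dk_{1}\otimes dk_{1}+\operatorname{Hess}_{1}\sigma_{1}-d\sigma_{1}\otimes d\sigma_{1}\bigr]
-n_{2}\bigl[\langle\nabla_{1}k_{1},\nabla_{1}\sigma_{1}\rangle_{g_{1}}+\lvert\nabla_{1}\sigma_{1}\rvert_{g_{1}}^{2}\bigr]g_{1}.
\]
By \eqref{eq:base-inheritance-tensor} and \eqref{eq:base-inheritance-scalar}, this is exactly \(-n_{2}\mathcal B_{1}-n_{2}b_{1}g_{1}\), which proves \eqref{eq:base-lyra-ricci-splitting}.

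\textbf{Main obstacle.} The only delicate point is the scalar coefficient of \(g_{1}\). The factor \(n_{2}\langle\nabla_{1}k_{1},\nabla_{1}\sigma_{1}\rangle_{g_{1}}\) comes solely from the twisted Laplacian \eqref{eq:twisted-laplacian}, so I need to check that no \(f^{-2}\)-terms survive there. They do not, because \(\nabla_{2}\sigma=0\) and \(\Delta_{2}\sigma=0\).
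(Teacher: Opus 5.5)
Your proposal is correct and follows essentially the same route as the paper: expand $\operatorname{Ric}^{L}(X,Y)$ via the conformal formula and the twisted horizontal Ricci identity, reduce $\Delta\sigma_{1}$ and $\lvert\nabla\sigma_{1}\rvert_{g}^{2}$ to base quantities, and compare with the intrinsic $n_{1}$-dimensional conformal formula using $(n-2)-(n_{1}-2)=n_{2}$. The only cosmetic difference is that the paper first rewrites $\Theta_{\sigma_{1}}=\Delta_{1}\sigma_{1}+(n_{1}-2)\lvert\nabla_{1}\sigma_{1}\rvert_{g_{1}}^{2}+n_{2}b_{1}$ before comparing, whereas you subtract the two expressions directly.
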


\begin{proof}
	Using \eqref{eq:lyra-ricci-transformation} and
	\eqref{eq:twisted-ricci-horizontal}, we obtain
	\begin{align}
		\operatorname{Ric}^{L}(X,Y)
		={}&
		\operatorname{Ric}_{1}(X,Y)
		-
		n_{2}
		\left[
		\operatorname{Hess}_{1}k_{1}(X,Y)
		+
		X(k_{1})Y(k_{1})
		\right]
		\nonumber\\
		&-
		(n-2)
		\left[
		\operatorname{Hess}_{1}\sigma_{1}(X,Y)
		-
		X(\sigma_{1})Y(\sigma_{1})
		\right]
		-
		\Theta_{\sigma_{1}}g_{1}(X,Y).
		\label{eq:base-total-ricci-expanded}
	\end{align}
	By \eqref{eq:twisted-laplacian},
	\begin{equation}
		\Delta\sigma_{1}
		=
		\Delta_{1}\sigma_{1}
		+
		n_{2}
		\left\langle
		\nabla_{1}k_{1},
		\nabla_{1}\sigma_{1}
		\right\rangle_{g_{1}},
		\label{eq:base-scale-laplacian}
	\end{equation}
	while
	\[
	\lvert\nabla\sigma_{1}\rvert_{g}^{2}
	=
	\lvert\nabla_{1}\sigma_{1}\rvert_{g_{1}}^{2}.
	\]
	Therefore,
	\begin{align}
		\Theta_{\sigma_{1}}
		={}&
		\Delta_{1}\sigma_{1}
		+
		(n_{1}-2)
		\lvert\nabla_{1}\sigma_{1}\rvert_{g_{1}}^{2}
		+
		n_{2}b_{1}.
		\label{eq:base-theta-splitting}
	\end{align}
	
	On the other hand, the conformal transformation formula on the
	\(n_{1}\)-dimensional base gives
	\begin{align}
		\operatorname{Ric}^{L,1}
		={}&
		\operatorname{Ric}_{1}
		-
		(n_{1}-2)
		\left(
		\operatorname{Hess}_{1}\sigma_{1}
		-
		d\sigma_{1}\otimes d\sigma_{1}
		\right)
		\nonumber\\
		&-
		\left[
		\Delta_{1}\sigma_{1}
		+
		(n_{1}-2)
		\lvert\nabla_{1}\sigma_{1}\rvert_{g_{1}}^{2}
		\right]g_{1}.
		\label{eq:base-intrinsic-lyra-ricci}
	\end{align}
	Substituting \eqref{eq:base-theta-splitting} into
	\eqref{eq:base-total-ricci-expanded} and comparing with
	\eqref{eq:base-intrinsic-lyra-ricci} yields
	\eqref{eq:base-lyra-ricci-splitting}.
\end{proof}

\begin{theorem}[Base inheritance criterion]
	\label{thm:base-inheritance}
	Assume \eqref{eq:base-adapted-data}. For each fixed
	\(y\in M_{2}\), the base manifold
	\[
	\left(
	M_{1},
	g_{L,1},
	Z_{1}
	\right)
	\]
	inherits a Lyra almost Ricci--Bourguignon soliton structure with
	parameter \(\rho\) if and only if
	\begin{equation}
		\operatorname{tf}_{g_{1}}\mathcal B_{1}
		=
		0.
		\label{eq:base-inheritance-condition}
	\end{equation}
	
	More precisely, if
	\begin{equation}
		\mathcal B_{1}
		=
		\beta_{1}g_{1},
		\label{eq:base-inheritance-beta}
	\end{equation}
	then the inherited soliton function is
	\begin{align}
		\lambda_{1,y}
		={}&
		\lambda
		+
		\rho
		\left(
		R^{L}-R^{L,1}
		\right)
		+
		n_{2}e^{-2\sigma_{1}}
		\left(
		\beta_{1}+b_{1}
		\right),
		\label{eq:base-inherited-soliton-function}
	\end{align}
	where the quantities on the right-hand side are evaluated on
	the slice \(M_{1}\times\{y\}\).
	
	A single soliton structure on \(M_{1}\), independent of the
	choice of \(y\), is obtained if and only if the right-hand side
	of \eqref{eq:base-inherited-soliton-function} is independent of
	the fiber variable.
\end{theorem}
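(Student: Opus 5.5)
The plan is to restrict the horizontal part of \eqref{eq:lyra-arbs-compact} to a slice \(M_{1}\times\{y\}\) and rewrite it intrinsically with Lemma~\ref{lem:base-lyra-ricci-splitting}. Under \eqref{eq:base-adapted-data}, \(\sigma=\sigma_{1}\) is fiber independent, so \(g_{L}(X,Y)=e^{2\sigma_{1}}g_{1}(X,Y)=g_{L,1}(X,Y)\).

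For the Lie derivative term, I would use \eqref{eq:lyra-lie-horizontal} and the identity \(\mathcal L_{Z_{1}}g_{L,1}=e^{2\sigma_{1}}\bigl[\mathcal L_{Z_{1}}g_{1}+2Z_{1}(\sigma_{1})g_{1}\bigr]\). Since \(Z_{2}(\sigma_{1})=0\), we have \(Z(\sigma)=Z_{1}(\sigma_{1})\), and therefore
\[
(\mathcal L_{Z}g_{L})(X,Y)=(\mathcal L_{Z_{1}}g_{L,1})(X,Y).
\]
Here I am treating \(Z_{1}\) on each slice as the projected field, as the projectable decomposition \eqref{eq:projectable-potential} allows.

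Substituting \eqref{eq:base-lyra-ricci-splitting} into the horizontal equation then gives, on every slice,
\[
\operatorname{Ric}^{L,1}+\tfrac12\mathcal L_{Z_{1}}g_{L,1}
=n_{2}\mathcal B_{1}+\bigl(n_{2}b_{1}+e^{2\sigma_{1}}\Lambda\bigr)g_{1}.
\]
By Theorem~\ref{thm:lyra-arbs-component-characterization}, this horizontal restriction is exactly what the total soliton equation says on \(\mathcal H\).

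Next I would compare with the inherited soliton equation on \((M_{1},g_{L,1},Z_{1})\), which reads \(\operatorname{Ric}^{L,1}+\tfrac12\mathcal L_{Z_{1}}g_{L,1}=(\lambda_{1}+\rho R^{L,1})g_{L,1}\). The left-hand sides of the two equations agree, so the inherited equation holds for some function \(\lambda_{1}\) if and only if the right-hand side \(n_{2}\mathcal B_{1}+(\cdots)g_{1}\) is pointwise proportional to \(g_{1}\). That happens if and only if \(n_{2}\mathcal B_{1}\) is proportional to \(g_{1}\), which for \(n_{2}\ge1\) is precisely \(\operatorname{tf}_{g_{1}}\mathcal B_{1}=0\) by \eqref{eq:trace-free-part}.

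For the "if" direction, I would take \(\mathcal B_{1}=\beta_{1}g_{1}\) and equate coefficients:
\[
(\lambda_{1}+\rho R^{L,1})e^{2\sigma_{1}}=n_{2}\beta_{1}+n_{2}b_{1}+e^{2\sigma_{1}}(\lambda+\rho R^{L}).
\]
Solving for \(\lambda_{1}\) gives \eqref{eq:base-inherited-soliton-function}, with \(\beta_{1}=\operatorname{tr}_{g_{1}}\mathcal B_{1}/n_{1}\). The parameter \(\rho\) is kept as given, and \(\lambda_{1}\) is only required to be a smooth function, so no further constraint arises. The final assertion follows because the slice structures coincide for all \(y\) exactly when \(\lambda_{1,y}\) is independent of \(y\). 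The vector field \(Z_{1}\) and the metric \(g_{L,1}\) already do not depend on \(y\).

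The main obstacle I expect is justifying that the left-hand sides agree on each slice. This uses both the fiber independence of \(\sigma_{1}\), which gives \(Z(\sigma)=Z_{1}(\sigma_{1})\), and the fact that \(\mathcal B_{1}\) and \(b_{1}\) involve only \(k_{1}\) and \(\sigma_{1}\). In contrast, \(R^{L}\) and \(\lambda\) may depend on \(y\); that dependence is carried entirely by \(\lambda_{1,y}\), which is why the last clause of the theorem is needed.
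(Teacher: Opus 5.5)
Your proposal is correct and follows essentially the same route as the paper's proof: identify the horizontal Lie derivative with $\mathcal L_{Z_{1}}g_{L,1}$, substitute Lemma~\ref{lem:base-lyra-ricci-splitting} into the horizontal equation to get $\operatorname{Ric}^{L,1}+\tfrac12\mathcal L_{Z_{1}}g_{L,1}=\Lambda g_{L,1}+n_{2}\mathcal B_{1}+n_{2}b_{1}g_{1}$, and then read off both the trace-free condition and the coefficient. A small correction to your closing remark: every other term in that identity depends only on the base point, so the identity forces $\Lambda=\lambda+\rho R^{L}$ to be fiber-independent. Hence the $y$-dependences of $\lambda$ and $\rho R^{L}$ cancel, and $\lambda_{1,y}$ is automatically independent of $y$; this does not affect the validity of your argument.
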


\begin{proof}
	Because \(\sigma=\sigma_{1}\), Equation
	\eqref{eq:lyra-lie-horizontal} becomes
	\begin{equation}
		(\mathcal L_{Z}g_{L})(X,Y)
		=
		(\mathcal L_{Z_{1}}g_{L,1})(X,Y).
		\label{eq:base-lie-inheritance}
	\end{equation}
	Using Lemma~\ref{lem:base-lyra-ricci-splitting} in the
	horizontal soliton equation gives
	\begin{align}
		\operatorname{Ric}^{L,1}
		+
		\frac{1}{2}
		\mathcal L_{Z_{1}}g_{L,1}
		={}&
		\left(
		\lambda+\rho R^{L}
		\right)g_{L,1}
		+
		n_{2}\mathcal B_{1}
		+
		n_{2}b_{1}g_{1}.
		\label{eq:base-induced-equation}
	\end{align}
	
	The left-hand side of \eqref{eq:base-induced-equation} defines
	an almost Ricci--Bourguignon soliton on the base precisely when
	the additional tensor on the right-hand side is proportional
	to \(g_{L,1}\), or equivalently to \(g_{1}\). Since
	\(b_{1}g_{1}\) is already pure trace, this is equivalent to
	\[
	\operatorname{tf}_{g_{1}}\mathcal B_{1}=0.
	\]
	This proves \eqref{eq:base-inheritance-condition}.
	
	If \(\mathcal B_{1}=\beta_{1}g_{1}\), then
	\eqref{eq:base-induced-equation} becomes
	\begin{align*}
		\operatorname{Ric}^{L,1}
		+
		\frac{1}{2}
		\mathcal L_{Z_{1}}g_{L,1}
		={}&
		\left[
		\lambda
		+
		\rho R^{L}
		+
		n_{2}e^{-2\sigma_{1}}
		\left(
		\beta_{1}+b_{1}
		\right)
		\right]g_{L,1}.
	\end{align*}
	Writing the coefficient as
	\(\lambda_{1,y}+\rho R^{L,1}\) yields
	\eqref{eq:base-inherited-soliton-function}.
\end{proof}

\begin{corollary}
	\label{cor:base-inheritance-vanishing}
	Under the hypotheses of Theorem~\ref{thm:base-inheritance},
	if
	\begin{equation}
		\operatorname{Hess}_{1}k_{1}
		+
		dk_{1}\otimes dk_{1}
		+
		\operatorname{Hess}_{1}\sigma_{1}
		-
		d\sigma_{1}\otimes d\sigma_{1}
		=
		0,
		\label{eq:base-defect-vanishing}
	\end{equation}
	then \(M_{1}\) inherits a Lyra almost
	Ricci--Bourguignon soliton with
	\begin{equation}
		\lambda_{1,y}
		=
		\lambda
		+
		\rho
		\left(
		R^{L}-R^{L,1}
		\right)
		+
		n_{2}e^{-2\sigma_{1}}b_{1}.
		\label{eq:base-inheritance-vanishing-function}
	\end{equation}
\end{corollary}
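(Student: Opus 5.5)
This corollary is the special case \(\beta_{1}=0\) of Theorem~\ref{thm:base-inheritance}, so the plan is to specialize that theorem rather than redo the computation. Under \eqref{eq:base-defect-vanishing} we have \(\mathcal B_{1}=0\) identically on each slice \(M_{1}\times\{y\}\). Hence \(\mathcal B_{1}=\beta_{1}g_{1}\) holds with \(\beta_{1}\equiv 0\), and in particular \(\operatorname{tf}_{g_{1}}\mathcal B_{1}=0\). The inheritance condition \eqref{eq:base-inheritance-condition} is therefore satisfied, and Theorem~\ref{thm:base-inheritance} shows that for each fixed \(y\in M_{2}\) the triple \((M_{1},g_{L,1},Z_{1})\) carries a Lyra almost Ricci--Bourguignon soliton with parameter \(\rho\).

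To identify the soliton function, I would substitute \(\beta_{1}=0\) into \eqref{eq:base-inherited-soliton-function}, which gives \eqref{eq:base-inheritance-vanishing-function} directly. As a check, I would read it off \eqref{eq:base-induced-equation} instead. With \(\mathcal B_{1}=0\), that equation reduces to
\[
\operatorname{Ric}^{L,1}+\tfrac12\mathcal L_{Z_{1}}g_{L,1}
=\bigl(\lambda+\rho R^{L}+n_{2}e^{-2\sigma_{1}}b_{1}\bigr)g_{L,1},
\]
using \(g_{1}=e^{-2\sigma_{1}}g_{L,1}\). Writing the coefficient as \(\lambda_{1,y}+\rho R^{L,1}\) then recovers the stated formula.

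There is no genuine obstacle here. The only point needing a word is that \(\lambda_{1,y}\) is a smooth function on the slice \(M_{1}\times\{y\}\). This is what the definition of an almost soliton requires, so no constancy in \(y\) is needed for the corollary as stated. A single \(y\)-independent structure would additionally require the fiber-independence condition from the last part of Theorem~\ref{thm:base-inheritance}.
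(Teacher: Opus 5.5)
Your proposal is correct and takes the paper's approach: the paper gives no separate proof of this corollary and treats it as the special case $\beta_{1}=0$ of Theorem~\ref{thm:base-inheritance}, which is exactly how you specialize it. Your cross-check via \eqref{eq:base-induced-equation} is sound, and so is your remark that $\lambda_{1,y}$ is only a slice-dependent function; both agree with the theorem's final clause.
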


\subsection{Inheritance by the fiber slices}
\label{subsec:fiber-inheritance}

We now consider the fiber-adapted setting
\begin{equation}
	\sigma=\sigma_{2},
	\qquad
	k=k_{1}+k_{2},
	\label{eq:fiber-adapted-data}
\end{equation}
where
\[
k_{1}\in C^{\infty}(M_{1}),
\qquad
k_{2},\sigma_{2}\in C^{\infty}(M_{2}).
\]
Set
\begin{equation}
	\omega_{2}
	=
	k_{2}+\sigma_{2},
	\qquad
	\widehat g_{2}
	=
	e^{2\omega_{2}}g_{2}.
	\label{eq:fiber-effective-metric}
\end{equation}
For each fixed \(x\in M_{1}\), the metric induced on the fiber
slice \(\{x\}\times M_{2}\) is
\begin{equation}
	h_{2,x}
	=
	e^{2k_{1}(x)}\widehat g_{2}.
	\label{eq:fiber-slice-metric}
\end{equation}

We use the notation
\begin{equation}
	dk_{2}\odot d\sigma_{2}
	=
	dk_{2}\otimes d\sigma_{2}
	+
	d\sigma_{2}\otimes dk_{2}.
	\label{eq:symmetric-product-definition}
\end{equation}
Define
\begin{align}
	\mathcal C_{2}
	={}&
	\operatorname{Hess}_{2}\sigma_{2}
	-
	dk_{2}\odot d\sigma_{2}
	-
	d\sigma_{2}\otimes d\sigma_{2},
	\label{eq:fiber-inheritance-tensor}
\end{align}
together with
\begin{equation}
	c_{2}
	=
	\left\langle
	\nabla_{2}k_{2},
	\nabla_{2}\sigma_{2}
	\right\rangle_{g_{2}}
	+
	\left\lvert
	\nabla_{2}\sigma_{2}
	\right\rvert_{g_{2}}^{2},
	\label{eq:fiber-inheritance-scalar}
\end{equation}
and
\begin{equation}
	a_{1}
	=
	\Delta_{1}k_{1}
	+
	n_{2}
	\left\lvert
	\nabla_{1}k_{1}
	\right\rvert_{g_{1}}^{2}.
	\label{eq:fiber-base-scalar}
\end{equation}

\begin{lemma}
	\label{lem:fiber-lyra-ricci-splitting}
	Under \eqref{eq:fiber-adapted-data}, the vertical component of
	the Lyra Ricci tensor satisfies
	\begin{align}
		\operatorname{Ric}^{L}(U,V)
		={}&
		\operatorname{Ric}_{\widehat g_{2}}(U,V)
		-
		n_{1}\mathcal C_{2}(U,V)
		\nonumber\\
		&-
		\left[
		n_{1}c_{2}
		+
		f^{2}a_{1}
		\right]
		g_{2}(U,V)
		\label{eq:fiber-lyra-ricci-splitting}
	\end{align}
	for all \(U,V\in\Gamma(\mathcal V)\).
\end{lemma}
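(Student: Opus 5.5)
The plan is to follow the proof of Lemma~\ref{lem:base-lyra-ricci-splitting}, with the roles of the factors interchanged. First I expand \(\operatorname{Ric}^{L}(U,V)\) using \eqref{eq:lyra-ricci-transformation}, with every ingredient made explicit under \eqref{eq:fiber-adapted-data}. Then I compare the result with the conformal transformation formula on the \(n_{2}\)-dimensional fiber for \(\widehat g_{2}=e^{2\omega_{2}}g_{2}\).

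\emph{Step 1: the pieces of \eqref{eq:lyra-ricci-transformation}.} Since \(k=k_{1}+k_{2}\), we have \(\operatorname{Hess}_{2}k=\operatorname{Hess}_{2}k_{2}\), \(\Delta_{1}k=\Delta_{1}k_{1}\), and so on. Hence \eqref{eq:twisted-ricci-vertical} becomes
\[
\operatorname{Ric}(U,V)=\operatorname{Ric}_{2}-(n_{2}-2)\left[\operatorname{Hess}_{2}k_{2}-dk_{2}\otimes dk_{2}\right]-\left[\Delta_{2}k_{2}+(n_{2}-2)\lvert\nabla_{2}k_{2}\rvert^{2}_{g_{2}}\right]g_{2}-f^{2}a_{1}g_{2}.
\]
Because \(\nabla_{1}\sigma_{2}=0\), the last term of \eqref{eq:twisted-hessian-vertical} drops out, leaving
\[
\operatorname{Hess}\sigma(U,V)=\operatorname{Hess}_{2}\sigma_{2}-dk_{2}\odot d\sigma_{2}+\left\langle\nabla_{2}k_{2},\nabla_{2}\sigma_{2}\right\rangle_{g_{2}}g_{2}.
\]
By \eqref{eq:twisted-laplacian} and \eqref{eq:twisted-gradient-norm}, and since \(g(U,V)=f^{2}g_{2}(U,V)\),
\[
\Theta_{\sigma}\,g(U,V)=\left[\Delta_{2}\sigma_{2}+(n_{2}-2)\left\langle\nabla_{2}k_{2},\nabla_{2}\sigma_{2}\right\rangle_{g_{2}}+(n-2)\lvert\nabla_{2}\sigma_{2}\rvert^{2}_{g_{2}}\right]g_{2}.
\]
Note that all powers of \(f\) cancel here. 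The only surviving \(f^{2}\) term is therefore \(-f^{2}a_{1}g_{2}\), which accounts for that term in \eqref{eq:fiber-lyra-ricci-splitting}.

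\emph{Step 2: the fiber formula.} The conformal formula on \(M_{2}\) gives
\[
\operatorname{Ric}_{\widehat g_{2}}=\operatorname{Ric}_{2}-(n_{2}-2)\left(\operatorname{Hess}_{2}\omega_{2}-d\omega_{2}\otimes d\omega_{2}\right)-\left[\Delta_{2}\omega_{2}+(n_{2}-2)\lvert\nabla_{2}\omega_{2}\rvert^{2}_{g_{2}}\right]g_{2}.
\]
I then expand this with \(\omega_{2}=k_{2}+\sigma_{2}\). The key observation is that
\[
\operatorname{Hess}_{2}\omega_{2}-d\omega_{2}\otimes d\omega_{2}=\operatorname{Hess}_{2}k_{2}-dk_{2}\otimes dk_{2}+\mathcal C_{2}.
\]
So the tensorial \(\sigma_{2}\)-part of \(\operatorname{Ric}_{\widehat g_{2}}\) is exactly \(-(n_{2}-2)\mathcal C_{2}\). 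The tensorial \(\sigma_{2}\)-part of \(\operatorname{Ric}^{L}(U,V)\) from Step 1 is \(-(n-2)\mathcal C_{2}\). The difference is therefore \(-n_{1}\mathcal C_{2}\).

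\emph{Step 3: the \(g_{2}\)-coefficients.} Subtracting the two \(g_{2}\)-coefficients, the Laplacian terms \(\Delta_{2}k_{2}\) and \(\Delta_{2}\sigma_{2}\) and the \(\lvert\nabla_{2}k_{2}\rvert^{2}\) term cancel. What remains is
\[
\bigl[-(n-2)-(n_{2}-2)+2(n_{2}-2)\bigr]\left\langle\nabla_{2}k_{2},\nabla_{2}\sigma_{2}\right\rangle_{g_{2}}+\bigl[-(n-2)+(n_{2}-2)\bigr]\lvert\nabla_{2}\sigma_{2}\rvert^{2}_{g_{2}}.
\]
In the first bracket, the \(-(n-2)\) comes from the \(\operatorname{Hess}\sigma\) trace term, the \(-(n_{2}-2)\) from \(\Theta_{\sigma}\), and the \(2(n_{2}-2)\) from the cross term in \(\lvert\nabla_{2}\omega_{2}\rvert^{2}\). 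Both brackets equal \(-n_{1}\), so this remainder is \(-n_{1}c_{2}\), which yields \eqref{eq:fiber-lyra-ricci-splitting}.

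The main obstacle is the bookkeeping of the mixed term \(\langle\nabla_{2}k_{2},\nabla_{2}\sigma_{2}\rangle\). It enters from three sources: the vertical Hessian \eqref{eq:twisted-hessian-vertical} multiplied by \((n-2)\), the twisted Laplacian inside \(\Theta_{\sigma}\), and the cross term of \(\lvert\nabla_{2}\omega_{2}\rvert^{2}\). Signs and coefficients must be tracked carefully so that these contributions combine to \(-n_{1}\). The symmetric term \(dk_{2}\odot d\sigma_{2}\) from \(-(n-2)\operatorname{Hess}\sigma\) must also be checked against the \(+(n_{2}-2)\,d\omega_{2}\otimes d\omega_{2}\) expansion.
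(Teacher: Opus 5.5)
Your proposal is correct and follows the same route as the paper. The paper also expands $\operatorname{Ric}^{L}(U,V)$ through the conformal formula, using the separated vertical Ricci tensor, the vertical Hessian of $\sigma_{2}$, and $f^{2}\Theta_{\sigma_{2}}$ (your Step 1 displays are exactly those three identities), and then compares with the conformal Ricci formula for $\widehat g_{2}=e^{2\omega_{2}}g_{2}$. Your Steps 2 and 3 carry out the coefficient bookkeeping that the paper leaves implicit, and both brackets correctly reduce to $-n_{1}$.
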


\begin{proof}
	Using \eqref{eq:twisted-ricci-vertical} with
	\(k=k_{1}+k_{2}\), we obtain
	\begin{align}
		\operatorname{Ric}(U,V)
		={}&
		\operatorname{Ric}_{2}(U,V)
		-
		(n_{2}-2)
		\left[
		\operatorname{Hess}_{2}k_{2}(U,V)
		-
		U(k_{2})V(k_{2})
		\right]
		\nonumber\\
		&-
		\left[
		\Delta_{2}k_{2}
		+
		(n_{2}-2)
		\lvert\nabla_{2}k_{2}\rvert_{g_{2}}^{2}
		+
		f^{2}a_{1}
		\right]g_{2}(U,V).
		\label{eq:fiber-ordinary-ricci-separated}
	\end{align}
	
	By \eqref{eq:twisted-hessian-vertical},
	\begin{align}
		\operatorname{Hess}\sigma_{2}(U,V)
		={}&
		\operatorname{Hess}_{2}\sigma_{2}(U,V)
		-
		U(k_{2})V(\sigma_{2})
		-
		V(k_{2})U(\sigma_{2})
		\nonumber\\
		&+
		\left\langle
		\nabla_{2}k_{2},
		\nabla_{2}\sigma_{2}
		\right\rangle_{g_{2}}
		g_{2}(U,V).
		\label{eq:fiber-scale-hessian}
	\end{align}
	Moreover, Equations~\eqref{eq:twisted-laplacian} and
	\eqref{eq:twisted-gradient-norm} imply
	\begin{align}
		f^{2}\Theta_{\sigma_{2}}
		={}&
		\Delta_{2}\sigma_{2}
		+
		(n_{2}-2)
		\left\langle
		\nabla_{2}k_{2},
		\nabla_{2}\sigma_{2}
		\right\rangle_{g_{2}}
		\nonumber\\
		&+
		(n-2)
		\lvert\nabla_{2}\sigma_{2}\rvert_{g_{2}}^{2}.
		\label{eq:fiber-theta-splitting}
	\end{align}
	
	On the other hand, the Ricci tensor of
	\(\widehat g_{2}=e^{2\omega_{2}}g_{2}\) is
	\begin{align}
		\operatorname{Ric}_{\widehat g_{2}}
		={}&
		\operatorname{Ric}_{2}
		-
		(n_{2}-2)
		\left(
		\operatorname{Hess}_{2}\omega_{2}
		-
		d\omega_{2}\otimes d\omega_{2}
		\right)
		\nonumber\\
		&-
		\left[
		\Delta_{2}\omega_{2}
		+
		(n_{2}-2)
		\lvert\nabla_{2}\omega_{2}\rvert_{g_{2}}^{2}
		\right]g_{2}.
		\label{eq:fiber-effective-ricci}
	\end{align}
	
	Substituting
	\eqref{eq:fiber-ordinary-ricci-separated}--
	\eqref{eq:fiber-theta-splitting} into
	\eqref{eq:lyra-ricci-transformation}, and then comparing the
	result with \eqref{eq:fiber-effective-ricci}, yields
	\eqref{eq:fiber-lyra-ricci-splitting}.
\end{proof}

\begin{theorem}[Fiber-slice inheritance criterion]
	\label{thm:fiber-inheritance}
	Assume \eqref{eq:fiber-adapted-data}. For each fixed
	\(x\in M_{1}\), the fiber slice
	\[
	\left(
	M_{2},
	h_{2,x},
	Z_{2}
	\right)
	\]
	inherits an almost Ricci--Bourguignon soliton structure with
	parameter \(\rho\) if and only if
	\begin{equation}
		\operatorname{tf}_{g_{2}}\mathcal C_{2}
		=
		0.
		\label{eq:fiber-inheritance-condition}
	\end{equation}
	
	If
	\begin{equation}
		\mathcal C_{2}
		=
		\gamma_{2}g_{2},
		\label{eq:fiber-inheritance-gamma}
	\end{equation}
	then the inherited soliton function is
	\begin{align}
		\lambda_{2,x}
		={}&
		\lambda
		+
		\rho
		\left(
		R^{L}-R_{h_{2,x}}
		\right)
		-
		Z_{1}(k_{1})
		\nonumber\\
		&+
		e^{-2\sigma_{2}}f^{-2}
		\left[
		n_{1}\gamma_{2}
		+
		n_{1}c_{2}
		+
		f^{2}a_{1}
		\right].
		\label{eq:fiber-inherited-soliton-function}
	\end{align}
\end{theorem}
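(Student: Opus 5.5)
The plan is to mirror the proof of Theorem~\ref{thm:base-inheritance}, replacing Lemma~\ref{lem:base-lyra-ricci-splitting} by Lemma~\ref{lem:fiber-lyra-ricci-splitting}. The first step is to rewrite the vertical Lie derivative in terms of the slice metric. Under \eqref{eq:fiber-adapted-data} we have $e^{2\sigma}f^{2}g_{2}=e^{2k_{1}}e^{2\omega_{2}}g_{2}=h_{2,x}$ on the slice $\{x\}\times M_{2}$. Moreover, $Z(\omega)=Z_{1}(k_{1})+Z_{2}(\omega_{2})$. Since $Z_{2}$ is tangent to the slice and $k_{1}(x)$ is constant there,
\[
\mathcal L_{Z_{2}}h_{2,x}=e^{2\sigma}f^{2}\left[\mathcal L_{Z_{2}}g_{2}+2Z_{2}(\omega_{2})g_{2}\right].
\]
Substituting this into \eqref{eq:lyra-lie-vertical} gives
\[
(\mathcal L_{Z}g_{L})(U,V)=(\mathcal L_{Z_{2}}h_{2,x})(U,V)+2Z_{1}(k_{1})\,h_{2,x}(U,V).
\]

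The second step is to identify the Ricci tensor of the slice. Because $h_{2,x}$ is a constant multiple of $\widehat g_{2}$, we have $\operatorname{Ric}_{h_{2,x}}=\operatorname{Ric}_{\widehat g_{2}}$. Inserting Lemma~\ref{lem:fiber-lyra-ricci-splitting} and the expression above into the vertical part of \eqref{eq:lyra-arbs-compact}, and using $g_{L}(U,V)=h_{2,x}(U,V)$, we obtain on each slice
\[
\operatorname{Ric}_{h_{2,x}}+\tfrac12\mathcal L_{Z_{2}}h_{2,x}
=\left(\lambda+\rho R^{L}-Z_{1}(k_{1})\right)h_{2,x}+n_{1}\mathcal C_{2}+\left[n_{1}c_{2}+f^{2}a_{1}\right]g_{2}.
\]

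The third step is the trace-free argument. The left-hand side is an almost Ricci--Bourguignon soliton expression for $(M_{2},h_{2,x},Z_{2})$ exactly when the right-hand side is proportional to $h_{2,x}$, equivalently to $g_{2}$. The terms in $h_{2,x}$ and $g_{2}$ are already pure trace, so the only obstruction is $n_{1}\operatorname{tf}_{g_{2}}\mathcal C_{2}$. Assuming $n_{1}\ge1$, this yields \eqref{eq:fiber-inheritance-condition}.

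When $\mathcal C_{2}=\gamma_{2}g_{2}$, we convert $g_{2}=e^{-2\sigma_{2}}f^{-2}h_{2,x}$. Writing the resulting coefficient as $\lambda_{2,x}+\rho R_{h_{2,x}}$ then yields \eqref{eq:fiber-inherited-soliton-function}, in the same way that \eqref{eq:base-inherited-soliton-function} was obtained.

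The main obstacle is the bookkeeping in the first step. One must check that the $Z_{2}(\sigma_{2})$ and $Z_{2}(k_{2})$ contributions are absorbed exactly into $\mathcal L_{Z_{2}}h_{2,x}$, leaving only the residual $-Z_{1}(k_{1})$ shift. This depends on $Z_{1}$ being the lift of a base field, so that $Z_{1}(\omega)=Z_{1}(k_{1})$. I would verify this directly from the definition of the Lie derivative on the product before assembling the final expression.
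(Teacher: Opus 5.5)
Your proposal is correct and follows the paper's proof essentially step for step. Like the paper, you rewrite the vertical Lie derivative as $\mathcal L_{Z_2}h_{2,x}+2Z_1(k_1)h_{2,x}$, insert Lemma~\ref{lem:fiber-lyra-ricci-splitting} together with $\operatorname{Ric}_{h_{2,x}}=\operatorname{Ric}_{\widehat g_2}$, isolate $n_1\operatorname{tf}_{g_2}\mathcal C_2$ as the only non-pure-trace term, and convert $g_2=e^{-2\sigma_2}f^{-2}h_{2,x}$ to read off $\lambda_{2,x}$. Your explicit check that $Z(\omega)=Z_1(k_1)+Z_2(\omega_2)$, and your remark that $n_1\ge 1$ is needed for the ``only if'' direction, are details the paper leaves implicit.
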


\begin{proof}
	By \eqref{eq:lyra-lie-vertical} and
	\eqref{eq:fiber-slice-metric}, we have
	\begin{align}
		(\mathcal L_{Z}g_{L})(U,V)
		={}&
		(\mathcal L_{Z_{2}}h_{2,x})(U,V)
		+
		2Z_{1}(k_{1})h_{2,x}(U,V).
		\label{eq:fiber-lie-inheritance}
	\end{align}
	Using Lemma~\ref{lem:fiber-lyra-ricci-splitting} in the
	vertical part of \eqref{eq:lyra-arbs-equation} yields
	\begin{align}
		\operatorname{Ric}_{h_{2,x}}
		+
		\frac{1}{2}
		\mathcal L_{Z_{2}}h_{2,x}
		={}&
		\left[
		\lambda+\rho R^{L}
		-
		Z_{1}(k_{1})
		\right]h_{2,x}
		\nonumber\\
		&+
		n_{1}\mathcal C_{2}
		+
		\left[
		n_{1}c_{2}
		+
		f^{2}a_{1}
		\right]g_{2}.
		\label{eq:fiber-induced-equation}
	\end{align}
	Here we used the fact that
	\[
	\operatorname{Ric}_{h_{2,x}}
	=
	\operatorname{Ric}_{\widehat g_{2}},
	\]
	because \(h_{2,x}\) differs from \(\widehat g_{2}\) by a
	constant homothety on each fixed fiber slice.
	
	The last term in \eqref{eq:fiber-induced-equation} is already
	proportional to \(g_{2}\), and hence to \(h_{2,x}\). Therefore,
	the right-hand side is proportional to \(h_{2,x}\) if and only
	if
	\[
	\operatorname{tf}_{g_{2}}\mathcal C_{2}=0.
	\]
	If \(\mathcal C_{2}=\gamma_{2}g_{2}\), then
	\[
	g_{2}
	=
	e^{-2\sigma_{2}}f^{-2}h_{2,x}.
	\]
	Substitution into \eqref{eq:fiber-induced-equation} and
	comparison with
	\[
	\operatorname{Ric}_{h_{2,x}}
	+
	\frac{1}{2}\mathcal L_{Z_{2}}h_{2,x}
	=
	\left(
	\lambda_{2,x}
	+
	\rho R_{h_{2,x}}
	\right)h_{2,x}
	\]
	gives \eqref{eq:fiber-inherited-soliton-function}.
\end{proof}

\begin{remark}
	\label{rem:genuine-fiber-inheritance}
	Theorem~\ref{thm:fiber-inheritance} gives an inherited structure
	on each fiber slice. If \(k_{1}\) is constant, then all the
	metrics \(h_{2,x}\) differ by the same constant homothety, and
	the result determines a single intrinsic soliton structure on
	the fiber manifold. If \(k_{1}\) is nonconstant, the inherited
	structure is naturally a family parameterized by the base.
\end{remark}

\subsection{Rigidity of the mixed soliton equation}
\label{subsec:mixed-rigidity}

The mixed equation
\eqref{eq:lyra-arbs-mixed} may be written as
\begin{align}
	(n_{2}-1)XU(k)
	+
	(n-2)
	\left[
	XU(\sigma)
	-
	X(k)U(\sigma)
	-
	X(\sigma)U(\sigma)
	\right]
	=
	0.
	\label{eq:mixed-rigidity-equation}
\end{align}
Unlike the horizontal and vertical equations,
\eqref{eq:mixed-rigidity-equation} does not involve
\(\lambda\), \(\rho\), or the projectable potential field. It is
therefore an intrinsic condition on the pair \((f,\varphi)\).

\begin{theorem}[Base-dependent scales preserve ordinary rigidity]
	\label{thm:base-scale-rigidity}
	Assume
	\[
	n_{2}>1
	\]
	and suppose that the Lyra scale is base-dependent,
	\begin{equation}
		\sigma=\sigma_{1}.
		\label{eq:base-dependent-scale}
	\end{equation}
	Then every Lyra almost Ricci--Bourguignon soliton satisfies
	\begin{equation}
		XU(k)=0
		\label{eq:ordinary-mixed-separation}
	\end{equation}
	for all horizontal \(X\) and vertical \(U\).
	
	Consequently, on every connected product neighborhood,
	\begin{equation}
		k=k_{1}+k_{2},
		\label{eq:additive-separation-k}
	\end{equation}
	and hence
	\begin{equation}
		f=f_{1}f_{2}.
		\label{eq:multiplicative-separation-f}
	\end{equation}
\end{theorem}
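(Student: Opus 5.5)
The argument starts from the mixed equation \eqref{eq:lyra-arbs-mixed} of Theorem~\ref{thm:lyra-arbs-component-characterization}. By Remark~\ref{rem:mixed-scale-twisting}, every Lyra almost Ricci--Bourguignon soliton with projectable potential must satisfy it, and it does not involve $\lambda$, $\rho$ or $Z$. The base-dependence assumption \eqref{eq:base-dependent-scale} is then substituted directly. Since $\sigma=\sigma_{1}$ depends only on the point of $M_{1}$, and $U$ is the canonical lift of a vector field on $M_{2}$, we have $U(\sigma)=0$ identically. Because $[X,U]=0$ for canonical lifts, we also get $XU(\sigma)=X(U(\sigma))=0$. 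Every $\sigma$-term in the bracket of \eqref{eq:lyra-arbs-mixed} therefore vanishes: $XU(\sigma)$, $X(k)U(\sigma)$ and $X(\sigma)U(\sigma)$ are all zero. The mixed equation reduces to $(n_{2}-1)XU(k)=0$, and the hypothesis $n_{2}>1$ gives \eqref{eq:ordinary-mixed-separation}.

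Next I pass from the vanishing mixed derivatives to the additive splitting \eqref{eq:additive-separation-k}. On a connected product neighborhood $W_{1}\times W_{2}$, with coordinates $(x^{i})$ on $W_{1}$ and $(y^{a})$ on $W_{2}$, I take $X=\partial_{x^{i}}$ and $U=\partial_{y^{a}}$. These are canonical lifts, so \eqref{eq:ordinary-mixed-separation} gives $\partial_{x^{i}}\partial_{y^{a}}k=0$ for all $i,a$. Fix a base point $(x_{0},y_{0})$ and set
\[
k_{1}(x)=k(x,y_{0}),
\qquad
k_{2}(y)=k(x_{0},y)-k(x_{0},y_{0}).
\]
The function $h=k-k_{1}-k_{2}$ satisfies $\partial_{y^{a}}h(x,y)=\partial_{y^{a}}k(x,y)-\partial_{y^{a}}k(x_{0},y)$. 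This quantity is independent of $x$, since its $x$-derivative is $\partial_{x^{i}}\partial_{y^{a}}k=0$, and it vanishes at $x=x_{0}$. Hence $h$ is constant in $y$ for each fixed $x$. Also $h(x,y_{0})=0$, so $h\equiv0$ on the neighborhood; this step uses connectedness of the factors, say taking them to be coordinate balls. Exponentiating $k=\ln f$ then gives \eqref{eq:multiplicative-separation-f} with $f_{1}=e^{k_{1}}$ and $f_{2}=e^{k_{2}}$.

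There is no real obstacle here; the argument is a direct reduction. The only points that need care are the justification of $XU(\sigma)=0$, which relies on using canonical lifts (for which $[X,U]=0$) rather than arbitrary horizontal and vertical fields, and the connectedness needed in the integration step.
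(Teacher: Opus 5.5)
Your proposal is correct and follows the paper's proof. You substitute $\sigma=\sigma_1$ into the mixed equation \eqref{eq:lyra-arbs-mixed}, which kills every $\sigma$-term; $n_2>1$ then gives $XU(k)=0$, and you integrate. Your explicit choice $k_1=k(\cdot,y_0)$, $k_2=k(x_0,\cdot)-k(x_0,y_0)$ is a concrete version of the paper's observation that $d_1k$ is independent of the fiber variable. The appeal to $[X,U]=0$ is unnecessary, since $XU(\sigma)=X(U(\sigma))=X(0)=0$ directly.
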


\begin{proof}
	Under \eqref{eq:base-dependent-scale},
	\[
	U(\sigma)=0,
	\qquad
	XU(\sigma)=0.
	\]
	Therefore, Equation~\eqref{eq:mixed-rigidity-equation} reduces
	to
	\[
	(n_{2}-1)XU(k)=0.
	\]
	Since \(n_{2}>1\), Equation
	\eqref{eq:ordinary-mixed-separation} follows.
	
	The vanishing of all mixed derivatives implies that the
	horizontal differential \(d_{1}k\) is independent of the fiber
	variable. Hence, locally,
	\[
	d_{1}k=d k_{1}
	\]
	for some function \(k_{1}\) on \(M_{1}\). It follows that
	\(k-k_{1}\) is constant along \(M_{1}\), and therefore depends
	only on \(M_{2}\). Thus,
	\[
	k=k_{1}+k_{2}.
	\]
	Exponentiating gives
	\[
	f=e^{k_{1}}e^{k_{2}}=f_{1}f_{2}.
	\]
\end{proof}

\begin{corollary}
	\label{cor:ordinary-rigidity-recovery}
	If the Lyra scale is constant and \(n_{2}>1\), then every Lyra
	almost Ricci--Bourguignon soliton reduces locally to a
	multiplicatively separable twisted product.
\end{corollary}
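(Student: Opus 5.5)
This is a direct specialization of Theorem~\ref{thm:base-scale-rigidity}, so the plan is to reduce to it rather than repeat the computation. A constant Lyra scale \(\varphi\equiv c>0\) gives \(\sigma=\ln c\), a constant function. In particular it is trivially base-dependent, \(\sigma=\sigma_{1}\) with \(\sigma_{1}\equiv\ln c\) regarded as a function on \(M_{1}\). So the hypothesis \eqref{eq:base-dependent-scale} holds, and together with \(n_{2}>1\) all assumptions of Theorem~\ref{thm:base-scale-rigidity} are met.

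As a check independent of that theorem, I would also verify directly from the mixed equation \eqref{eq:lyra-arbs-mixed} of Theorem~\ref{thm:lyra-arbs-component-characterization}. With \(\sigma\) constant,
\[
X(\sigma)=U(\sigma)=XU(\sigma)=0,
\]
so the whole bracket multiplied by \((n-2)\) vanishes. The equation then reads \((n_{2}-1)XU(k)=0\), and since \(n_{2}>1\) this gives \(XU(k)=0\) for all horizontal \(X\) and vertical \(U\). This is exactly the ordinary obstruction already noted in Remark~\ref{rem:mixed-scale-twisting}.

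Next I would pass from the vanishing of mixed derivatives to separation, following the argument in the proof of Theorem~\ref{thm:base-scale-rigidity}. On a connected product neighborhood \(U_{1}\times U_{2}\), take \(U_{2}\) connected. The horizontal differential \(d_{1}k\) is then independent of the fiber point, so \(k_{1}(x):=k(x,y_{0})\) satisfies \(d_{1}(k-k_{1})=0\). Hence \(k-k_{1}=:k_{2}\) depends only on \(y\), which gives \(k=k_{1}+k_{2}\) and \(f=f_{1}f_{2}\) with \(f_{i}=e^{k_{i}}\).

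Finally, I would record the interpretation. The metric \(g_{1}\oplus f_{1}^{2}f_{2}^{2}g_{2}\) is a twisted product with multiplicatively separable twisting function. Absorbing \(f_{2}^{2}\) into \(g_{2}\) turns it into a warped product, and \(g_{L}=c^{2}g\) is only a constant homothety of it.

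The only point needing care is the word "locally". Connectedness of the fiber factor of the product neighborhood is what makes \(k-k_{1}\) a genuine function of \(y\) alone, so the conclusion is local unless the factors are connected and simply arranged. No real obstacle is expected, since everything is inherited from Theorem~\ref{thm:base-scale-rigidity}.
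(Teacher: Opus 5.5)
Your proposal is correct and follows the paper's argument: a constant scale is trivially base-dependent, so Theorem~\ref{thm:base-scale-rigidity} applies directly. Your direct check via \eqref{eq:lyra-arbs-mixed} and your separation step simply re-derive that theorem's proof. One small precision: the step from \(d_{1}(k-k_{1})=0\) to ``\(k-k_{1}\) depends only on \(y\)'' uses connectedness of the base factor \(U_{1}\), while connectedness of \(U_{2}\) is what makes \(d_{1}k\) independent of the fiber point. So both factors must be connected (automatic for a connected product neighborhood), not just the fiber factor as your last paragraph suggests.
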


\begin{proof}
	A constant Lyra scale is a special case of
	Theorem~\ref{thm:base-scale-rigidity}.
\end{proof}

The preceding theorem shows that a Lyra scale depending only on
the base cannot preserve a genuinely nonseparable twisting
function. A different phenomenon occurs when the Lyra scale
varies along the fiber.

\begin{theorem}[Fiber-scale compensation normal form]
	\label{thm:fiber-scale-compensation}
	Assume
	\[
	n_{2}>1
	\]
	and let the Lyra scale depend only on the fiber:
	\begin{equation}
		\sigma=\sigma_{2}.
		\label{eq:fiber-dependent-scale}
	\end{equation}
	Set
	\begin{equation}
		\alpha
		=
		\frac{n-2}{n_{2}-1}.
		\label{eq:compensation-exponent}
	\end{equation}
	Then the mixed soliton equation is equivalent to
	\begin{equation}
		U\!\left(
		e^{-\alpha\sigma_{2}}X(k)
		\right)
		=
		0
		\label{eq:weighted-horizontal-derivative}
	\end{equation}
	for all horizontal \(X\) and vertical \(U\).
	
	Consequently, on every simply connected product neighborhood,
	the twisting function has the local normal form
	\begin{equation}
		k(x,y)
		=
		e^{\alpha\sigma_{2}(y)}k_{1}(x)
		+
		k_{2}(y),
		\label{eq:compensated-k-normal-form}
	\end{equation}
	or equivalently,
	\begin{equation}
		f(x,y)
		=
		f_{2}(y)
		\exp
		\left(
		e^{\alpha\sigma_{2}(y)}k_{1}(x)
		\right).
		\label{eq:compensated-f-normal-form}
	\end{equation}
\end{theorem}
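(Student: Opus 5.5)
Since $\sigma=\sigma_{2}$ depends only on the fiber, the plan is to substitute this into the mixed equation \eqref{eq:mixed-rigidity-equation} and recognise the result as a total $U$-derivative after multiplying by an integrating factor. For horizontal $X$ we have $X(\sigma)=0$. Because the lifts commute, $[X,U]=0$, we also get $XU(\sigma)=UX(\sigma)=0$. So \eqref{eq:mixed-rigidity-equation} reduces to
\[
(n_{2}-1)XU(k)-(n-2)X(k)U(\sigma_{2})=0 .
\]
Dividing by $n_{2}-1\neq 0$ and using the exponent $\alpha$ from \eqref{eq:compensation-exponent}, this reads $UX(k)-\alpha\,U(\sigma_{2})\,X(k)=0$, where again $XU=UX$ has been used. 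Note that the factor $X(\sigma)U(\sigma)$ drops because $X(\sigma)=0$, not because of the fiber structure.

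The key identity is the product rule
\[
U\!\left(e^{-\alpha\sigma_{2}}X(k)\right)
=e^{-\alpha\sigma_{2}}\left[UX(k)-\alpha\,U(\sigma_{2})X(k)\right].
\]
Since $e^{-\alpha\sigma_{2}}>0$, the mixed equation is equivalent to \eqref{eq:weighted-horizontal-derivative}. Both directions of the equivalence are immediate from this identity. Combined with Theorem~\ref{thm:lyra-arbs-component-characterization}, this gives the first assertion.

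For the normal form, I would work on a connected product neighbourhood $U_{1}\times U_{2}$. Equation \eqref{eq:weighted-horizontal-derivative} holds for all lifted $X$ and all vertical $U$. It therefore says that the $y$-dependent one-form $e^{-\alpha\sigma_{2}(y)}\,d_{1}k(\cdot,y)$ on $U_{1}$ is independent of $y$, since $U_{2}$ is connected. Fix $y_{0}\in U_{2}$ and define
\[
k_{1}(x)=e^{-\alpha\sigma_{2}(y_{0})}k(x,y_{0}).
\]
Then $d_{1}k(\cdot,y)=e^{\alpha\sigma_{2}(y)}dk_{1}$ for every $y$. Hence $k-e^{\alpha\sigma_{2}(y)}k_{1}(x)$ has vanishing horizontal differential. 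On the connected $U_{1}$ it is therefore a function $k_{2}(y)$ alone, which is \eqref{eq:compensated-k-normal-form}. Exponentiating with $f=e^{k}$ and $f_{2}=e^{k_{2}}$ gives \eqref{eq:compensated-f-normal-form}.

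This construction needs no closedness argument, because $k_{1}$ is defined directly from $k$. So simple connectivity is more than is needed; connectedness of each factor suffices. The only delicate step is keeping track of the commutation $[X,U]=0$ of canonical lifts, which lets us swap $XU$ and $UX$ both when killing $XU(\sigma)$ and in the product rule. I do not expect any genuine obstacle beyond this bookkeeping.
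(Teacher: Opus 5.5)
Your proof is correct. Its first half (killing $X(\sigma)$ and $XU(\sigma)$, dividing by $n_{2}-1$, and recognising $e^{-\alpha\sigma_{2}}$ as an integrating factor) is exactly the paper's argument.

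The integration step differs. The paper writes $d_{1}k=e^{\alpha\sigma_{2}}\eta_{1}$ for a one-form $\eta_{1}$ on $M_{1}$, argues that $\eta_{1}$ is closed, and then invokes simple connectivity to obtain a primitive $k_{1}$. You instead set $k_{1}=e^{-\alpha\sigma_{2}(y_{0})}k(\cdot,y_{0})$ directly. This shows that $\eta_{1}$ is exact from the outset, being a constant multiple of $d_{1}k(\cdot,y_{0})$.

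Your version therefore gives a slightly stronger statement at no extra cost. The normal form \eqref{eq:compensated-k-normal-form} holds on any product neighbourhood $U_{1}\times U_{2}$ with both factors connected, so the simple-connectivity hypothesis in the theorem is superfluous. The paper's closedness and Poincar\'e-lemma step is harmless but unnecessary.
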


\begin{proof}
	Under \eqref{eq:fiber-dependent-scale},
	\[
	X(\sigma_{2})=0,
	\qquad
	XU(\sigma_{2})=0.
	\]
	Hence \eqref{eq:mixed-rigidity-equation} becomes
	\begin{equation}
		(n_{2}-1)U(X(k))
		-
		(n-2)X(k)U(\sigma_{2})
		=
		0.
		\label{eq:fiber-scale-mixed-reduced}
	\end{equation}
	Dividing by \(n_{2}-1\) and using
	\eqref{eq:compensation-exponent}, we obtain
	\[
	U(X(k))
	-
	\alpha X(k)U(\sigma_{2})
	=
	0.
	\]
	Therefore,
	\begin{align*}
		U\!\left(
		e^{-\alpha\sigma_{2}}X(k)
		\right)
		&=
		e^{-\alpha\sigma_{2}}
		\left[
		U(X(k))
		-
		\alpha X(k)U(\sigma_{2})
		\right]
		\\
		&=0,
	\end{align*}
	which proves \eqref{eq:weighted-horizontal-derivative}.
	
	It follows that
	\[
	e^{-\alpha\sigma_{2}}d_{1}k
	\]
	is independent of the fiber variable. Hence there exists a
	one-form \(\eta_{1}\) on \(M_{1}\) such that
	\begin{equation}
		d_{1}k
		=
		e^{\alpha\sigma_{2}}\eta_{1}.
		\label{eq:horizontal-k-one-form}
	\end{equation}
	Since \(d_{1}k\) is exact and
	\(e^{\alpha\sigma_{2}}\) is constant along the base,
	\(\eta_{1}\) is closed. On a simply connected neighborhood,
	there exists \(k_{1}\in C^{\infty}(M_{1})\) such that
	\[
	\eta_{1}=dk_{1}.
	\]
	Equation~\eqref{eq:horizontal-k-one-form} then gives
	\[
	d_{1}
	\left(
	k-e^{\alpha\sigma_{2}}k_{1}
	\right)
	=
	0.
	\]
	Therefore,
	\[
	k-e^{\alpha\sigma_{2}}k_{1}
	=
	k_{2}
	\]
	for some function \(k_{2}\) on \(M_{2}\). This proves
	\eqref{eq:compensated-k-normal-form}. Exponentiation gives
	\eqref{eq:compensated-f-normal-form}.
\end{proof}

\begin{remark}
	\label{rem:genuine-compensation}
	If \(\sigma_{2}\) is nonconstant and \(k_{1}\) is nonconstant,
	then \eqref{eq:compensated-k-normal-form} is generally not of
	the additive form \(k=k_{1}+k_{2}\). Therefore,
	\eqref{eq:compensated-f-normal-form} provides a genuinely
	twisted structure that is permitted by the Lyra scale but
	excluded in the constant-scale case. This is the
	\emph{scale--twisting compensation phenomenon}.
\end{remark}

\begin{corollary}
	\label{cor:fiber-scale-constant-reduction}
	Under the hypotheses of
	Theorem~\ref{thm:fiber-scale-compensation}, if
	\(\sigma_{2}\) is constant, then
	\eqref{eq:compensated-k-normal-form} reduces, after rescaling
	\(k_{1}\), to
	\[
	k=k_{1}+k_{2}.
	\]
\end{corollary}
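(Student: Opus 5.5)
Since \(\sigma_{2}\) is constant, the weight \(e^{\alpha\sigma_{2}}\) in Theorem~\ref{thm:fiber-scale-compensation} is a positive constant. The corollary then comes from absorbing this constant into the base function. There are two equivalent routes, and I will record both.

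The first route goes through the normal form. By Theorem~\ref{thm:fiber-scale-compensation}, on a simply connected product neighborhood we have
\[
k(x,y)=e^{\alpha\sigma_{2}(y)}k_{1}(x)+k_{2}(y).
\]
If \(\sigma_{2}\equiv c\) is constant, set \(C=e^{\alpha c}>0\) and define the rescaled function \(\tilde k_{1}=C\,k_{1}\), which is still in \(C^{\infty}(M_{1})\). Then
\[
k=\tilde k_{1}+k_{2},
\]
and this is the additive form after renaming \(\tilde k_{1}\) as \(k_{1}\). Exponentiating gives \(f=f_{1}f_{2}\) with \(f_{1}=e^{\tilde k_{1}}\), in agreement with Theorem~\ref{thm:base-scale-rigidity}.

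The second route is a consistency check that avoids the normal form altogether. With \(\sigma_{2}\) constant, we have \(U(\sigma_{2})=0\). The weighted relation \eqref{eq:weighted-horizontal-derivative} then becomes \(e^{-\alpha c}\,U(X(k))=0\), which is \(XU(k)=0\). A constant scale is a special case of a base-dependent scale, so this is exactly the conclusion of Theorem~\ref{thm:base-scale-rigidity} and Corollary~\ref{cor:ordinary-rigidity-recovery}.

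The only point needing care is the rescaling. It is legitimate only because \(\alpha\sigma_{2}\) is constant on the whole connected product neighborhood, and not merely constant along the base. The decomposition \(k=k_{1}+k_{2}\) is unique only up to adding a constant to one summand and subtracting it from the other, so the statement is understood modulo that normalization.
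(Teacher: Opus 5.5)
Your proposal is correct and matches the paper, which states this corollary without a written proof: the intended argument is that a constant $\sigma_{2}$ makes $e^{\alpha\sigma_{2}}$ a positive constant, which is absorbed into $k_{1}$, exactly as in your first route. Your second route, reducing the weighted relation to $XU(k)=0$, is a valid cross-check and amounts to Corollary~\ref{cor:ordinary-rigidity-recovery}.
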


\subsection{Rigidity under separated scale and twisting data}
\label{subsec:separated-rigidity}

We next suppose that both logarithmic functions are additively
separated:
\begin{equation}
	k=k_{1}+k_{2},
	\qquad
	\sigma=\sigma_{1}+\sigma_{2}.
	\label{eq:fully-separated-data}
\end{equation}

\begin{theorem}[Scale--twisting compensation law]
	\label{thm:scale-twisting-compensation-law}
	Assume \(n>2\) and
	\eqref{eq:fully-separated-data}. Then the mixed soliton
	equation is equivalent to
	\begin{equation}
		X(k_{1}+\sigma_{1})\,U(\sigma_{2})
		=
		0
		\label{eq:separated-compensation-equation}
	\end{equation}
	for all
	\[
	X\in\Gamma(TM_{1}),
	\qquad
	U\in\Gamma(TM_{2}).
	\]
	
	Consequently, on every connected open set on which
	\(d\sigma_{2}\neq0\),
	\begin{equation}
		d(k_{1}+\sigma_{1})=0.
		\label{eq:base-compensation-differential}
	\end{equation}
	Equivalently,
	\begin{equation}
		k_{1}+\sigma_{1}
		=
		C,
		\label{eq:base-compensation-constant}
	\end{equation}
	and hence
	\begin{equation}
		f_{1}\varphi_{1}
		=
		e^{C}.
		\label{eq:scale-twisting-product-constant}
	\end{equation}
\end{theorem}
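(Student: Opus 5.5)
We substitute the separated data \eqref{eq:fully-separated-data} directly into the mixed equation \eqref{eq:mixed-rigidity-equation} and use the product structure to kill every term that involves a genuine mixed second derivative.

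\textbf{Reduction of the mixed equation.} Since \(k_{1},\sigma_{1}\) live on \(M_{1}\) and \(k_{2},\sigma_{2}\) on \(M_{2}\), and the horizontal and vertical lifts satisfy \([X,U]=0\), we have
\[
XU(k)=0,\qquad XU(\sigma)=0,\qquad X(k)=X(k_{1}),\qquad X(\sigma)=X(\sigma_{1}),\qquad U(\sigma)=U(\sigma_{2}).
\]
Inserting these into \eqref{eq:mixed-rigidity-equation}, the \((n_{2}-1)XU(k)\) term and the \(XU(\sigma)\) term both drop out. What remains is
\[
-(n-2)\bigl[X(k_{1})+X(\sigma_{1})\bigr]U(\sigma_{2})=0 .
\]
Because \(n>2\), we may divide by \(-(n-2)\). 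This gives exactly \eqref{eq:separated-compensation-equation}, and every step is reversible, so the two equations are equivalent. Note that, unlike in Theorem~\ref{thm:base-scale-rigidity}, no assumption on \(n_{2}\) is needed here, because the \(n_{2}\)-term vanishes identically.

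\textbf{Pointwise consequence.} Let \(W\) be a connected open set on which \(d\sigma_{2}\neq0\). At a point \((x,y)\in W\), we have \(d\sigma_{2}(y)\neq0\), so we can choose a vertical \(U\) with \(U(\sigma_{2})(y)\neq0\). Then \eqref{eq:separated-compensation-equation} forces \(X(k_{1}+\sigma_{1})(x)=0\) for every horizontal \(X\). Hence \(d(k_{1}+\sigma_{1})=0\) at the base point \(x\), which is \eqref{eq:base-compensation-differential}.

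\textbf{From vanishing differential to a constant.} The vanishing of \(d(k_{1}+\sigma_{1})\) holds at every point of \(\pi_{1}(W)\), where \(\pi_{1}\) is the projection onto \(M_{1}\). The set \(\pi_{1}(W)\) is open and connected, being the continuous open image of a connected open set. A function on \(M_{1}\) with zero differential on a connected open set is constant there, so \(k_{1}+\sigma_{1}=C\) on \(\pi_{1}(W)\), and therefore on \(W\). This is \eqref{eq:base-compensation-constant}. Exponentiating, with \(f_{1}=e^{k_{1}}\) and \(\varphi_{1}=e^{\sigma_{1}}\), gives \(f_{1}\varphi_{1}=e^{C}\), which is \eqref{eq:scale-twisting-product-constant}.

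The only step needing care is this passage from \(W\subset M\) to a connected subset of the base. The derivative condition is a statement about \(M_{1}\) alone, so connectedness of \(\pi_{1}(W)\) is exactly what is needed, and it follows as stated above. Everything else is a direct substitution.
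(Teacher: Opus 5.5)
Your proposal is correct and follows the paper's proof essentially line by line: you substitute the separated data into the mixed equation, divide by $n-2$, and choose a vertical $U$ with $U(\sigma_{2})\neq0$ at each point. Your extra step of passing to the connected open set $\pi_{1}(W)\subset M_{1}$ spells out something the paper leaves implicit; equally, $k_{1}+\sigma_{1}$, viewed as a function on $M$, has vanishing differential on the connected set $W$ and is therefore constant there.
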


\begin{proof}
	Under \eqref{eq:fully-separated-data},
	\[
	XU(k)=0,
	\qquad
	XU(\sigma)=0,
	\]
	while
	\[
	X(k)=X(k_{1}),
	\qquad
	X(\sigma)=X(\sigma_{1}),
	\qquad
	U(\sigma)=U(\sigma_{2}).
	\]
	Therefore, Equation~\eqref{eq:mixed-rigidity-equation} reduces
	to
	\begin{align*}
		0
		&=
		-(n-2)
		\left[
		X(k_{1})U(\sigma_{2})
		+
		X(\sigma_{1})U(\sigma_{2})
		\right]
		\\
		&=
		-(n-2)
		X(k_{1}+\sigma_{1})U(\sigma_{2}).
	\end{align*}
	Since \(n>2\), this is equivalent to
	\eqref{eq:separated-compensation-equation}.
	
	If \(d\sigma_{2}\neq0\) on a connected open set, then at each
	point there exists a vertical field \(U\) satisfying
	\[
	U(\sigma_{2})\neq0.
	\]
	Equation~\eqref{eq:separated-compensation-equation} therefore
	implies
	\[
	X(k_{1}+\sigma_{1})=0
	\]
	for every horizontal field \(X\). Hence
	\[
	d(k_{1}+\sigma_{1})=0,
	\]
	which proves \eqref{eq:base-compensation-differential} and
	\eqref{eq:base-compensation-constant}. Exponentiating gives
	\[
	e^{k_{1}}e^{\sigma_{1}}
	=
	f_{1}\varphi_{1}
	=
	e^{C}.
	\]
\end{proof}

\begin{corollary}[Rigidity alternative]
	\label{cor:scale-twisting-rigidity-alternative}
	Under the hypotheses of
	Theorem~\ref{thm:scale-twisting-compensation-law}, at least one
	of the following alternatives holds locally:
	\begin{enumerate}
		\item the fiber part of the Lyra scale is constant,
		\[
		d\sigma_{2}=0;
		\]
		\item the base twisting and scale functions compensate each
		other,
		\[
		f_{1}\varphi_{1}=\mathrm{constant}.
		\]
	\end{enumerate}
\end{corollary}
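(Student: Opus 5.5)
The corollary is a pointwise dichotomy extracted from Theorem~\ref{thm:scale-twisting-compensation-law}. The plan is therefore to use the factorized mixed equation \eqref{eq:separated-compensation-equation} and split into cases according to whether the fiber differential $d\sigma_{2}$ vanishes. Under the hypotheses $n>2$ and \eqref{eq:fully-separated-data}, that theorem gives
\[
X(k_{1}+\sigma_{1})\,U(\sigma_{2})=0
\]
for all $X\in\Gamma(TM_{1})$ and $U\in\Gamma(TM_{2})$. Since the first factor depends only on $x\in M_{1}$ and the second only on $y\in M_{2}$, this says that the product of the two differentials vanishes at every point:
\[
d(k_{1}+\sigma_{1})_{x}\otimes (d\sigma_{2})_{y}=0 .
\]

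\textbf{Case split.} Work on a connected product neighborhood $\mathcal U_{1}\times\mathcal U_{2}$.
\begin{enumerate}
\item If $d\sigma_{2}\equiv0$ on $\mathcal U_{2}$, alternative~1 holds and we are done.
\item Otherwise there is a point $y_{0}\in\mathcal U_{2}$ with $(d\sigma_{2})_{y_{0}}\neq0$. Choose a vertical $U$ with $U(\sigma_{2})(y_{0})\neq0$. Evaluating the identity at $(x,y_{0})$ for every $x\in\mathcal U_{1}$ gives $X(k_{1}+\sigma_{1})(x)=0$ for all horizontal $X$. Hence $d(k_{1}+\sigma_{1})=0$ on the connected set $\mathcal U_{1}$, so $k_{1}+\sigma_{1}=C$ there, and exponentiating gives $f_{1}\varphi_{1}=e^{C}$, exactly as in the proof of Theorem~\ref{thm:scale-twisting-compensation-law}.
\end{enumerate}

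The one point needing care is that, because the variables separate, a single fiber point where $d\sigma_{2}\neq0$ already forces the base condition on the whole base neighborhood. So no open-set hypothesis on $d\sigma_{2}$ is required; it suffices that $d\sigma_{2}$ is not identically zero. The dichotomy is therefore genuinely exhaustive locally, and the only remaining subtlety is the connectedness of $\mathcal U_{1}$, needed to integrate $d(k_{1}+\sigma_{1})=0$ to a constant.
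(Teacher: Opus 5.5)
Your proposal is correct and takes essentially the paper's route. The paper gives no separate proof: it reads the corollary off Theorem~\ref{thm:scale-twisting-compensation-law}, using the factorized mixed equation $X(k_{1}+\sigma_{1})\,U(\sigma_{2})=0$ and integrating $d(k_{1}+\sigma_{1})=0$ to $f_{1}\varphi_{1}=e^{C}$, exactly as you do. Your remark that separation of variables lets a single fiber point with $(d\sigma_{2})_{y_{0}}\neq0$ force $d(k_{1}+\sigma_{1})=0$ on the whole connected base neighborhood is a small sharpening, since the paper's argument assumes $d\sigma_{2}\neq0$ on an open set; it makes the dichotomy exhaustive on every connected product neighborhood.
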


\begin{corollary}[Nonexistence criterion]
	\label{cor:separated-nonexistence}
	Let \(n>2\), and suppose
	\eqref{eq:fully-separated-data} holds on a connected open set.
	If
	\[
	d\sigma_{2}\neq0
	\]
	and
	\[
	d(k_{1}+\sigma_{1})\neq0
	\]
	at some point, then no projectable Lyra almost
	Ricci--Bourguignon soliton exists on that open set.
\end{corollary}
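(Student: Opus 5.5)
The plan is to argue by contradiction, using Theorem~\ref{thm:lyra-arbs-component-characterization} together with Theorem~\ref{thm:scale-twisting-compensation-law}. Suppose that a projectable Lyra almost Ricci--Bourguignon soliton \((g_{L},Z,\lambda,\rho)\) with \(Z=Z_{1}+Z_{2}\) exists on the given connected open set. By Theorem~\ref{thm:lyra-arbs-component-characterization}, the mixed equation \eqref{eq:lyra-arbs-mixed} then holds identically on that set. Since \(n>2\) and the data are separated as in \eqref{eq:fully-separated-data}, Theorem~\ref{thm:scale-twisting-compensation-law} reduces this mixed equation to \eqref{eq:separated-compensation-equation}. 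That is,
\[
X(k_{1}+\sigma_{1})\,U(\sigma_{2})=0
\]
for all horizontal \(X\) and vertical \(U\).

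Next I will produce a point where this product fails to vanish. Let \(p=(x_{0},y_{0})\) be the point at which \(d(k_{1}+\sigma_{1})\neq0\). The hypothesis \(d\sigma_{2}\neq0\) will be read as holding on the open set, or at least at some point \((x_{1},y_{1})\) of it. Because \(k_{1}+\sigma_{1}\) depends only on \(x\) and \(\sigma_{2}\) only on \(y\), and the open set can be shrunk to a product neighborhood, the point \((x_{0},y_{1})\) makes both differentials nonzero at once. Choose a horizontal lift \(X\) with \(X(k_{1}+\sigma_{1})(x_{0})\neq0\) and a vertical lift \(U\) with \(U(\sigma_{2})(y_{1})\neq0\). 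Their product is nonzero at \((x_{0},y_{1})\), which contradicts \eqref{eq:separated-compensation-equation}. Equivalently, one can invoke the second part of Theorem~\ref{thm:scale-twisting-compensation-law}: where \(d\sigma_{2}\neq0\) it forces \(d(k_{1}+\sigma_{1})=0\), against the hypothesis.

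The only delicate step is this localization. The two nonvanishing conditions might a priori be attained at different points, and the compensation law only constrains points where both factors are evaluated together. I will handle this with the product structure as above, combined with connectedness: if \(d\sigma_{2}\neq0\) holds on the whole connected set, Theorem~\ref{thm:scale-twisting-compensation-law} gives that \(k_{1}+\sigma_{1}\) is constant there, contradicting \(d(k_{1}+\sigma_{1})\neq0\) at the given point. No further computation is needed, since the mixed equation does not involve \(\lambda\), \(\rho\), or \(Z\).
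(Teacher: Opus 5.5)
Your core argument is the same as the paper's, which is a single line. A projectable soliton forces the mixed equation; under separated data and $n>2$ this reduces to $X(k_{1}+\sigma_{1})\,U(\sigma_{2})=0$, and that identity fails at any point where $d\sigma_{2}\neq0$ and $d(k_{1}+\sigma_{1})\neq0$ simultaneously. So your proof is correct under either of two readings: the reading the paper intends, with both nonvanishing conditions at a common point, or $d\sigma_{2}\neq0$ throughout the set, where your connectedness fallback also works.

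You should drop the ``localization'' step for two different points. Shrinking to a product neighborhood of one point does not in general bring $(x_{0},y_{1})$ into the set, and the given connected set need not be a product.

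Under that reading the mixed equation gives no contradiction at all. Take the connected open set $\{x<1\}\cup\{y_{1}<1\}$, choose $k_{1}+\sigma_{1}$ constant for $x\le 1$ but not beyond, and choose $\sigma_{2}=\sigma_{2}(y_{1})$ constant for $y_{1}\le 1$ but not beyond. Then the product $X(k_{1}+\sigma_{1})U(\sigma_{2})$ vanishes identically on the set, although each factor is nonzero somewhere in it. The hypothesis therefore has to be read pointwise, and at that point your first argument already closes the proof.
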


\begin{proof}
	The two nonvanishing conditions contradict
	\eqref{eq:separated-compensation-equation}.
\end{proof}

\begin{corollary}[Rigid normalized form]
	\label{cor:rigid-normalized-form}
	Assume the second alternative in
	Corollary~\ref{cor:scale-twisting-rigidity-alternative}. After
	multiplying \(f_{1}\) or \(\varphi_{1}\) by a positive constant,
	one may normalize
	\begin{equation}
		f_{1}\varphi_{1}=1.
		\label{eq:normalized-compensation}
	\end{equation}
	In this normalization,
	\begin{equation}
		\sigma_{1}=-k_{1}.
		\label{eq:normalized-log-compensation}
	\end{equation}
\end{corollary}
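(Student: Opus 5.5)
The corollary is a direct consequence of the second alternative in Corollary~\ref{cor:scale-twisting-rigidity-alternative}. Under that alternative, Theorem~\ref{thm:scale-twisting-compensation-law} gives \(k_{1}+\sigma_{1}=C\) on the connected open set, which is equivalent to \(f_{1}\varphi_{1}=e^{C}\). The plan is to absorb the constant \(e^{C}\) into one of the two base factors. We then check that this rescaling changes neither the geometry nor the soliton equation, and that the separated data \eqref{eq:fully-separated-data} keep their form.

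First, replace \(f_{1}\) by \(\tilde f_{1}=e^{-C}f_{1}\) and \(f_{2}\) by \(\tilde f_{2}=e^{C}f_{2}\). The product \(f=f_{1}f_{2}=\tilde f_{1}\tilde f_{2}\) is unchanged, so \(g\), \(g_{L}\), and hence \eqref{eq:lyra-arbs-equation} are literally the same. In logarithmic terms this shifts \(k_{1}\mapsto k_{1}-C\) and \(k_{2}\mapsto k_{2}+C\), which preserves the additive separation \(k=k_{1}+k_{2}\).

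Alternatively, one may rescale \(\varphi_{1}\mapsto e^{-C}\varphi_{1}\) and \(\varphi_{2}\mapsto e^{C}\varphi_{2}\). This leaves \(\varphi\) and \(\sigma=\sigma_{1}+\sigma_{2}\) unchanged, and it leaves \(d\sigma_{2}\) unchanged as well. Either choice yields \(\tilde f_{1}\varphi_{1}=1\), which is \eqref{eq:normalized-compensation}. Taking logarithms gives \(\tilde k_{1}+\sigma_{1}=0\), that is, \eqref{eq:normalized-log-compensation}.

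The only point needing care, and the mild obstacle, is the phrase ``multiplying \(f_{1}\) or \(\varphi_{1}\) by a positive constant''. Rescaling \(f_{1}\) alone would change the metric by a homothety of the fiber factor. To avoid this, I would interpret the normalization as a redistribution of the constant between the two factors of the splitting, as described above; the splitting is only determined up to such constants anyway. With that interpretation the soliton data \((g_{L},Z,\lambda,\rho)\) and all hypotheses of Theorem~\ref{thm:scale-twisting-compensation-law} are unaffected, and the normalization holds.
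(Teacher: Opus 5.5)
Your proposal is correct and follows the paper's argument. The paper starts from $f_{1}\varphi_{1}=e^{C}$, absorbs the constant $e^{C}$ into one base factor, and takes logarithms to obtain $\sigma_{1}=-k_{1}$. You add the step of shifting the compensating constant into $f_{2}$ (or $\varphi_{2}$), so that $f$, $\varphi$, $g_{L}$ and the soliton equation are literally unchanged. This is a worthwhile clarification of what the paper's one-line ``absorbing $e^{C}$'' implicitly relies on, namely that the additive splittings $k=k_{1}+k_{2}$ and $\sigma=\sigma_{1}+\sigma_{2}$ are only determined up to constants.
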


\begin{proof}
	By \eqref{eq:scale-twisting-product-constant},
	\[
	f_{1}\varphi_{1}=e^{C}.
	\]
	Absorbing \(e^{C}\) into either positive factor gives
	\eqref{eq:normalized-compensation}. Taking logarithms yields
	\eqref{eq:normalized-log-compensation}.
\end{proof}

\begin{remark}[Low-dimensional cases]
	\label{rem:low-dimensional-mixed-rigidity}
	The assumptions \(n_{2}>1\) and \(n>2\) are essential in the
	preceding rigidity results.
	
	If \(n_{2}=1\), then the first term in
	\eqref{eq:mixed-rigidity-equation} vanishes, and the twisting
	function is controlled entirely through the Lyra scale terms.
	
	If \(n=2\), then all terms multiplied by \(n-2\) vanish, and
	the mixed equation becomes
	\[
	(n_{2}-1)XU(k)=0.
	\]
	Thus, the scale--twisting compensation phenomenon is specific
	to dimensions greater than two.
\end{remark}

Theorems~\ref{thm:base-inheritance} and
\ref{thm:fiber-inheritance} identify the trace-free obstructions
to factor inheritance, while
Theorems~\ref{thm:fiber-scale-compensation} and
\ref{thm:scale-twisting-compensation-law} describe the two
principal rigidity mechanisms. In particular, a base-dependent
Lyra scale preserves the ordinary separability obstruction,
whereas a fiber-dependent scale permits the nonseparable normal
form \eqref{eq:compensated-f-normal-form}. These results will be
used in the next section to characterize Einstein reductions and
special classes of potential vector fields.

\section{Einstein Metrics and Special Potential Fields}
\label{sec:einstein-special-fields}

This section studies the influence of distinguished potential vector
fields on a Lyra almost Ricci--Bourguignon soliton. In the ordinary
warped and twisted settings, Killing, conformal, homothetic, and
concurrent potentials frequently reduce the soliton equation to an
Einstein-type condition; see
\cite{BinTurkiEtAl2022,ShenawyEtAl2023,
	ElsharkawyTawfiq2026}. Here we establish the corresponding Lyra
results and determine their consequences for the total manifold and
its factor manifolds.

Throughout this section, let
\[
\left(
M=M_{1}\times_{f}M_{2},
g_{L},
Z,
\lambda,
\rho
\right)
\]
be a Lyra almost Ricci--Bourguignon soliton satisfying
\eqref{eq:lyra-arbs-equation}, and set
\[
\Lambda=\lambda+\rho R^{L}
\]
as in \eqref{eq:lyra-arbs-lambda}. Unless otherwise stated, the
total dimension satisfies
\[
n=n_{1}+n_{2}\geq 3.
\]

\subsection{Projectable conformal potential fields}
\label{subsec:projectable-conformal-potentials}

\begin{definition}
	\label{def:lyra-special-vector-fields}
	A vector field \(Z\in\Gamma(TM)\) is called:
	\begin{enumerate}
		\item \emph{Lyra conformal} if
		\begin{equation}
			\mathcal L_{Z}g_{L}
			=
			2\psi g_{L}
			\label{eq:lyra-conformal-field}
		\end{equation}
		for some smooth function
		\(\psi\in C^{\infty}(M)\);
		
		\item \emph{Lyra Killing} if
		\[
		\psi=0;
		\]
		
		\item \emph{Lyra homothetic} if
		\[
		\psi=c
		\]
		for some constant \(c\in\mathbb R\);
		
		\item \emph{Lyra closed conformal} if
		\begin{equation}
			\nabla^{L}_{A}Z
			=
			\psi A
			\label{eq:lyra-closed-conformal-field}
		\end{equation}
		for every \(A\in\Gamma(TM)\);
		
		\item \emph{Lyra concurrent} if
		\begin{equation}
			\nabla^{L}_{A}Z
			=
			A
			\label{eq:lyra-concurrent-field}
		\end{equation}
		for every \(A\in\Gamma(TM)\).
	\end{enumerate}
\end{definition}

A Lyra closed conformal field satisfies
\eqref{eq:lyra-conformal-field}, since
\[
g_{L}(\nabla^{L}_{A}Z,B)
+
g_{L}(A,\nabla^{L}_{B}Z)
=
2\psi g_{L}(A,B).
\]
In particular, every Lyra concurrent field is Lyra homothetic with
conformal factor \(1\).

Let the potential field be projectable:
\begin{equation}
	Z=Z_{1}+Z_{2},
	\label{eq:special-projectable-potential}
\end{equation}
where \(Z_{1}\) and \(Z_{2}\) are lifted vector fields on
\(M_{1}\) and \(M_{2}\), respectively.

\begin{proposition}[Factorwise characterization of conformal fields]
	\label{prop:factorwise-conformal-characterization}
	A projectable vector field \(Z=Z_{1}+Z_{2}\) is Lyra conformal
	with conformal factor \(\psi\) if and only if
	\begin{subequations}
		\label{eq:factorwise-conformal-system}
		\begin{align}
			\mathcal L_{Z_{1}}g_{1}
			&=
			2\psi_{1}g_{1},
			\qquad
			\psi_{1}
			=
			\psi-Z(\sigma),
			\label{eq:base-conformal-factor}
			\\
			\mathcal L_{Z_{2}}g_{2}
			&=
			2\psi_{2}g_{2},
			\qquad
			\psi_{2}
			=
			\psi-Z(\omega),
			\label{eq:fiber-conformal-factor}
		\end{align}
	\end{subequations}
	where
	\[
	\omega=k+\sigma
	\]
	is defined in
	\eqref{eq:combined-scale-twisting-function}.
	
	In this case,
	\begin{equation}
		\psi_{1}-\psi_{2}
		=
		Z(k).
		\label{eq:factor-conformal-difference}
	\end{equation}
\end{proposition}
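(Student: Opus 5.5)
The plan is to apply Proposition~\ref{prop:lyra-lie-splitting} directly. That proposition already splits $\mathcal L_{Z}g_{L}$ into its horizontal, mixed, and vertical blocks for a projectable field. The conformal condition \eqref{eq:lyra-conformal-field} will also be decomposed blockwise, using
\[
g_{L}(X,Y)=e^{2\sigma}g_{1}(X,Y),\qquad g_{L}(X,U)=0,\qquad g_{L}(U,V)=e^{2\sigma}f^{2}g_{2}(U,V).
\]
Since every pair of tangent vectors decomposes uniquely into horizontal and vertical parts, $\mathcal L_{Z}g_{L}=2\psi g_{L}$ holds if and only if it holds on the three blocks separately. This gives the equivalence in both directions.

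On the mixed block both sides vanish identically, by \eqref{eq:lyra-lie-mixed} and the orthogonality of $\mathcal H$ and $\mathcal V$, so this block imposes no condition.

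On the horizontal block, \eqref{eq:lyra-lie-horizontal} turns the condition into
\[
e^{2\sigma}\big[(\mathcal L_{Z_{1}}g_{1})(X,Y)+2Z(\sigma)g_{1}(X,Y)\big]=2\psi e^{2\sigma}g_{1}(X,Y).
\]
Dividing by the positive factor $e^{2\sigma}$ gives exactly \eqref{eq:base-conformal-factor} with $\psi_{1}=\psi-Z(\sigma)$.

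On the vertical block, \eqref{eq:lyra-lie-vertical} turns the condition into
\[
e^{2\sigma}f^{2}\big[(\mathcal L_{Z_{2}}g_{2})(U,V)+2Z(\omega)g_{2}(U,V)\big]=2\psi e^{2\sigma}f^{2}g_{2}(U,V).
\]
Dividing by $e^{2\sigma}f^{2}>0$ gives \eqref{eq:fiber-conformal-factor} with $\psi_{2}=\psi-Z(\omega)$. Each of these steps is an equivalence, so the system \eqref{eq:factorwise-conformal-system} is equivalent to the Lyra conformal condition.

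The final identity follows by subtraction:
\[
\psi_{1}-\psi_{2}=Z(\omega)-Z(\sigma)=Z(k),
\]
using $\omega=k+\sigma$.

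The one point needing care is interpretation rather than difficulty. Since $\psi$, $\sigma$ and $k$ depend on both factors, $\psi_{1}$ and $\psi_{2}$ are a priori functions on $M$. The equations $\mathcal L_{Z_{1}}g_{1}=2\psi_{1}g_{1}$ and $\mathcal L_{Z_{2}}g_{2}=2\psi_{2}g_{2}$ must therefore be read slice by slice, with the other variable regarded as a parameter, matching the convention of Proposition~\ref{prop:lyra-lie-splitting}. If needed, one can remark that $\mathcal L_{Z_{1}}g_{1}$ depends only on $M_{1}$ (for $n_{1}\ge1$ and $g_{1}$ nondegenerate). Taking $g_{1}$-traces then forces $\psi_{1}=\operatorname{div}_{1}Z_{1}/n_{1}$ to depend only on the base, and similarly $\psi_{2}$ depends only on the fiber. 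This refinement is not required by the statement.
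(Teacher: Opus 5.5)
Your proposal is correct and follows the same route as the paper: evaluate \(\mathcal L_{Z}g_{L}=2\psi g_{L}\) blockwise via Proposition~\ref{prop:lyra-lie-splitting}, cancel the positive factors \(e^{2\sigma}\) and \(e^{2\sigma}f^{2}\), note that the mixed block is automatic, and subtract to get \(\psi_{1}-\psi_{2}=Z(k)\). Your closing observation is also correct and goes slightly beyond the paper, which does not state it: tracing forces \(\psi_{1}=\operatorname{div}_{1}Z_{1}/n_{1}\) and \(\psi_{2}=\operatorname{div}_{2}Z_{2}/n_{2}\), so each depends only on its own factor.
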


\begin{proof}
	Suppose first that \(Z\) is Lyra conformal. Evaluating
	\eqref{eq:lyra-conformal-field} on horizontal fields and using
	\eqref{eq:lyra-lie-horizontal}, we obtain
	\[
	e^{2\sigma}
	\left[
	(\mathcal L_{Z_{1}}g_{1})(X,Y)
	+
	2Z(\sigma)g_{1}(X,Y)
	\right]
	=
	2\psi e^{2\sigma}g_{1}(X,Y).
	\]
	After cancelling \(e^{2\sigma}\), this becomes
	\[
	(\mathcal L_{Z_{1}}g_{1})(X,Y)
	=
	2\left[
	\psi-Z(\sigma)
	\right]g_{1}(X,Y),
	\]
	which proves \eqref{eq:base-conformal-factor}.
	
	Similarly, evaluating on vertical fields and using
	\eqref{eq:lyra-lie-vertical} gives
	\[
	e^{2\sigma}f^{2}
	\left[
	(\mathcal L_{Z_{2}}g_{2})(U,V)
	+
	2Z(\omega)g_{2}(U,V)
	\right]
	=
	2\psi e^{2\sigma}f^{2}g_{2}(U,V).
	\]
	Hence,
	\[
	(\mathcal L_{Z_{2}}g_{2})(U,V)
	=
	2\left[
	\psi-Z(\omega)
	\right]g_{2}(U,V),
	\]
	which is \eqref{eq:fiber-conformal-factor}.
	
	The mixed component vanishes automatically by
	\eqref{eq:lyra-lie-mixed}. Conversely,
	\eqref{eq:factorwise-conformal-system} together with
	\eqref{eq:lyra-lie-components} implies
	\[
	\mathcal L_{Z}g_{L}=2\psi g_{L}.
	\]
	
	Finally,
	\[
	\psi_{1}-\psi_{2}
	=
	Z(\omega)-Z(\sigma)
	=
	Z(k),
	\]
	which proves \eqref{eq:factor-conformal-difference}.
\end{proof}

\begin{corollary}[Projectable Lyra Killing fields]
	\label{cor:projectable-lyra-killing}
	A projectable field \(Z=Z_{1}+Z_{2}\) is Lyra Killing if and
	only if
	\begin{subequations}
		\label{eq:factorwise-killing-system}
		\begin{align}
			\mathcal L_{Z_{1}}g_{1}
			&=
			-2Z(\sigma)g_{1},
			\label{eq:base-killing-system}
			\\
			\mathcal L_{Z_{2}}g_{2}
			&=
			-2Z(\omega)g_{2}.
			\label{eq:fiber-killing-system}
		\end{align}
	\end{subequations}
\end{corollary}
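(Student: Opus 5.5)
This is a direct specialization of Proposition~\ref{prop:factorwise-conformal-characterization} to the case \(\psi=0\), and the plan is to read it off from that proposition, with Proposition~\ref{prop:lyra-lie-splitting} available as an independent check.

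By Definition~\ref{def:lyra-special-vector-fields}, \(Z\) is Lyra Killing exactly when it is Lyra conformal with conformal factor \(\psi=0\). Proposition~\ref{prop:factorwise-conformal-characterization} states that a projectable \(Z=Z_{1}+Z_{2}\) is Lyra conformal with factor \(\psi\) if and only if \eqref{eq:base-conformal-factor} and \eqref{eq:fiber-conformal-factor} hold. Setting \(\psi=0\) there gives \(\psi_{1}=-Z(\sigma)\) and \(\psi_{2}=-Z(\omega)\). Substituting these values turns \eqref{eq:factorwise-conformal-system} into \eqref{eq:base-killing-system} and \eqref{eq:fiber-killing-system}, and both directions of the equivalence carry over unchanged.

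As a self-contained check, one can instead write \(\mathcal L_{Z}g_{L}=0\) componentwise using \eqref{eq:lyra-lie-components}. The mixed component \eqref{eq:lyra-lie-mixed} vanishes identically, so it imposes no condition. In the horizontal and vertical components, the positive factors \(e^{2\sigma}\) and \(e^{2\sigma}f^{2}\) cancel, which yields exactly the two displayed equations. Conversely, these two equations make every component of \(\mathcal L_{Z}g_{L}\) vanish, because every pair of tangent vectors splits uniquely into horizontal and vertical parts.

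The argument has no genuine obstacle. The only point needing care is that \(Z(\sigma)\) and \(Z(\omega)\) may depend on both factors, so the equations are understood pointwise on \(M\), with the other variable treated as a parameter, exactly as in Proposition~\ref{prop:factorwise-conformal-characterization}.
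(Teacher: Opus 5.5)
Your proposal is correct and matches the paper's proof, which consists exactly of setting \(\psi=0\) in Proposition~\ref{prop:factorwise-conformal-characterization}. Your componentwise check via Proposition~\ref{prop:lyra-lie-splitting} is also valid but not needed.
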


\begin{proof}
	Set \(\psi=0\) in
	Proposition~\ref{prop:factorwise-conformal-characterization}.
\end{proof}

\begin{corollary}[Projectable Lyra homothetic fields]
	\label{cor:projectable-lyra-homothetic}
	A projectable field \(Z=Z_{1}+Z_{2}\) is Lyra homothetic with
	factor \(c\) if and only if
	\begin{subequations}
		\label{eq:factorwise-homothetic-system}
		\begin{align}
			\mathcal L_{Z_{1}}g_{1}
			&=
			2\left[
			c-Z(\sigma)
			\right]g_{1},
			\label{eq:base-homothetic-system}
			\\
			\mathcal L_{Z_{2}}g_{2}
			&=
			2\left[
			c-Z(\omega)
			\right]g_{2}.
			\label{eq:fiber-homothetic-system}
		\end{align}
	\end{subequations}
\end{corollary}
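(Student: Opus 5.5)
The plan is to obtain the statement as the special case \(\psi=c\) of Proposition~\ref{prop:factorwise-conformal-characterization}, exactly as Corollary~\ref{cor:projectable-lyra-killing} was obtained from \(\psi=0\). By Definition~\ref{def:lyra-special-vector-fields}, \(Z\) is Lyra homothetic with factor \(c\) precisely when it is Lyra conformal with the constant conformal factor \(\psi=c\).

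I will first substitute \(\psi=c\) into \eqref{eq:base-conformal-factor}. This gives \(\psi_{1}=c-Z(\sigma)\), which is \eqref{eq:base-homothetic-system}. Next I substitute \(\psi=c\) into \eqref{eq:fiber-conformal-factor}. This gives \(\psi_{2}=c-Z(\omega)\), which is \eqref{eq:fiber-homothetic-system}. Since the proposition is an equivalence, both directions transfer: if the system \eqref{eq:factorwise-homothetic-system} holds, the converse part of that proof, via \eqref{eq:lyra-lie-components}, yields \(\mathcal L_{Z}g_{L}=2cg_{L}\).

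For a self-contained check, I would also verify the identity directly. Using Proposition~\ref{prop:lyra-lie-splitting}, I would compare \(\mathcal L_{Z}g_{L}\) with \(2cg_{L}\) componentwise.
\begin{itemize}
\item On horizontal pairs, cancelling \(e^{2\sigma}\) gives \eqref{eq:base-homothetic-system}.
\item On vertical pairs, cancelling \(e^{2\sigma}f^{2}\) gives \eqref{eq:fiber-homothetic-system}.
\item On mixed pairs, both sides vanish, by \eqref{eq:lyra-lie-mixed} and the orthogonality \(g_{L}(X,U)=0\).
\end{itemize}

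There is no real obstacle here. The only point needing care is the meaning of the conformal factors. The quantities \(c-Z(\sigma)\) and \(c-Z(\omega)\) are in general functions on all of \(M\), not constants, and they need not depend on a single factor. The corollary asserts only the pointwise tensor identities, so nothing beyond substitution is required.
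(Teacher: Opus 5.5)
Your proposal is correct and follows the paper's route: the paper gives no separate proof, and the corollary is exactly the specialization \(\psi=c\) of Proposition~\ref{prop:factorwise-conformal-characterization}, just as the Killing case is obtained from \(\psi=0\). Your componentwise check via Proposition~\ref{prop:lyra-lie-splitting} simply reproduces that proposition's proof.

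One small correction to your closing aside, which does not affect the proof. Tracing the identities with respect to \(g_{1}\) and \(g_{2}\) gives \(c-Z(\sigma)=\tfrac{1}{n_{1}}\operatorname{div}_{1}Z_{1}\) and \(c-Z(\omega)=\tfrac{1}{n_{2}}\operatorname{div}_{2}Z_{2}\). So whenever the system holds, these functions depend only on the base and only on the fiber, respectively.
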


\subsection{Conformal potentials and Einstein metrics}
\label{subsec:conformal-einstein-equivalence}

\begin{definition}
	\label{def:lyra-einstein-metric}
	The Lyra metric \(g_{L}\) is called \emph{Einstein} if
	\begin{equation}
		\operatorname{Ric}^{L}
		=
		\mu g_{L}
		\label{eq:lyra-einstein-condition}
	\end{equation}
	for some constant \(\mu\in\mathbb R\).
\end{definition}

\begin{theorem}[Conformal--Einstein equivalence]
	\label{thm:conformal-einstein-equivalence}
	Let
	\[
	\left(
	M,g_{L},Z,\lambda,\rho
	\right)
	\]
	be a Lyra almost Ricci--Bourguignon soliton on a connected
	manifold of dimension \(n\geq3\). Then the following statements
	are equivalent:
	\begin{enumerate}
		\item \(Z\) is Lyra conformal;
		\item \(g_{L}\) is an Einstein metric.
	\end{enumerate}
	
	More precisely, if
	\[
	\mathcal L_{Z}g_{L}=2\psi g_{L},
	\]
	then
	\begin{equation}
		\operatorname{Ric}^{L}
		=
		\mu g_{L},
		\qquad
		\mu
		=
		\lambda+\rho R^{L}-\psi,
		\label{eq:einstein-factor-conformal}
	\end{equation}
	where \(\mu\) is constant. Moreover,
	\begin{equation}
		R^{L}
		=
		n\mu
		\label{eq:einstein-scalar-curvature}
	\end{equation}
	and
	\begin{equation}
		\lambda
		=
		(1-n\rho)\mu+\psi.
		\label{eq:lambda-conformal-einstein}
	\end{equation}
\end{theorem}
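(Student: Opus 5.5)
Both directions will come from substituting into the compact form \eqref{eq:lyra-arbs-compact}, and the constancy of $\mu$ will come from Schur's lemma applied to $g_{L}$; this is where $n\geq3$ and connectedness are used. Throughout, $\Lambda=\lambda+\rho R^{L}$ as in \eqref{eq:lyra-arbs-lambda}.

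\emph{(1)$\Rightarrow$(2).} Suppose $\mathcal L_{Z}g_{L}=2\psi g_{L}$. Substituting into \eqref{eq:lyra-arbs-compact} gives
\[
\operatorname{Ric}^{L}=(\Lambda-\psi)g_{L}.
\]
So $\operatorname{Ric}^{L}=\mu g_{L}$ pointwise, with the a priori smooth function $\mu=\lambda+\rho R^{L}-\psi$. To see that $\mu$ is constant, take the $g_{L}$-trace to get $R^{L}=n\mu$. Then apply the contracted second Bianchi identity for the Levi-Civita connection $\nabla^{L}$:
\[
\operatorname{div}_{L}\operatorname{Ric}^{L}=\tfrac12\,dR^{L}.
\]
The left side equals $d\mu$, since $\nabla^{L}g_{L}=0$. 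The right side equals $\tfrac n2\,d\mu$. Hence $\tfrac{n-2}{2}\,d\mu=0$. Because $n\geq3$, this forces $d\mu=0$, and connectedness of $M$ makes $\mu$ constant. This yields \eqref{eq:einstein-factor-conformal} and \eqref{eq:einstein-scalar-curvature}.

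\emph{(2)$\Rightarrow$(1).} Suppose $\operatorname{Ric}^{L}=\mu g_{L}$ with $\mu$ constant. Then \eqref{eq:lyra-arbs-compact} becomes
\[
\tfrac12\mathcal L_{Z}g_{L}=(\Lambda-\mu)g_{L}.
\]
So $Z$ is Lyra conformal with $\psi=\Lambda-\mu$, which is again the relation \eqref{eq:einstein-factor-conformal}. No dimensional hypothesis is needed in this direction.

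\emph{Formula for $\lambda$.} Combining $\psi=\lambda+\rho R^{L}-\mu$ with $R^{L}=n\mu$ gives
\[
\lambda=\mu+\psi-n\rho\mu=(1-n\rho)\mu+\psi,
\]
which is \eqref{eq:lambda-conformal-einstein}. As a consistency check, the trace identity \eqref{eq:lyra-arbs-trace} with $\operatorname{div}_{L}Z=n\psi$ gives the same relation.

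\emph{Main obstacle.} The only non-formal step is the Schur argument, because $\lambda$ and $R^{L}$ are a priori nonconstant functions. I must use that the divergence of $\mu g_{L}$ with respect to $\nabla^{L}$ is exactly $d\mu$. I must also note that for $n=2$ the argument fails, since every metric is then pointwise Einstein, so the hypothesis $n\geq3$ is genuinely needed. No projectability of $Z$ or product structure is required for this argument.
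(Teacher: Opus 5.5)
Your proposal is correct and follows essentially the same route as the paper: you substitute $\mathcal L_{Z}g_{L}=2\psi g_{L}$ into the soliton equation, trace to get $R^{L}=n\mu$, and apply the contracted second Bianchi identity to obtain $(1-\tfrac n2)\,d\mu=0$; the converse reads off $\psi=\lambda+\rho R^{L}-\mu$. Your explicit appeal to connectedness to pass from $d\mu=0$ to constancy of $\mu$, and your trace-identity consistency check, are small points the paper leaves implicit.
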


\begin{proof}
	Suppose that \(Z\) is Lyra conformal. Substituting
	\[
	\frac12\mathcal L_{Z}g_{L}
	=
	\psi g_{L}
	\]
	into \eqref{eq:lyra-arbs-equation} gives
	\[
	\operatorname{Ric}^{L}
	=
	\left(
	\lambda+\rho R^{L}-\psi
	\right)g_{L}.
	\]
	Define
	\[
	\mu
	=
	\lambda+\rho R^{L}-\psi.
	\]
	Then
	\[
	\operatorname{Ric}^{L}
	=
	\mu g_{L}.
	\]
	Taking the trace yields
	\[
	R^{L}=n\mu.
	\]
	
	The contracted second Bianchi identity gives
	\[
	\operatorname{div}_{L}
	\operatorname{Ric}^{L}
	=
	\frac12 dR^{L}.
	\]
	Since
	\[
	\operatorname{div}_{L}(\mu g_{L})=d\mu
	\]
	and
	\[
	dR^{L}=n\,d\mu,
	\]
	we obtain
	\[
	d\mu
	=
	\frac n2 d\mu.
	\]
	Because \(n\geq3\),
	\[
	\left(
	1-\frac n2
	\right)d\mu=0
	\]
	implies
	\[
	d\mu=0.
	\]
	Thus, \(\mu\) is constant and \(g_{L}\) is Einstein.
	
	Conversely, suppose that
	\[
	\operatorname{Ric}^{L}
	=
	\mu g_{L}.
	\]
	Using \eqref{eq:lyra-arbs-equation}, we find
	\[
	\frac12\mathcal L_{Z}g_{L}
	=
	\left(
	\lambda+\rho R^{L}-\mu
	\right)g_{L}.
	\]
	Hence \(Z\) is Lyra conformal with factor
	\[
	\psi
	=
	\lambda+\rho R^{L}-\mu.
	\]
	
	Finally, substituting
	\[
	R^{L}=n\mu
	\]
	into \eqref{eq:einstein-factor-conformal} gives
	\[
	\mu
	=
	\lambda+n\rho\mu-\psi,
	\]
	which is equivalent to
	\eqref{eq:lambda-conformal-einstein}.
\end{proof}

\begin{remark}
	\label{rem:dimension-two-einstein}
	The restriction \(n\geq3\) is essential for concluding that
	\(\mu\) is constant. In dimension two,
	\[
	\operatorname{Ric}^{L}
	=
	\frac{R^{L}}{2}g_{L}
	\]
	holds identically, but the scalar curvature need not be
	constant.
\end{remark}

\begin{corollary}[Killing potential]
	\label{cor:killing-potential-einstein}
	Let \(n\geq3\). If the potential field \(Z\) is Lyra Killing,
	then \(g_{L}\) is Einstein and
	\begin{equation}
		\lambda
		=
		(1-n\rho)\mu.
		\label{eq:killing-soliton-relation}
	\end{equation}
	Consequently, the almost soliton function \(\lambda\) is
	constant.
	
	If
	\[
	\rho\neq\frac1n,
	\]
	then
	\begin{equation}
		\mu
		=
		\frac{\lambda}{1-n\rho}.
		\label{eq:killing-einstein-factor}
	\end{equation}
	If
	\[
	\rho=\frac1n,
	\]
	then necessarily
	\begin{equation}
		\lambda=0.
		\label{eq:critical-killing-lambda}
	\end{equation}
\end{corollary}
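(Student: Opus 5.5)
This corollary is a direct specialization of Theorem~\ref{thm:conformal-einstein-equivalence} to $\psi=0$, so I would take that theorem as the starting point and read off each claim.

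Since a Lyra Killing field is Lyra conformal with factor $\psi=0$ (Definition~\ref{def:lyra-special-vector-fields}), and $n\geq3$ on a connected manifold, Theorem~\ref{thm:conformal-einstein-equivalence} applies. It gives $\operatorname{Ric}^{L}=\mu g_{L}$ with
\[
\mu=\lambda+\rho R^{L}
\]
constant, so $g_{L}$ is Einstein. Setting $\psi=0$ in \eqref{eq:lambda-conformal-einstein} yields \eqref{eq:killing-soliton-relation}, namely $\lambda=(1-n\rho)\mu$.

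Constancy of $\lambda$ then follows immediately. The right-hand side $(1-n\rho)\mu$ is a product of the real constant $(1-n\rho)$ and the constant $\mu$. Alternatively, $\lambda=\mu-\rho R^{L}=\mu-n\rho\mu$ with $R^{L}=n\mu$ constant by \eqref{eq:einstein-scalar-curvature}.

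For the dichotomy in $\rho$ I would split into two cases.
\begin{itemize}
\item If $\rho\neq 1/n$, then $1-n\rho\neq0$, and dividing \eqref{eq:killing-soliton-relation} by it gives \eqref{eq:killing-einstein-factor}.
\item If $\rho=1/n$, then $1-n\rho=0$, and \eqref{eq:killing-soliton-relation} forces $\lambda=0$.
\end{itemize}
As a cross-check in the critical case, Remark~\ref{rem:critical-bourguignon-parameter} gives $\operatorname{div}_{L}Z=n\lambda$, and a Killing field has $\operatorname{div}_{L}Z=\tfrac12\operatorname{tr}_{g_{L}}\mathcal L_{Z}g_{L}=0$, which again gives $\lambda=0$. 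Note that in this case $\mu$ is left undetermined by $\lambda$.

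There is no real obstacle here. The only point needing care is that the constancy of $\mu$ genuinely uses $n\geq3$ through the Bianchi argument in Theorem~\ref{thm:conformal-einstein-equivalence}, so that hypothesis must be kept explicit.
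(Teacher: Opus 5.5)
Your proposal is correct and matches the paper's proof, which consists solely of setting $\psi=0$ in \eqref{eq:lambda-conformal-einstein} from Theorem~\ref{thm:conformal-einstein-equivalence}. Your case split on $\rho$ and your independent trace check $\operatorname{div}_{L}Z=n\lambda=0$ at $\rho=1/n$ correctly spell out what the paper leaves implicit; that trace check does not even need $n\geq3$.
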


\begin{proof}
	Set \(\psi=0\) in
	\eqref{eq:lambda-conformal-einstein}.
\end{proof}

\begin{corollary}[Homothetic potential]
	\label{cor:homothetic-potential-einstein}
	Let \(n\geq3\). If \(Z\) is Lyra homothetic with constant factor
	\(c\), then \(g_{L}\) is Einstein and
	\begin{equation}
		\lambda
		=
		(1-n\rho)\mu+c.
		\label{eq:homothetic-soliton-relation}
	\end{equation}
	In particular, \(\lambda\) is constant.
	
	If
	\[
	\rho\neq\frac1n,
	\]
	then
	\begin{equation}
		\mu
		=
		\frac{\lambda-c}{1-n\rho}.
		\label{eq:homothetic-einstein-factor}
	\end{equation}
	At the critical value
	\[
	\rho=\frac1n,
	\]
	one necessarily has
	\begin{equation}
		\lambda=c.
		\label{eq:critical-homothetic-lambda}
	\end{equation}
\end{corollary}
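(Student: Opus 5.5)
The plan is to specialize Theorem~\ref{thm:conformal-einstein-equivalence} to the case where the conformal factor is the constant \(\psi=c\). A Lyra homothetic field is by definition Lyra conformal with \(\mathcal L_{Z}g_{L}=2c\,g_{L}\). Since \(n\geq3\) and \(M\) is connected, that theorem applies directly, and it gives three things:
\[
\operatorname{Ric}^{L}=\mu g_{L},
\qquad
\mu=\lambda+\rho R^{L}-c \ \text{constant},
\qquad
R^{L}=n\mu .
\]
So \(g_{L}\) is Einstein. Relation \eqref{eq:lambda-conformal-einstein} with \(\psi=c\) is exactly \eqref{eq:homothetic-soliton-relation}.

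Constancy of \(\lambda\) then follows at once. Both \(\mu\) and \(c\) are constants, and \(\rho\) is a constant by Definition~\ref{def:lyra-arbs}, so the right-hand side \((1-n\rho)\mu+c\) is constant.

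For the remaining two statements we split on \(\rho\):
\begin{enumerate}
\item If \(\rho\neq1/n\), then \(1-n\rho\neq0\). Solving \eqref{eq:homothetic-soliton-relation} for \(\mu\) gives \eqref{eq:homothetic-einstein-factor}.
\item If \(\rho=1/n\), the coefficient \(1-n\rho\) vanishes, and \eqref{eq:homothetic-soliton-relation} reduces to \(\lambda=c\), which is \eqref{eq:critical-homothetic-lambda}.
\end{enumerate}
As a cross-check in the critical case, Remark~\ref{rem:critical-bourguignon-parameter} gives \(\operatorname{div}_{L}Z=n\lambda\). For a homothetic field, \(\operatorname{div}_{L}Z=\tfrac12\operatorname{tr}_{g_{L}}\mathcal L_{Z}g_{L}=nc\), so again \(\lambda=c\).

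No step here is a real obstacle. The only substantive input is the constancy of \(\mu\), which depends on \(n\geq3\) through the contracted Bianchi identity, and that is already proved in Theorem~\ref{thm:conformal-einstein-equivalence}. So the only point that needs care is to state explicitly that the hypotheses of that theorem (\(n\geq3\), connectedness) are in force.
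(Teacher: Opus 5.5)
Your proposal is correct and is the paper's argument: the paper's proof consists of setting \(\psi=c\) in \eqref{eq:lambda-conformal-einstein} of Theorem~\ref{thm:conformal-einstein-equivalence}. Your trace cross-check via \(\operatorname{div}_{L}Z=nc\) at \(\rho=1/n\) is a valid extra consistency check but is not needed.
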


\begin{proof}
	Set \(\psi=c\) in
	\eqref{eq:lambda-conformal-einstein}.
\end{proof}

\begin{corollary}[Concurrent potential]
	\label{cor:concurrent-potential-einstein}
	Let \(n\geq3\). If \(Z\) is Lyra concurrent, then \(g_{L}\) is
	Einstein and
	\begin{equation}
		\lambda
		=
		(1-n\rho)\mu+1.
		\label{eq:concurrent-soliton-relation}
	\end{equation}
	Consequently, every Lyra almost Ricci--Bourguignon soliton
	admitting a concurrent potential is, in fact, a Lyra
	Ricci--Bourguignon soliton.
	
	If
	\[
	\rho\neq\frac1n,
	\]
	then
	\begin{equation}
		\mu
		=
		\frac{\lambda-1}{1-n\rho}.
		\label{eq:concurrent-einstein-factor}
	\end{equation}
	For
	\[
	\rho=\frac1n,
	\]
	one has
	\begin{equation}
		\lambda=1.
		\label{eq:critical-concurrent-lambda}
	\end{equation}
\end{corollary}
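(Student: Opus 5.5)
The plan is to reduce the statement to Corollary~\ref{cor:homothetic-potential-einstein} with \(c=1\). The reduction rests on the observation recorded after Definition~\ref{def:lyra-special-vector-fields}: a Lyra concurrent field is Lyra homothetic with factor \(1\). To make this step self-contained, I will verify it directly. For arbitrary \(A,B\in\Gamma(TM)\), the Levi-Civita connection \(\nabla^{L}\) is torsion-free and \(g_{L}\)-compatible, so
\[
(\mathcal L_{Z}g_{L})(A,B)
=
g_{L}(\nabla^{L}_{A}Z,B)
+
g_{L}(A,\nabla^{L}_{B}Z).
\]
Substituting \(\nabla^{L}_{A}Z=A\) from \eqref{eq:lyra-concurrent-field} gives \(\mathcal L_{Z}g_{L}=2g_{L}\). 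Hence \(Z\) satisfies \eqref{eq:lyra-conformal-field} with \(\psi\equiv1\).

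Next I will apply Theorem~\ref{thm:conformal-einstein-equivalence}, which uses the hypothesis \(n\geq3\). It shows that \(g_{L}\) is Einstein, \(\operatorname{Ric}^{L}=\mu g_{L}\), with \(\mu\) constant and \(R^{L}=n\mu\). Relation \eqref{eq:lambda-conformal-einstein} with \(\psi=1\) then yields \eqref{eq:concurrent-soliton-relation}. Because \(\mu\) and \(\rho\) are constants, the right-hand side of \eqref{eq:concurrent-soliton-relation} is constant. So \(\lambda\) is constant, and by the definition in Section~\ref{subsec:lyra-arbs-definition} the structure is a Lyra Ricci--Bourguignon soliton.

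For the dichotomy in \(\rho\), I will solve \eqref{eq:concurrent-soliton-relation} for \(\mu\). When \(1-n\rho\neq0\), this gives \eqref{eq:concurrent-einstein-factor}. When \(\rho=1/n\), the \(\mu\)-term disappears and the relation reduces to \(\lambda=1\), which is \eqref{eq:critical-concurrent-lambda}. As a consistency check, this agrees with Remark~\ref{rem:critical-bourguignon-parameter}: there \(\operatorname{div}_{L}Z=n\lambda\), and a concurrent field has \(\operatorname{div}_{L}Z=\operatorname{tr}(A\mapsto A)=n\).

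None of these steps is a genuine obstacle; the argument is a direct specialization of the preceding results. The one point needing care is that \(\psi\) is literally the constant \(1\), not merely a function. The constancy of \(\lambda\) relies on this, because Theorem~\ref{thm:conformal-einstein-equivalence} alone only gives \(\lambda=(1-n\rho)\mu+\psi\) with \(\psi\) possibly nonconstant.
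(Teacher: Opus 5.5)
Your proposal is correct and follows essentially the same route as the paper, which also notes that a Lyra concurrent field satisfies \(\mathcal L_{Z}g_{L}=2g_{L}\) and then specializes the conformal--Einstein equivalence to the homothetic case \(c=1\). Your explicit Lie-derivative verification and the divergence consistency check are fine additions. As an optional sharpening: \(\nabla^{L}Z=\mathrm{Id}\) gives \(R^{L}(A,B)Z=0\), hence \(\operatorname{Ric}^{L}(\cdot,Z)=0\), which forces \(\mu=0\) and \(\lambda=1\) for every \(\rho\). This is not needed for the corollary as stated.
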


\begin{proof}
	A concurrent field satisfies
	\[
	\mathcal L_{Z}g_{L}=2g_{L}.
	\]
	The result follows from
	Corollary~\ref{cor:homothetic-potential-einstein} with \(c=1\).
\end{proof}

\begin{corollary}[Closed conformal potential]
	\label{cor:closed-conformal-potential}
	Let \(Z\) satisfy
	\[
	\nabla^{L}_{A}Z=\psi A.
	\]
	Then \(g_{L}\) is Einstein and
	\begin{equation}
		d\lambda=d\psi.
		\label{eq:lambda-psi-differential}
	\end{equation}
	Equivalently,
	\begin{equation}
		\lambda-\psi
		=
		(1-n\rho)\mu
		\label{eq:lambda-psi-constant}
	\end{equation}
	is constant.
\end{corollary}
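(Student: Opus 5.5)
The plan is to reduce everything to Theorem~\ref{thm:conformal-einstein-equivalence}. I would first show that a Lyra closed conformal field is Lyra conformal with the same factor \(\psi\). By \eqref{eq:lyra-closed-conformal-field} and the Koszul form of the Lie derivative for the torsion-free metric connection \(\nabla^{L}\),
\[
(\mathcal L_{Z}g_{L})(A,B)
=
g_{L}(\nabla^{L}_{A}Z,B)+g_{L}(A,\nabla^{L}_{B}Z)
=
2\psi g_{L}(A,B),
\]
as already noted after Definition~\ref{def:lyra-special-vector-fields}.

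Theorem~\ref{thm:conformal-einstein-equivalence} then applies. I will assume the standing convention \(n\geq3\) of the section, since the corollary omits it. This gives \(\operatorname{Ric}^{L}=\mu g_{L}\), with \(\mu=\lambda+\rho R^{L}-\psi\) constant and \(R^{L}=n\mu\). Substituting \(R^{L}=n\mu\) into the expression for \(\mu\) yields \eqref{eq:lambda-conformal-einstein}, namely
\[
\lambda-\psi=(1-n\rho)\mu,
\]
whose right-hand side is constant. This is \eqref{eq:lambda-psi-constant}, and differentiating it gives \eqref{eq:lambda-psi-differential}, \(d\lambda=d\psi\). Conversely, \eqref{eq:lambda-psi-differential} together with \(\lambda=(1-n\rho)\mu+\psi\) shows the two formulations are equivalent on connected \(M\).

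There is no real obstacle; the only point needing care is the dimension hypothesis. The constancy of \(\mu\) comes from the contracted Bianchi identity, and it is this constancy that makes \(\lambda-\psi\) constant. As an optional consistency check, I would note that a closed conformal field satisfies the standard identity \(\operatorname{Ric}^{L}(Z)=-(n-1)\nabla^{L}\psi\), obtained by commuting covariant derivatives in \(\nabla^{L}_{A}Z=\psi A\). Combined with the Einstein condition this gives \(\mu Z=-(n-1)\nabla^{L}\psi\), consistent with the result though not needed for the proof.
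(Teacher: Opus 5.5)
Your proposal is correct and follows essentially the same route as the paper: observe that \(\nabla^{L}_{A}Z=\psi A\) gives \(\mathcal L_{Z}g_{L}=2\psi g_{L}\), then invoke Theorem~\ref{thm:conformal-einstein-equivalence} to get the Einstein condition and \(\lambda=(1-n\rho)\mu+\psi\) with \(\mu\) constant, and differentiate. Your explicit appeal to the section's standing assumption \(n\geq3\) (together with connectedness) is exactly what the paper's argument tacitly relies on, and the Ricci-identity consistency check is a harmless extra.
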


\begin{proof}
	A Lyra closed conformal field satisfies
	\[
	\mathcal L_{Z}g_{L}=2\psi g_{L}.
	\]
	By Theorem~\ref{thm:conformal-einstein-equivalence},
	\[
	\lambda
	=
	(1-n\rho)\mu+\psi.
	\]
	Since \(\mu\) and \(\rho\) are constant, differentiation gives
	\[
	d\lambda=d\psi.
	\]
\end{proof}

\subsection{Rigidity of conformal soliton potentials}
\label{subsec:conformal-potential-rigidity}

\begin{theorem}[Rigidity of non-almost conformal solitons]
	\label{thm:nonalmost-conformal-rigidity}
	Let
	\[
	\left(
	M,g_{L},Z,\lambda,\rho
	\right)
	\]
	be a connected Lyra Ricci--Bourguignon soliton of dimension
	\(n\geq3\); that is, \(\lambda\) is constant. If \(Z\) is Lyra
	conformal, then \(Z\) is necessarily Lyra homothetic.
	
	More precisely, its conformal factor is the constant
	\begin{equation}
		\psi
		=
		\lambda-(1-n\rho)\mu,
		\label{eq:constant-conformal-factor}
	\end{equation}
	where \(\mu\) is the Einstein constant of \(g_{L}\).
\end{theorem}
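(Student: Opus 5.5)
The plan is to reduce the statement directly to Theorem~\ref{thm:conformal-einstein-equivalence}. Assume $\mathcal L_{Z}g_{L}=2\psi g_{L}$ for some smooth $\psi$. That theorem applies because $M$ is connected and $n\geq3$, and it gives two things: $g_{L}$ is Einstein, $\operatorname{Ric}^{L}=\mu g_{L}$ with $\mu$ constant, and \eqref{eq:lambda-conformal-einstein} holds, namely
\[
\lambda=(1-n\rho)\mu+\psi .
\]
Solving for $\psi$ yields exactly \eqref{eq:constant-conformal-factor}.

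Next I check that every term on the right of \eqref{eq:constant-conformal-factor} is constant. The soliton function $\lambda$ is constant by hypothesis, since the structure is a Lyra Ricci--Bourguignon soliton rather than an almost one. The parameter $\rho\in\mathbb R$ is constant by Definition~\ref{def:lyra-arbs}. The Einstein factor $\mu$ is constant by the Bianchi-identity step in the proof of Theorem~\ref{thm:conformal-einstein-equivalence}, which is where connectedness and $n\geq3$ enter. Hence $\psi$ is constant, say $\psi=c$, and $\mathcal L_{Z}g_{L}=2cg_{L}$. This is precisely the definition of a Lyra homothetic field in Definition~\ref{def:lyra-special-vector-fields}.

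There is no real obstacle here. The one point to verify is that the constancy of $\mu$ in Theorem~\ref{thm:conformal-einstein-equivalence} does not itself use constancy of $\lambda$ or $\psi$. It does not: the Bianchi identity alone gives $(1-\tfrac n2)\,d\mu=0$, whatever $\lambda$ and $\psi$ are. As an optional cross-check I would also write $\psi=\lambda+\rho R^{L}-\mu=\lambda+(n\rho-1)\mu$ using $R^{L}=n\mu$ from \eqref{eq:einstein-scalar-curvature}, which confirms the same constant. Finally, I would remark that the case $\psi=0$ is covered as well, recovering Corollary~\ref{cor:killing-potential-einstein}.
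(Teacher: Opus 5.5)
Your proposal is correct and follows the paper's proof essentially verbatim. You invoke Theorem~\ref{thm:conformal-einstein-equivalence} to get $\lambda=(1-n\rho)\mu+\psi$ with $\mu$ constant, then conclude that $\psi$ is constant because $\lambda$, $\rho$ and $\mu$ are. Your added check that the Bianchi step gives $d\mu=0$ independently of $\lambda$ and $\psi$ is a useful clarification rather than a different route.
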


\begin{proof}
	By Theorem~\ref{thm:conformal-einstein-equivalence},
	\(g_{L}\) is Einstein and
	\[
	\lambda=(1-n\rho)\mu+\psi.
	\]
	Both \(\lambda\) and \(\mu\) are constant. Therefore,
	\(\psi\) is constant, and hence \(Z\) is Lyra homothetic.
\end{proof}

\begin{theorem}[Compact conformal rigidity]
	\label{thm:compact-conformal-rigidity}
	Let \((M,g_{L})\) be compact, connected, oriented, and
	Riemannian, with \(n\geq3\). Suppose that
	\[
	\left(
	M,g_{L},Z,\lambda,\rho
	\right)
	\]
	is a Lyra Ricci--Bourguignon soliton and \(Z\) is Lyra
	conformal. Then:
	\begin{enumerate}
		\item \(Z\) is Lyra Killing;
		\item \(g_{L}\) is Einstein;
		\item
		\begin{equation}
			\lambda
			=
			(1-n\rho)\mu.
			\label{eq:compact-conformal-relation}
		\end{equation}
	\end{enumerate}
\end{theorem}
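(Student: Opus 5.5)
By Theorem~\ref{thm:nonalmost-conformal-rigidity}, since \(\lambda\) is constant and \(Z\) is Lyra conformal, \(g_{L}\) is Einstein with constant \(\mu\), and \(Z\) is Lyra homothetic with constant factor \(\psi=c=\lambda-(1-n\rho)\mu\). This already gives item~2. The whole proof therefore reduces to showing \(c=0\). Once that is established, item~1 follows from the definition of a Lyra Killing field, and item~3 follows from \eqref{eq:lambda-conformal-einstein} with \(\psi=0\), exactly as in Corollary~\ref{cor:killing-potential-einstein}.

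To show \(c=0\), I will use a divergence argument on the compact manifold. The conformal equation \(\mathcal L_{Z}g_{L}=2c\,g_{L}\) traces with respect to \(g_{L}\) to
\[
\operatorname{div}_{L}Z=nc .
\]
This is the same identity \(\frac12\operatorname{tr}_{g_{L}}\mathcal L_{Z}g_{L}=\operatorname{div}_{L}Z\) used in the proof of Proposition~\ref{prop:lyra-arbs-trace}. Next I integrate over \(M\) with respect to \(d\mu_{L}\). Because \(M\) is compact, oriented and without boundary, the divergence theorem gives \(\int_{M}\operatorname{div}_{L}Z\,d\mu_{L}=0\). Hence
\[
nc\,\operatorname{Vol}_{L}(M)=0 .
\]
The metric is Riemannian, so \(\operatorname{Vol}_{L}(M)>0\), and with \(n\geq3\) this forces \(c=0\).

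An alternative route, useful as a cross-check, goes through the soliton trace identity \eqref{eq:lyra-arbs-trace}. With \(R^{L}=n\mu\) it gives \(\operatorname{div}_{L}Z=n\lambda-(1-n\rho)n\mu=nc\), and integrating yields the same conclusion. The equivalent Lie-derivative formulation also works: \(\mathcal L_{Z}d\mu_{L}=nc\,d\mu_{L}\) on a compact manifold makes the volume grow by \(e^{nct}\) along the flow, while the flow preserves total volume, so again \(c=0\).

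The main point needing care is the hypothesis that \(M\) has no boundary. The statement says compact, connected and oriented, and I will read this, as in Corollary~\ref{cor:compact-gradient-integral}, as a closed manifold; otherwise the divergence theorem leaves a boundary term. The Riemannian assumption is what guarantees positive volume, and I will point this out explicitly. Once \(c=0\), the three conclusions follow at once.
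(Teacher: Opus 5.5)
Your proposal is correct and is essentially the paper's proof. Theorem~\ref{thm:nonalmost-conformal-rigidity} makes the conformal factor a constant \(c\); tracing \(\mathcal L_{Z}g_{L}=2c\,g_{L}\) gives \(\operatorname{div}_{L}Z=nc\); integrating over the closed manifold forces \(c=0\); and the remaining conclusions follow from Corollary~\ref{cor:killing-potential-einstein}.

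You are right that the argument needs \(\partial M=\emptyset\), which the statement leaves implicit. Note also that \(\operatorname{Vol}_{L}(M)>0\) holds for any nondegenerate metric, since the density is \(\sqrt{\lvert\det g_{L}\rvert}\), so the Riemannian hypothesis is not actually what is used at that step.
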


\begin{proof}
	By Theorem~\ref{thm:nonalmost-conformal-rigidity}, the conformal
	factor \(\psi\) is constant. Taking the \(g_{L}\)-trace of
	\[
	\mathcal L_{Z}g_{L}=2\psi g_{L}
	\]
	gives
	\[
	\operatorname{div}_{L}Z=n\psi.
	\]
	Integration over the compact manifold yields
	\[
	n\psi\operatorname{Vol}_{L}(M)
	=
	\int_{M}\operatorname{div}_{L}Z\,d\mu_{L}
	=
	0.
	\]
	Thus,
	\[
	\psi=0,
	\]
	and \(Z\) is Lyra Killing. The remaining conclusions follow
	from Corollary~\ref{cor:killing-potential-einstein}.
\end{proof}

\subsection{Gradient conformal potentials}
\label{subsec:gradient-conformal-potentials}

Let
\[
Z=\nabla^{L}u
\]
for some \(u\in C^{\infty}(M)\). Then
\[
\mathcal L_{\nabla^{L}u}g_{L}
=
2\operatorname{Hess}^{L}u.
\]

\begin{theorem}[Gradient conformal--Einstein characterization]
	\label{thm:gradient-conformal-einstein}
	Let
	\[
	\left(
	M,g_{L},\nabla^{L}u,\lambda,\rho
	\right)
	\]
	be a gradient Lyra almost Ricci--Bourguignon soliton on a
	connected manifold of dimension \(n\geq3\). Then \(g_{L}\) is
	Einstein if and only if
	\begin{equation}
		\operatorname{Hess}^{L}u
		=
		\psi g_{L}
		\label{eq:gradient-conformal-hessian}
	\end{equation}
	for some smooth function \(\psi\).
	
	In this case,
	\begin{equation}
		\lambda
		=
		(1-n\rho)\mu+\psi,
		\label{eq:gradient-conformal-relation}
	\end{equation}
	where \(\mu\) is the Einstein constant.
\end{theorem}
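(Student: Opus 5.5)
The plan is to reduce the statement to Theorem~\ref{thm:conformal-einstein-equivalence} applied to the potential field \(Z=\nabla^{L}u\). The link is the identity recorded just before the statement,
\[
\mathcal L_{\nabla^{L}u}g_{L}=2\operatorname{Hess}^{L}u .
\]
This follows because \(\nabla^{L}\) is torsion-free and \(g_{L}\)-compatible, so \((\mathcal L_{Z}g_{L})(A,B)=g_{L}(\nabla^{L}_{A}Z,B)+g_{L}(A,\nabla^{L}_{B}Z)\). For \(Z=\nabla^{L}u\) both terms equal \(\operatorname{Hess}^{L}u(A,B)\), by the symmetry of the Lyra Hessian. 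Consequently, condition \eqref{eq:gradient-conformal-hessian} with a smooth function \(\psi\) says exactly that \(\nabla^{L}u\) is Lyra conformal with conformal factor \(\psi\) in the sense of \eqref{eq:lyra-conformal-field}.

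For the direction ``\(\operatorname{Hess}^{L}u=\psi g_{L}\Rightarrow g_{L}\) Einstein'', substitute into the gradient equation \eqref{eq:gradient-lyra-arbs}. This gives \(\operatorname{Ric}^{L}=(\lambda+\rho R^{L}-\psi)g_{L}\), so \(\operatorname{Ric}^{L}=\mu g_{L}\) with \(\mu\) a priori only a function. Constancy of \(\mu\) then follows, as in the proof of Theorem~\ref{thm:conformal-einstein-equivalence}, from the contracted second Bianchi identity. Namely, \(\operatorname{div}_{L}\operatorname{Ric}^{L}=d\mu\) and \(\tfrac12 dR^{L}=\tfrac n2 d\mu\) give \((1-\tfrac n2)d\mu=0\). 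Since \(n\ge3\) and \(M\) is connected, \(\mu\) is constant. Alternatively, one can simply invoke Theorem~\ref{thm:conformal-einstein-equivalence} with \(Z=\nabla^{L}u\).

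For the converse, assume \(\operatorname{Ric}^{L}=\mu g_{L}\) with \(\mu\) constant. Then \eqref{eq:gradient-lyra-arbs} gives
\[
\operatorname{Hess}^{L}u=(\lambda+\rho R^{L}-\mu)g_{L},
\]
so \eqref{eq:gradient-conformal-hessian} holds with the smooth function \(\psi=\lambda+\rho R^{L}-\mu\).

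In either case \(R^{L}=n\mu\) by tracing the Einstein condition. Inserting this into \(\psi=\lambda+\rho R^{L}-\mu\) gives \(\psi=\lambda+n\rho\mu-\mu\), which rearranges to \eqref{eq:gradient-conformal-relation}. As a consistency check, the same relation follows from the gradient trace identity \eqref{eq:gradient-lyra-arbs-trace}, using \(\Delta^{L}u=n\psi\).

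There is no real obstacle beyond verifying \(\mathcal L_{\nabla^{L}u}g_{L}=2\operatorname{Hess}^{L}u\) for the Levi-Civita connection of \(g_{L}\). The only subtle point is the dimension restriction \(n\ge3\), which is needed exactly in the Bianchi step to force \(\mu\) to be constant (cf.\ Remark~\ref{rem:dimension-two-einstein}).
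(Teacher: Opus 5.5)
Your proposal is correct and takes the same route as the paper. Both identify $\operatorname{Hess}^{L}u=\psi g_{L}$ with $\nabla^{L}u$ being Lyra conformal through $\mathcal L_{\nabla^{L}u}g_{L}=2\operatorname{Hess}^{L}u$, and then apply Theorem~\ref{thm:conformal-einstein-equivalence}; your explicit Bianchi step, converse, and trace computation just unpack what the paper cites from that theorem.
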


\begin{proof}
	The condition
	\[
	\operatorname{Hess}^{L}u=\psi g_{L}
	\]
	is equivalent to
	\[
	\mathcal L_{\nabla^{L}u}g_{L}
	=
	2\psi g_{L}.
	\]
	Thus, \(\nabla^{L}u\) is Lyra conformal. The conclusion follows
	directly from
	Theorem~\ref{thm:conformal-einstein-equivalence}.
\end{proof}

Using \eqref{eq:lyra-hessian-function}, condition
\eqref{eq:gradient-conformal-hessian} can be written entirely in
terms of the twisted metric \(g\) as
\begin{align}
	\operatorname{Hess}u
	&-
	du\otimes d\sigma
	-
	d\sigma\otimes du
	\nonumber\\
	&+
	\left\langle
	\nabla u,\nabla\sigma
	\right\rangle_{g}g
	=
	\psi e^{2\sigma}g.
	\label{eq:gradient-conformal-background-form}
\end{align}
Equation~\eqref{eq:gradient-conformal-background-form} will be
used in the construction of explicit examples.

\begin{corollary}[Affine Lyra potential]
	\label{cor:affine-lyra-potential}
	Suppose
	\[
	\operatorname{Hess}^{L}u=0.
	\]
	Then \(\nabla^{L}u\) is Lyra Killing, \(g_{L}\) is Einstein, and
	\begin{equation}
		\lambda
		=
		(1-n\rho)\mu.
		\label{eq:affine-gradient-relation}
	\end{equation}
\end{corollary}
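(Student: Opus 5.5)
This corollary is a direct specialization of Theorem~\ref{thm:gradient-conformal-einstein} to the case \(\psi=0\). The plan has two steps: first show that the potential is Killing, then invoke the conformal--Einstein machinery already established.

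\textbf{Step 1 (Killing).} For \(Z=\nabla^{L}u\), the identity recorded just before Theorem~\ref{thm:gradient-conformal-einstein} gives
\[
\mathcal L_{\nabla^{L}u}g_{L}=2\operatorname{Hess}^{L}u .
\]
The hypothesis \(\operatorname{Hess}^{L}u=0\) therefore yields \(\mathcal L_{\nabla^{L}u}g_{L}=0\). By Definition~\ref{def:lyra-special-vector-fields}, this says precisely that \(\nabla^{L}u\) is Lyra Killing, i.e.\ Lyra conformal with conformal factor \(\psi=0\).

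\textbf{Step 2 (Einstein and the relation for \(\lambda\)).} Assume, as in Theorem~\ref{thm:gradient-conformal-einstein}, that \(M\) is connected and \(n\geq3\). The condition \eqref{eq:gradient-conformal-hessian} holds with \(\psi\equiv0\), so that theorem shows \(g_{L}\) is Einstein with a constant \(\mu\). Setting \(\psi=0\) in \eqref{eq:gradient-conformal-relation} gives \(\lambda=(1-n\rho)\mu\), which is \eqref{eq:affine-gradient-relation}.

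Alternatively, since \(\nabla^{L}u\) is Lyra Killing, one can cite Corollary~\ref{cor:killing-potential-einstein} directly; this also shows that \(\lambda\) is constant.

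The only point requiring care is the constancy of \(\mu\). It comes from the contracted Bianchi identity argument in Theorem~\ref{thm:conformal-einstein-equivalence}, which is valid only when \(n\geq3\). I would therefore state explicitly that the standing assumptions of this section (\(n\geq3\), \(M\) connected) are in force. Otherwise, as Remark~\ref{rem:dimension-two-einstein} notes, one obtains only \(\operatorname{Ric}^{L}=\tfrac{R^{L}}{2}g_{L}\) with \(R^{L}\) possibly nonconstant.
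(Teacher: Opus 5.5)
Your proposal is correct and follows essentially the paper's argument. The paper notes that $\operatorname{Hess}^{L}u=0$ gives $\mathcal L_{\nabla^{L}u}g_{L}=0$ and then cites Corollary~\ref{cor:killing-potential-einstein} directly, which is your stated alternative; your main route through Theorem~\ref{thm:gradient-conformal-einstein} with $\psi=0$ reduces to the same conformal--Einstein equivalence, and your explicit flagging of the standing hypotheses $n\geq3$ and connectedness (left implicit in the corollary) is a worthwhile clarification.
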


\begin{proof}
	When \(\operatorname{Hess}^{L}u=0\),
	\[
	\mathcal L_{\nabla^{L}u}g_{L}=0.
	\]
	The result follows from
	Corollary~\ref{cor:killing-potential-einstein}.
\end{proof}

\begin{corollary}[Compact gradient rigidity]
	\label{cor:compact-gradient-conformal-rigidity}
	Let \((M,g_{L})\) be compact, connected, and Riemannian, with
	\(n\geq3\). If
	\[
	\left(
	M,g_{L},\nabla^{L}u,\lambda,\rho
	\right)
	\]
	is a Lyra Ricci--Bourguignon soliton and
	\(\nabla^{L}u\) is Lyra conformal, then \(u\) is constant and
	the soliton is trivial.
\end{corollary}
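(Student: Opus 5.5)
The argument reduces to Theorem~\ref{thm:compact-conformal-rigidity}, followed by an integration step that forces \(u\) to be constant. Since \(\nabla^{L}u\) is Lyra conformal and \(\lambda\) is constant, Theorem~\ref{thm:nonalmost-conformal-rigidity} shows that the conformal factor \(\psi\) is constant. Then Theorem~\ref{thm:compact-conformal-rigidity}, which applies because \(M\) is compact, connected and Riemannian with \(n\geq3\), gives \(\psi=0\). Hence \(\nabla^{L}u\) is Lyra Killing, \(g_{L}\) is Einstein, and \(\lambda=(1-n\rho)\mu\). The orientability assumed in that theorem is harmless: if needed we pass to the oriented double cover, or we replace the divergence theorem with integration against the Riemannian density.

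Since \(\mathcal L_{\nabla^{L}u}g_{L}=2\operatorname{Hess}^{L}u\), the Killing condition reads \(\operatorname{Hess}^{L}u=0\). Tracing with respect to \(g_{L}\) gives \(\Delta^{L}u=0\), so \(u\) is harmonic for the Riemannian metric \(g_{L}\) on a compact connected manifold without boundary. Integrating \(\operatorname{div}_{L}(u\nabla^{L}u)=|\nabla^{L}u|_{g_{L}}^{2}+u\Delta^{L}u\) yields
\[
\int_{M}|\nabla^{L}u|_{g_{L}}^{2}\,d\mu_{L}=0 .
\]
Positive definiteness of \(g_{L}\) then forces \(\nabla^{L}u=0\), and connectedness makes \(u\) constant. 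Alternatively, the maximum principle applied to the harmonic function \(u\) gives the same conclusion.

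Finally, \(Z=\nabla^{L}u=0\), so the soliton equation reduces to \(\operatorname{Ric}^{L}=(\lambda+\rho R^{L})g_{L}\) with \(g_{L}\) Einstein. This is the trivial soliton in the sense that the potential vanishes, and the relation \(\lambda=(1-n\rho)\mu\) from Corollary~\ref{cor:killing-potential-einstein} holds.

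The only delicate point is making sure the compactness argument of Theorem~\ref{thm:compact-conformal-rigidity} applies with no extra hypotheses. Everything else is the standard Bochner-free integration above, and the Riemannian signature is what makes the vanishing of \(\int|\nabla^{L}u|^{2}\) conclusive.
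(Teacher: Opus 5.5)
Your proposal is correct and follows essentially the same route as the paper. You invoke Theorem~\ref{thm:compact-conformal-rigidity} to make $\nabla^{L}u$ Lyra Killing, so $\operatorname{Hess}^{L}u=0$; tracing gives $\Delta^{L}u=0$, and a harmonic function on a compact connected Riemannian manifold is constant. Your integration of $\operatorname{div}_{L}(u\nabla^{L}u)$ just spells out that last fact, which the paper quotes. Your remark on orientability (oriented double cover or the Riemannian density) usefully covers a hypothesis that Theorem~\ref{thm:compact-conformal-rigidity} assumes but the corollary omits.
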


\begin{proof}
	By Theorem~\ref{thm:compact-conformal-rigidity},
	\(\nabla^{L}u\) is Lyra Killing. Hence,
	\[
	\operatorname{Hess}^{L}u=0.
	\]
	Taking the trace gives
	\[
	\Delta^{L}u=0.
	\]
	A harmonic function on a compact connected Riemannian manifold
	is constant. Therefore,
	\[
	u=\mathrm{constant}
	\]
	and
	\[
	\nabla^{L}u=0.
	\]
\end{proof}

\subsection{Einstein inheritance by the factor manifolds}
\label{subsec:einstein-factor-inheritance}

The factor inheritance criteria obtained in
Section~\ref{sec:factor-inheritance-rigidity} can now be combined
with Theorem~\ref{thm:conformal-einstein-equivalence}.

\begin{theorem}[Einstein reduction on the base]
	\label{thm:base-einstein-reduction}
	Assume the base-adapted conditions
	\eqref{eq:base-adapted-data} and the inheritance condition
	\eqref{eq:base-inheritance-condition}. Let
	\[
	n_{1}\geq3.
	\]
	For each fixed \(y\in M_{2}\), let
	\[
	\lambda_{1,y}
	\]
	be the inherited soliton function given by
	\eqref{eq:base-inherited-soliton-function}. Then the following
	conditions are equivalent:
	\begin{enumerate}
		\item \(g_{L,1}=e^{2\sigma_{1}}g_{1}\) is Einstein;
		\item \(Z_{1}\) is conformal with respect to \(g_{L,1}\).
	\end{enumerate}
	
	If
	\begin{equation}
		\mathcal L_{Z_{1}}g_{L,1}
		=
		2\psi_{1}g_{L,1},
		\label{eq:base-lyra-conformal-field}
	\end{equation}
	then
	\begin{equation}
		\operatorname{Ric}^{L,1}
		=
		\mu_{1}g_{L,1},
		\label{eq:base-lyra-einstein}
	\end{equation}
	where \(\mu_{1}\) is constant on each connected base slice and
	\begin{equation}
		\lambda_{1,y}
		=
		(1-n_{1}\rho)\mu_{1}
		+
		\psi_{1}.
		\label{eq:base-einstein-relation}
	\end{equation}
\end{theorem}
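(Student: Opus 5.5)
The plan is to reduce the statement to Theorem~\ref{thm:conformal-einstein-equivalence}, applied on each base slice $M_{1}\times\{y\}$. By Theorem~\ref{thm:base-inheritance}, under \eqref{eq:base-adapted-data} and \eqref{eq:base-inheritance-condition} the slice satisfies the intrinsic equation
\[
\operatorname{Ric}^{L,1}+\tfrac12\mathcal L_{Z_{1}}g_{L,1}=\bigl(\lambda_{1,y}+\rho R^{L,1}\bigr)g_{L,1}.
\]
Here $\lambda_{1,y}$ is given by \eqref{eq:base-inherited-soliton-function}. Because $\sigma=\sigma_{1}$ and $g_{L,1}=e^{2\sigma_{1}}g_{1}$ are independent of $y$, the quadruple $(M_{1},g_{L,1},Z_{1},\lambda_{1,y},\rho)$ is, for each fixed $y$, a genuine almost Ricci--Bourguignon soliton on the $n_{1}$-dimensional manifold $M_{1}$. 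Its soliton function $\lambda_{1,y}$ is an arbitrary smooth function on the slice, which is allowed by Definition~\ref{def:lyra-arbs} with $\sigma=0$ relative to $g_{L,1}$.

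The proof then repeats the argument of Theorem~\ref{thm:conformal-einstein-equivalence} verbatim on $(M_{1},g_{L,1})$.

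First, suppose \eqref{eq:base-lyra-conformal-field} holds. Substituting into the slice equation gives $\operatorname{Ric}^{L,1}=\mu_{1}g_{L,1}$ with
\[
\mu_{1}=\lambda_{1,y}+\rho R^{L,1}-\psi_{1}.
\]
Tracing gives $R^{L,1}=n_{1}\mu_{1}$. The contracted Bianchi identity for $g_{L,1}$ then gives $d\mu_{1}=\tfrac{n_{1}}{2}d\mu_{1}$. Since $n_{1}\ge3$, it follows that $d\mu_{1}=0$, so $\mu_{1}$ is constant on each connected slice. Here the hypothesis $n_{1}\geq3$ replaces $n\geq3$, in line with Remark~\ref{rem:dimension-two-einstein}.

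Conversely, if $\operatorname{Ric}^{L,1}=\mu_{1}g_{L,1}$, the slice equation shows that $\tfrac12\mathcal L_{Z_{1}}g_{L,1}$ is a multiple of $g_{L,1}$. This means $Z_{1}$ is $g_{L,1}$-conformal with factor
\[
\psi_{1}=\lambda_{1,y}+\rho R^{L,1}-\mu_{1}.
\]
Finally, inserting $R^{L,1}=n_{1}\mu_{1}$ into the expression for $\mu_{1}$ yields \eqref{eq:base-einstein-relation}.

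The only point I expect to need care is the dependence on the fiber point. The metric $g_{L,1}$ and the field $Z_{1}$ do not depend on $y$, but $\lambda_{1,y}$ may, through $\lambda$, $R^{L}$, $\beta_{1}$ and $b_{1}$. Consequently $\psi_{1}$ and $\mu_{1}$ should a priori be treated as $y$-dependent.

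This dependence is harmless for the logic. The Einstein condition on $g_{L,1}$ is intrinsic to $M_{1}$, so $\mu_{1}=R^{L,1}/n_{1}$ is in fact independent of $y$. The statement only asserts constancy of $\mu_{1}$ on each slice, and that is exactly what the slicewise Bianchi argument delivers. I will also note that the equivalence in items 1 and 2 is slicewise, with $\psi_{1}$ allowed to vary with $y$.
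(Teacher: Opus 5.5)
Your proposal is correct and follows the paper's proof: you invoke Theorem~\ref{thm:base-inheritance} to get the induced soliton equation on $(M_{1},g_{L,1},Z_{1})$ and then run Theorem~\ref{thm:conformal-einstein-equivalence} in dimension $n_{1}\geq 3$, writing out its trace and Bianchi argument where the paper simply cites it. Your caution about $y$-dependence is unnecessary, though harmless. Both $\psi_{1}=\tfrac{1}{n_{1}}\operatorname{div}_{g_{L,1}}Z_{1}$ and $\mu_{1}=R^{L,1}/n_{1}$ are intrinsic to $M_{1}$, so $\psi_{1}$ cannot vary with $y$ either. Moreover, tracing the induced equation shows that $\lambda+\rho R^{L}$, and hence $\lambda_{1,y}$, is already independent of $y$.
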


\begin{proof}
	By Theorem~\ref{thm:base-inheritance}, the base satisfies
	\[
	\operatorname{Ric}^{L,1}
	+
	\frac12
	\mathcal L_{Z_{1}}g_{L,1}
	=
	\left(
	\lambda_{1,y}
	+
	\rho R^{L,1}
	\right)g_{L,1}.
	\]
	Applying
	Theorem~\ref{thm:conformal-einstein-equivalence} in dimension
	\(n_{1}\) proves the equivalence and gives
	\eqref{eq:base-einstein-relation}.
\end{proof}

\begin{corollary}
	\label{cor:total-conformal-base-einstein}
	Under the hypotheses of
	Theorem~\ref{thm:base-einstein-reduction}, suppose that the
	total potential \(Z\) is Lyra conformal with factor \(\psi\).
	Then
	\[
	\mathcal L_{Z_{1}}g_{L,1}
	=
	2\psi g_{L,1},
	\]
	and consequently \(g_{L,1}\) is Einstein with
	\begin{equation}
		\lambda_{1,y}
		=
		(1-n_{1}\rho)\mu_{1}
		+
		\psi.
		\label{eq:base-total-conformal-relation}
	\end{equation}
\end{corollary}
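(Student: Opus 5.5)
The plan is to reduce the base Lie derivative to the total one using the base-adapted hypothesis, and then invoke Theorem~\ref{thm:base-einstein-reduction}. Since \(\sigma=\sigma_{1}\), identity \eqref{eq:base-lie-inheritance} from the proof of Theorem~\ref{thm:base-inheritance} gives, for horizontal \(X,Y\),
\[
(\mathcal L_{Z}g_{L})(X,Y)=(\mathcal L_{Z_{1}}g_{L,1})(X,Y).
\]
This can also be read off from \eqref{eq:lyra-lie-horizontal}, because \(e^{2\sigma_{1}}[\mathcal L_{Z_{1}}g_{1}+2Z_{1}(\sigma_{1})g_{1}]=\mathcal L_{Z_{1}}(e^{2\sigma_{1}}g_{1})\) and \(Z(\sigma_{1})=Z_{1}(\sigma_{1})\).

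If \(Z\) is Lyra conformal with factor \(\psi\), then restricting \(\mathcal L_{Z}g_{L}=2\psi g_{L}\) to horizontal pairs and using \(g_{L}(X,Y)=g_{L,1}(X,Y)\) gives
\[
\mathcal L_{Z_{1}}g_{L,1}=2\psi g_{L,1}
\]
on each slice \(M_{1}\times\{y\}\). Here \(\psi\) is understood as restricted to the slice. An equivalent route is Proposition~\ref{prop:factorwise-conformal-characterization}: it gives \(\mathcal L_{Z_{1}}g_{1}=2(\psi-Z(\sigma_{1}))g_{1}\), and multiplying by \(e^{2\sigma_{1}}\) and adding \(2Z_{1}(\sigma_{1})g_{L,1}\) yields the same identity.

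Thus \(Z_{1}\) is conformal for \(g_{L,1}\) with factor \(\psi_{1}=\psi|_{M_{1}\times\{y\}}\). Theorem~\ref{thm:base-einstein-reduction} then applies, using \(n_{1}\geq3\) and the inheritance condition. It shows that \(g_{L,1}\) is Einstein with constant \(\mu_{1}\), and substituting \(\psi_{1}=\psi\) into \eqref{eq:base-einstein-relation} gives \eqref{eq:base-total-conformal-relation}.

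The only delicate point is that \(\psi\) is a priori a function on \(M\), whereas the base statement is slicewise. The argument therefore has to be carried out on each fixed slice, where \(\psi\) restricts to a function on \(M_{1}\); this matches how \(\lambda_{1,y}\) is defined. No further computation is needed.
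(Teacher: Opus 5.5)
Your proposal is correct and follows the paper's proof: when \(\sigma=\sigma_{1}\), the identity \((\mathcal L_{Z}g_{L})(X,Y)=(\mathcal L_{Z_{1}}g_{L,1})(X,Y)\) turns the total conformal condition into \(\mathcal L_{Z_{1}}g_{L,1}=2\psi g_{L,1}\), and Theorem~\ref{thm:base-einstein-reduction} with \(\psi_{1}=\psi\) then gives the result. Your slicewise caveat about \(\psi\) is harmless, and in fact unnecessary: \(\mathcal L_{Z_{1}}g_{L,1}\) and \(g_{L,1}\) both live on \(M_{1}\), so comparing the two sides at different fiber points shows that \(\psi\) is automatically independent of the fiber variable.
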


\begin{proof}
	Under \(\sigma=\sigma_{1}\), the horizontal identity
	\eqref{eq:base-lie-inheritance} gives
	\[
	(\mathcal L_{Z}g_{L})(X,Y)
	=
	(\mathcal L_{Z_{1}}g_{L,1})(X,Y).
	\]
	The total conformal condition therefore implies
	\[
	\mathcal L_{Z_{1}}g_{L,1}
	=
	2\psi g_{L,1}.
	\]
	The result follows from
	Theorem~\ref{thm:base-einstein-reduction}.
\end{proof}

\begin{theorem}[Einstein reduction on the fiber slices]
	\label{thm:fiber-einstein-reduction}
	Assume the fiber-adapted conditions
	\eqref{eq:fiber-adapted-data} and the inheritance condition
	\eqref{eq:fiber-inheritance-condition}. Let
	\[
	n_{2}\geq3.
	\]
	For each fixed \(x\in M_{1}\), let
	\[
	h_{2,x}
	=
	e^{2k_{1}(x)}\widehat g_{2}
	\]
	be the fiber metric from
	\eqref{eq:fiber-slice-metric}, and let
	\(\lambda_{2,x}\) be given by
	\eqref{eq:fiber-inherited-soliton-function}. Then the following
	conditions are equivalent:
	\begin{enumerate}
		\item \(h_{2,x}\) is Einstein;
		\item \(Z_{2}\) is conformal with respect to \(h_{2,x}\).
	\end{enumerate}
	
	If
	\begin{equation}
		\mathcal L_{Z_{2}}h_{2,x}
		=
		2\psi_{2,x}h_{2,x},
		\label{eq:fiber-slice-conformal-field}
	\end{equation}
	then
	\begin{equation}
		\operatorname{Ric}_{h_{2,x}}
		=
		\mu_{2,x}h_{2,x},
		\label{eq:fiber-slice-einstein}
	\end{equation}
	where
	\begin{equation}
		\lambda_{2,x}
		=
		(1-n_{2}\rho)\mu_{2,x}
		+
		\psi_{2,x}.
		\label{eq:fiber-einstein-relation}
	\end{equation}
\end{theorem}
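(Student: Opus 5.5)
The proof mirrors that of Theorem~\ref{thm:base-einstein-reduction}, with Theorem~\ref{thm:fiber-inheritance} in place of Theorem~\ref{thm:base-inheritance}.

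First, fix \(x\in M_{1}\). Under \eqref{eq:fiber-adapted-data} and the inheritance condition \eqref{eq:fiber-inheritance-condition}, Theorem~\ref{thm:fiber-inheritance} gives the induced equation on the slice:
\[
\operatorname{Ric}_{h_{2,x}}
+
\frac12
\mathcal L_{Z_{2}}h_{2,x}
=
\left(
\lambda_{2,x}
+
\rho R_{h_{2,x}}
\right)h_{2,x},
\]
with \(\lambda_{2,x}\) given by \eqref{eq:fiber-inherited-soliton-function}. This holds on the \(n_{2}\)-dimensional manifold \((M_{2},h_{2,x})\), so \((M_{2},h_{2,x},Z_{2},\lambda_{2,x},\rho)\) is an ordinary almost Ricci--Bourguignon soliton. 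Formally it is the case \(\sigma=0\) of Definition~\ref{def:lyra-arbs} in dimension \(n_{2}\).

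Second, I apply Theorem~\ref{thm:conformal-einstein-equivalence} to this slice soliton. Its proof uses only the soliton equation, the trace, and the contracted second Bianchi identity, so it applies verbatim in dimension \(n_{2}\geq3\).

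\begin{itemize}
\item If \(\mathcal L_{Z_{2}}h_{2,x}=2\psi_{2,x}h_{2,x}\), substitution gives \(\operatorname{Ric}_{h_{2,x}}=\mu_{2,x}h_{2,x}\) with \(\mu_{2,x}=\lambda_{2,x}+\rho R_{h_{2,x}}-\psi_{2,x}\). The identity \(\operatorname{div}\operatorname{Ric}=\tfrac12 dR\) then yields \((1-\tfrac{n_{2}}{2})d\mu_{2,x}=0\), so \(\mu_{2,x}\) is constant on each connected slice.
\item Conversely, if \(h_{2,x}\) is Einstein with constant \(\mu_{2,x}\), then \(\tfrac12\mathcal L_{Z_{2}}h_{2,x}=(\lambda_{2,x}+\rho R_{h_{2,x}}-\mu_{2,x})h_{2,x}\), so \(Z_{2}\) is conformal for \(h_{2,x}\).
\end{itemize}

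Third, for the relation, I take the trace to get \(R_{h_{2,x}}=n_{2}\mu_{2,x}\). Substituting into the definition of \(\mu_{2,x}\) gives \(\mu_{2,x}=\lambda_{2,x}+n_{2}\rho\mu_{2,x}-\psi_{2,x}\), which rearranges to \eqref{eq:fiber-einstein-relation}.

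The one point needing care is that Theorem~\ref{thm:conformal-einstein-equivalence} requires connectedness, which I assume for \(M_{2}\) as in the base case. Constancy of \(\mu_{2,x}\) is then only along each slice; it may vary with \(x\), consistent with Remark~\ref{rem:genuine-fiber-inheritance}. I will also note that \(\rho\) is a genuine constant in the inherited equation, so the Bianchi argument is unaffected by the \(x\)-dependence of \(\lambda_{2,x}\).
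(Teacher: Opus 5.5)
Your proposal is correct and matches the paper's proof: obtain the inherited almost Ricci--Bourguignon equation on each slice $(M_{2},h_{2,x})$ from Theorem~\ref{thm:fiber-inheritance}, then apply Theorem~\ref{thm:conformal-einstein-equivalence} in dimension $n_{2}\geq 3$. Beyond that, you only write out the trace and contracted-Bianchi steps that the paper cites, and you state explicitly the connectedness of $M_{2}$ (with $\mu_{2,x}$ allowed to depend on $x$), which the paper uses without saying so.
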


\begin{proof}
	Theorem~\ref{thm:fiber-inheritance} gives
	\[
	\operatorname{Ric}_{h_{2,x}}
	+
	\frac12\mathcal L_{Z_{2}}h_{2,x}
	=
	\left(
	\lambda_{2,x}
	+
	\rho R_{h_{2,x}}
	\right)h_{2,x}.
	\]
	The conclusion follows by applying
	Theorem~\ref{thm:conformal-einstein-equivalence} in dimension
	\(n_{2}\).
\end{proof}

\begin{corollary}
	\label{cor:total-conformal-fiber-einstein}
	Under the hypotheses of
	Theorem~\ref{thm:fiber-einstein-reduction}, suppose that the
	total potential \(Z\) is Lyra conformal with factor \(\psi\).
	Then
	\begin{equation}
		\mathcal L_{Z_{2}}h_{2,x}
		=
		2
		\left[
		\psi-Z_{1}(k_{1})
		\right]
		h_{2,x}.
		\label{eq:fiber-induced-conformal-factor}
	\end{equation}
	Consequently, \(h_{2,x}\) is Einstein and
	\begin{equation}
		\lambda_{2,x}
		=
		(1-n_{2}\rho)\mu_{2,x}
		+
		\psi
		-
		Z_{1}(k_{1}).
		\label{eq:fiber-total-conformal-relation}
	\end{equation}
\end{corollary}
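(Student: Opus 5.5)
The argument has two parts. First we compute \(\mathcal L_{Z_{2}}h_{2,x}\) from the total conformal condition. Then we feed the result into Theorem~\ref{thm:fiber-einstein-reduction}. This mirrors the proof of Corollary~\ref{cor:total-conformal-base-einstein}. The only difference is that the vertical Lie derivative carries the extra term \(2Z_{1}(k_{1})\) recorded in \eqref{eq:fiber-lie-inheritance}.

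First, assume \(\mathcal L_{Z}g_{L}=2\psi g_{L}\) and evaluate on vertical fields \(U,V\). Under \eqref{eq:fiber-adapted-data} we have \(\sigma=\sigma_{2}\) and \(k=k_{1}+k_{2}\). Hence \(g_{L}(U,V)=e^{2k_{1}}e^{2\omega_{2}}g_{2}(U,V)=h_{2,x}(U,V)\) on the slice \(\{x\}\times M_{2}\). By \eqref{eq:fiber-lie-inheritance},
\[
(\mathcal L_{Z}g_{L})(U,V)=(\mathcal L_{Z_{2}}h_{2,x})(U,V)+2Z_{1}(k_{1})h_{2,x}(U,V).
\]
This identity can be double-checked from \eqref{eq:lyra-lie-vertical}. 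Since \(Z(\omega)=Z_{1}(k_{1})+Z_{2}(\omega_{2})\), the term \(e^{2k_{1}}e^{2\omega_{2}}[\mathcal L_{Z_{2}}g_{2}+2Z_{2}(\omega_{2})g_{2}]\) is exactly \(\mathcal L_{Z_{2}}h_{2,x}\), because \(k_{1}(x)\) is constant along the slice. Equating the displayed expression with \(2\psi h_{2,x}(U,V)\) gives \eqref{eq:fiber-induced-conformal-factor}.

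Second, \eqref{eq:fiber-induced-conformal-factor} says that \(Z_{2}\) is conformal with respect to \(h_{2,x}\), with factor \(\psi_{2,x}=\psi-Z_{1}(k_{1})\). This factor is a priori a function on the slice, which is all that Theorem~\ref{thm:fiber-einstein-reduction} requires. Since the inheritance condition \eqref{eq:fiber-inheritance-condition} holds and \(n_{2}\ge3\), Theorem~\ref{thm:fiber-einstein-reduction} shows that \(h_{2,x}\) is Einstein with constant \(\mu_{2,x}\). Substituting \(\psi_{2,x}\) into \eqref{eq:fiber-einstein-relation} yields \eqref{eq:fiber-total-conformal-relation}.

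The only delicate point is the bookkeeping in the first step. One must confirm that the \(Z_{2}(\omega_{2})\) contribution is absorbed into \(\mathcal L_{Z_{2}}h_{2,x}\) and not counted twice. One must also confirm that \(h_{2,x}\) coincides with the restriction of \(g_{L}\) to the slice, so that the conformal factors compare directly. Once \eqref{eq:fiber-lie-inheritance} is accepted from the proof of Theorem~\ref{thm:fiber-inheritance}, the rest is immediate.
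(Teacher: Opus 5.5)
Your proposal is correct and follows the paper's proof. You restrict the total conformal condition to vertical fields via \eqref{eq:fiber-lie-inheritance}, identify $g_{L}|_{\mathcal V\times\mathcal V}$ with $h_{2,x}$ to read off the factor $\psi-Z_{1}(k_{1})$, and then invoke Theorem~\ref{thm:fiber-einstein-reduction}; your extra check of \eqref{eq:fiber-lie-inheritance} from \eqref{eq:lyra-lie-vertical} (using $Z(\omega)=Z_{1}(k_{1})+Z_{2}(\omega_{2})$ and $Z_{2}(k_{1})=0$) is also correct and makes explicit a step the paper takes for granted.
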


\begin{proof}
	Using \eqref{eq:fiber-lie-inheritance}, we have
	\[
	\mathcal L_{Z}g_{L}\big|_{\mathcal V\times\mathcal V}
	=
	\mathcal L_{Z_{2}}h_{2,x}
	+
	2Z_{1}(k_{1})h_{2,x}.
	\]
	Since the total field is Lyra conformal,
	\[
	\mathcal L_{Z}g_{L}\big|_{\mathcal V\times\mathcal V}
	=
	2\psi h_{2,x}.
	\]
	Therefore,
	\[
	\mathcal L_{Z_{2}}h_{2,x}
	=
	2
	\left[
	\psi-Z_{1}(k_{1})
	\right]
	h_{2,x}.
	\]
	The result follows from
	Theorem~\ref{thm:fiber-einstein-reduction}.
\end{proof}

\begin{remark}
	\label{rem:critical-factor-parameters}
	At the critical values
	\[
	\rho=\frac{1}{n_{1}}
	\qquad\text{or}\qquad
	\rho=\frac{1}{n_{2}},
	\]
	the corresponding factor equations become
	\[
	\lambda_{1,y}=\psi_{1}
	\qquad\text{or}\qquad
	\lambda_{2,x}=\psi_{2,x}.
	\]
	In these cases, the Einstein constants are not determined by
	the traced soliton equations alone.
\end{remark}

Theorem~\ref{thm:conformal-einstein-equivalence} shows that
conformal potential fields and Einstein Lyra metrics are
equivalent within the almost Ricci--Bourguignon soliton class.
Corollaries~\ref{cor:killing-potential-einstein}--
\ref{cor:concurrent-potential-einstein} give the corresponding
Killing, homothetic, and concurrent reductions, while
Theorems~\ref{thm:base-einstein-reduction} and
\ref{thm:fiber-einstein-reduction} transfer these conclusions to
the inherited factor structures. 

\section{Examples and Nonexistence Results}
\label{sec:examples-nonexistence}

This section illustrates the principal geometric phenomena obtained
in the preceding sections. The first two examples provide exact
Lyra Ricci--Bourguignon solitons with flat and nonflat Einstein
metrics. The third example shows that the Lyra scale can compensate
for a genuinely nonseparable twisting function. We then give local
and global nonexistence consequences of the mixed and traced
soliton equations.

\subsection{Exact Lyra Ricci--Bourguignon solitons}
\label{subsec:exact-lyra-solitons}

\begin{example}[Scale--warping cancellation]
	\label{ex:scale-warping-cancellation}
	
	Let
	\[
	M_{1}\subset\mathbb R^{n_{1}},
	\qquad
	M_{2}=\mathbb R^{n_{2}},
	\]
	with Euclidean metrics \(\overline g_{1}\) and \(g_{2}\).
	For an arbitrary function \(s\in C^{\infty}(M_{1})\), define
	\begin{equation}
		g_{1}=e^{-2s}\overline g_{1},
		\qquad
		f=e^{-s},
		\qquad
		\varphi=e^{s}.
		\label{eq:scale-cancellation-data}
	\end{equation}
	Then
	\[
	k=-s,
	\qquad
	\sigma=s,
	\qquad
	f\varphi=1,
	\]
	and the Lyra metric becomes
	\begin{equation}
		g_{L}
		=
		e^{2s}
		\left(
		g_{1}\oplus f^{2}g_{2}
		\right)
		=
		\overline g_{1}\oplus g_{2}.
		\label{eq:scale-cancellation-metric}
	\end{equation}
	Hence,
	\begin{equation}
		\operatorname{Ric}^{L}=0,
		\qquad
		R^{L}=0.
		\label{eq:scale-cancellation-curvature}
	\end{equation}
	
	Let
	\begin{equation}
		Z
		=
		c
		\left(
		\sum_{i=1}^{n_{1}}
		x_{i}\frac{\partial}{\partial x_{i}}
		+
		\sum_{\alpha=1}^{n_{2}}
		y_{\alpha}\frac{\partial}{\partial y_{\alpha}}
		\right),
		\label{eq:scale-cancellation-potential}
	\end{equation}
	where \(c\in\mathbb R\). Then
	\[
	\mathcal L_{Z}g_{L}=2c\,g_{L},
	\]
	and therefore
	\[
	\left(
	M,g_{L},Z,\lambda=c,\rho
	\right)
	\]
	is a Lyra Ricci--Bourguignon soliton for every
	\(\rho\in\mathbb R\).
\end{example}

\begin{proof}
	Using \eqref{eq:scale-cancellation-curvature}, the soliton
	equation \eqref{eq:lyra-arbs-equation} reduces to
	\[
	\frac12\mathcal L_{Z}g_{L}
	=
	\lambda g_{L}.
	\]
	Since \(Z\) is homothetic with factor \(c\), the equation holds
	for \(\lambda=c\).
\end{proof}

\begin{remark}
	\label{rem:scale-cancellation-significance}
	
	The condition \(f\varphi=1\) is precisely the normalized
	scale--twisting compensation
	\eqref{eq:normalized-compensation}. Moreover,
	\[
	\mathcal B_{1}=0,
	\qquad
	b_{1}=0,
	\]
	so the base inheritance defects in
	\eqref{eq:base-inheritance-tensor} and
	\eqref{eq:base-inheritance-scalar} vanish identically.
	Thus, this example simultaneously realizes scale compensation,
	factor inheritance, and the homothetic Einstein reduction.
\end{remark}

\begin{example}[A nonflat Einstein Lyra soliton]
	\label{ex:hyperbolic-lyra-soliton}
	
	Let
	\[
	M_{1}=\mathbb R^{n-1},
	\qquad
	M_{2}=\mathbb R,
	\qquad
	n\geq3,
	\]
	with coordinates
	\((x_{1},\ldots,x_{n-1},t)\), and define
	\begin{equation}
		g_{1}
		=
		\sum_{i=1}^{n-1}dx_{i}^{2},
		\qquad
		g_{2}=dt^{2},
		\qquad
		f=e^{-at},
		\qquad
		\varphi=e^{at},
		\label{eq:hyperbolic-data}
	\end{equation}
	where \(a\neq0\). Then
	\begin{equation}
		g_{L}
		=
		dt^{2}
		+
		e^{2at}
		\sum_{i=1}^{n-1}dx_{i}^{2}.
		\label{eq:hyperbolic-lyra-metric}
	\end{equation}
	This metric has constant sectional curvature \(-a^{2}\), and
	hence
	\begin{equation}
		\operatorname{Ric}^{L}
		=
		-(n-1)a^{2}g_{L},
		\qquad
		R^{L}
		=
		-n(n-1)a^{2}.
		\label{eq:hyperbolic-lyra-curvature}
	\end{equation}
	
	For the Killing field
	\[
	Z=\frac{\partial}{\partial x_{1}},
	\]
	the soliton equation holds with
	\begin{equation}
		\lambda
		=
		(n\rho-1)(n-1)a^{2}.
		\label{eq:hyperbolic-soliton-function}
	\end{equation}
\end{example}

\begin{proof}
	Since \(\mathcal L_{Z}g_{L}=0\), the assertion follows directly
	from Corollary~\ref{cor:killing-potential-einstein} with
	Einstein constant
	\[
	\mu=-(n-1)a^{2}.
	\]
\end{proof}

\subsection{A genuinely twisted scale-compensated family}
\label{subsec:genuine-scale-compensation}

The next example demonstrates the main distinction between the
ordinary and Lyra settings.

\begin{example}[A nonseparable mixed-compatible family]
	\label{ex:genuine-scale-compensation}
	
	Let
	\[
	M_{1}=\mathbb R,
	\qquad
	M_{2}=\mathbb R^{q},
	\qquad
	q>1,
	\]
	with Euclidean metrics and coordinates
	\[
	x,
	\qquad
	(y_{1},\ldots,y_{q}).
	\]
	Since \(n_{1}=1\) and \(n_{2}=q\), the exponent in
	\eqref{eq:compensation-exponent} is
	\[
	\alpha
	=
	\frac{n-2}{n_{2}-1}
	=
	1.
	\]
	
	Choose constants \(a,b\in\mathbb R\) and
	\(\kappa\in C^{\infty}(M_{2})\), and define
	\begin{equation}
		\sigma=ay_{1},
		\qquad
		k
		=
		bx\,e^{ay_{1}}
		+
		\kappa(y).
		\label{eq:genuine-compensation-data}
	\end{equation}
	Equivalently,
	\begin{equation}
		\varphi=e^{ay_{1}},
		\qquad
		f
		=
		\exp
		\left(
		bx\,e^{ay_{1}}+\kappa(y)
		\right).
		\label{eq:genuine-compensation-functions}
	\end{equation}
	
	For \(X=\partial_{x}\),
	\begin{equation}
		e^{-\sigma}X(k)
		=
		b.
		\label{eq:genuine-compensation-identity}
	\end{equation}
	Hence,
	\[
	U\!\left(
	e^{-\sigma}X(k)
	\right)=0
	\]
	for every vertical field \(U\). By
	Theorem~\ref{thm:fiber-scale-compensation}, the mixed soliton
	equation \eqref{eq:lyra-arbs-mixed} is satisfied.
\end{example}

If \(ab\neq0\), then
\begin{equation}
	\frac{\partial^{2}k}
	{\partial x\,\partial y_{1}}
	=
	ab e^{ay_{1}}
	\neq0.
	\label{eq:genuine-compensation-nonseparable}
\end{equation}
Therefore, \(k\) is not additively separable and \(f\) is genuinely
twisted. In contrast, when the Lyra scale is constant, the ordinary
mixed condition requires \(XU(k)=0\). Thus,
\eqref{eq:genuine-compensation-data} explicitly shows that the Lyra
scale permits nonseparable twisting through the weighted
compensation law.

The remaining horizontal and vertical equations
\eqref{eq:lyra-arbs-horizontal} and
\eqref{eq:lyra-arbs-vertical} determine the admissible potential
field and soliton function for any complete soliton built from this
family.

\subsection{Local rigidity and nonexistence}
\label{subsec:local-rigidity-nonexistence}

\begin{proposition}[Local mixed obstruction]
	\label{prop:local-mixed-obstruction}
	
	Let
	\[
	M_{1}=\mathbb R,
	\qquad
	M_{2}=\mathbb R^{q},
	\qquad
	q>1,
	\]
	and consider
	\begin{equation}
		k=xy_{1},
		\qquad
		\sigma=s(x),
		\label{eq:local-obstruction-data}
	\end{equation}
	where \(s\in C^{\infty}(\mathbb R)\). Then no Lyra almost
	Ricci--Bourguignon soliton with projectable potential field
	exists on any product neighborhood.
\end{proposition}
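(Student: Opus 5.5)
The plan is to reduce everything to the mixed equation \eqref{eq:lyra-arbs-mixed}. By Theorem~\ref{thm:lyra-arbs-component-characterization}, any Lyra almost Ricci--Bourguignon soliton with projectable potential $Z=Z_1+Z_2$ must satisfy this equation. By Remark~\ref{rem:mixed-scale-twisting}, the equation does not involve $\lambda$, $\rho$ or $Z$. It therefore suffices to show that the pair $(k,\sigma)=(xy_1,s(x))$ violates \eqref{eq:lyra-arbs-mixed} at some point of every product neighborhood. Once that is done, no choice of potential field, soliton function or parameter can repair the failure.

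The key observation is that $\sigma=s(x)$ is base-dependent. This places us exactly in the setting of Theorem~\ref{thm:base-scale-rigidity}, since $n_2=q>1$. Concretely, $U(\sigma)=0$ and $XU(\sigma)=0$ for every vertical $U$, so \eqref{eq:mixed-rigidity-equation} collapses to $(n_2-1)XU(k)=0$, and hence $XU(k)=0$. Taking $X=\partial_x$ and $U=\partial_{y_1}$ gives
\[
\frac{\partial^{2}k}{\partial x\,\partial y_{1}}=1\neq0
\]
identically. This contradicts $XU(k)=0$ at every point, so it fails on any product neighborhood whatsoever.

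Equivalently, one could argue through the separation conclusion $k=k_1+k_2$ of Theorem~\ref{thm:base-scale-rigidity} and note that $xy_1$ is not additively separable on any product open set. The pointwise mixed-derivative argument above is cleaner, however, and does not need connectedness or simple connectivity.

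There is no real obstacle here. The only care needed is to confirm that the hypotheses of Theorem~\ref{thm:base-scale-rigidity} are met: $n_2=q>1$ ensures the factor $n_2-1$ is nonzero, and the scale is genuinely a function on $M_1=\mathbb R$ alone. It is also worth remarking that the conclusion holds for every $s$, including constant $s$. Thus a base-dependent Lyra scale cannot compensate for the nonseparable twisting $k=xy_1$. This contrasts with Example~\ref{ex:genuine-scale-compensation}, where a fiber-dependent scale does compensate.
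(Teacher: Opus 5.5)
Your proposal is correct and is essentially the paper's own proof: since $\sigma=s(x)$ is base-dependent, the $\sigma$-terms drop out of the mixed equation, Theorem~\ref{thm:base-scale-rigidity} (with $n_{2}=q>1$) forces $XU(k)=0$, and $X=\partial_{x}$, $U=\partial_{y_{1}}$ give $XU(k)=1$, a contradiction independent of $\lambda$, $\rho$ and the projectable $Z$. Your observation that the contradiction is pointwise, so it needs no connectedness or simple connectivity, is a fair small sharpening of how the paper phrases it.
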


\begin{proof}
	The scale is base-dependent. Therefore,
	Theorem~\ref{thm:base-scale-rigidity} requires
	\[
	XU(k)=0.
	\]
	However, for
	\[
	X=\frac{\partial}{\partial x},
	\qquad
	U=\frac{\partial}{\partial y_{1}},
	\]
	we have
	\[
	XU(k)=1.
	\]
	This contradicts the mixed soliton equation.
\end{proof}

\begin{remark}
	The obstruction in
	Proposition~\ref{prop:local-mixed-obstruction} is independent
	of \(\lambda\), \(\rho\), and the projectable potential field.
	Thus, it cannot be removed by changing the soliton type or its
	potential.
\end{remark}

\subsection{A compact global obstruction}
\label{subsec:compact-global-obstruction}

\begin{theorem}[Compact sign obstruction]
	\label{thm:compact-sign-obstruction}
	
	Let
	\[
	\left(
	M,g_{L},\nabla^{L}u,\lambda,\rho
	\right)
	\]
	be a gradient Lyra almost Ricci--Bourguignon soliton on a
	compact, connected, oriented Riemannian manifold without
	boundary. Then the following cases are impossible:
	\begin{enumerate}
		\item
		\[
		(1-n\rho)R^{L}\leq0
		\quad\text{and}\quad
		\lambda>0;
		\]
		\item
		\[
		(1-n\rho)R^{L}\geq0
		\quad\text{and}\quad
		\lambda<0.
		\]
	\end{enumerate}
\end{theorem}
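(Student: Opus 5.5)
The argument rests on the gradient trace identity of Corollary~\ref{cor:gradient-lyra-arbs-trace} and the compact integral identity of Corollary~\ref{cor:compact-gradient-lyra-arbs}. Taking the \(g_{L}\)-trace of \eqref{eq:gradient-lyra-arbs} gives the pointwise relation
\[
(1-n\rho)R^{L}+\Delta^{L}u=n\lambda .
\]
Since \(M\) is compact, connected, oriented and without boundary, and \(g_{L}\) is Riemannian, we may integrate against \(d\mu_{L}\). The divergence theorem gives \(\int_{M}\Delta^{L}u\,d\mu_{L}=0\), so we obtain \eqref{eq:compact-gradient-integral}:
\[
\int_{M}(1-n\rho)R^{L}\,d\mu_{L}=n\int_{M}\lambda\,d\mu_{L}.
\]

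\textbf{Case 1.} Suppose \((1-n\rho)R^{L}\le 0\) pointwise and \(\lambda>0\) pointwise. Then the left-hand side is \(\le 0\). The right-hand side is strictly positive, because \(\lambda\) is a continuous positive function, \(\operatorname{Vol}_{L}(M)>0\), and \(n\geq1\). This is a contradiction.

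\textbf{Case 2.} This is symmetric: the left-hand side is \(\ge0\), while the right-hand side is strictly negative.

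If the hypotheses are read with \(\lambda\) constant, as in the shrinking/expanding terminology of Section~\ref{subsec:lyra-arbs-definition}, the argument is unchanged.

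The pointwise trace relation gives a sharper picture, and we can note it as a remark. In Case 1 it yields \(\Delta^{L}u=n\lambda-(1-n\rho)R^{L}>0\), so \(u\) is strictly subharmonic on a closed manifold. This contradicts the maximum principle at a maximum point of \(u\), giving an alternative proof that does not use integration.

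The only step needing care is the justification that \(\int_{M}\Delta^{L}u\,d\mu_{L}=0\). Here \(\Delta^{L}u=\operatorname{div}_{L}(\nabla^{L}u)\) is taken with respect to the Riemannian volume form \(d\mu_{L}\) of \(g_{L}\). This requires that \(g_{L}\) be Riemannian, which is guaranteed by the hypothesis since \(\varphi>0\) and \(g\) is Riemannian, and that \(M\) be closed. Strict positivity of \(\int\lambda\,d\mu_{L}\) follows from continuity and positivity of \(\lambda\) on a set of positive measure. I do not expect a genuine obstacle beyond checking these hypotheses.
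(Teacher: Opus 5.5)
Your proposal is correct and is essentially the paper's proof. You integrate the trace identity $(1-n\rho)R^{L}+\Delta^{L}u=n\lambda$, discard $\int_{M}\Delta^{L}u\,d\mu_{L}=0$ by the divergence theorem, and compare the signs of the two sides. Your maximum-principle remark is also valid, since the sign of $\Delta^{L}u$ at a maximum or minimum of $u$ is contradicted; it gives a pointwise alternative that the paper does not use.
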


\begin{proof}
	Using the compact identity
	\eqref{eq:compact-gradient-integral}, we have
	\[
	(1-n\rho)
	\int_{M}R^{L}\,d\mu_{L}
	=
	n
	\int_{M}\lambda\,d\mu_{L}.
	\]
	In the first case, the left-hand side is nonpositive while the
	right-hand side is positive. In the second case, the left-hand
	side is nonnegative while the right-hand side is negative.
	Both cases are impossible.
\end{proof}

\begin{corollary}
	\label{cor:critical-parameter-nonexistence}
	
	At the critical value
	\[
	\rho=\frac1n,
	\]
	every compact gradient Lyra almost Ricci--Bourguignon soliton
	satisfies
	\begin{equation}
		\int_{M}\lambda\,d\mu_{L}=0.
		\label{eq:critical-zero-mean}
	\end{equation}
	Consequently, no such soliton exists when \(\lambda\) has a
	strict constant sign.
\end{corollary}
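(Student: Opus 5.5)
The corollary follows from the compact integral identity of Corollary~\ref{cor:compact-gradient-lyra-arbs}, specialized to the critical parameter. Setting \(\rho=1/n\) in \eqref{eq:compact-gradient-integral} makes the coefficient \(1-n\rho\) vanish. The left-hand side is then zero regardless of the sign or size of \(R^{L}\), and dividing by \(n\neq0\) gives \eqref{eq:critical-zero-mean}.

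This can also be seen directly from the trace identity \eqref{eq:gradient-lyra-arbs-trace}. At \(\rho=1/n\) it reads
\[
\Delta^{L}u=n\lambda .
\]
Since \(M\) is compact, oriented and without boundary, integrating with respect to \(d\mu_{L}\) and applying the divergence theorem for \(g_{L}\) gives \(\int_{M}\Delta^{L}u\,d\mu_{L}=0\). This is the same fact used in the proof of Corollary~\ref{cor:compact-gradient-lyra-arbs}, and it is consistent with Remark~\ref{rem:critical-bourguignon-parameter}.

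For the nonexistence statement, suppose \(\lambda>0\) everywhere, or \(\lambda<0\) everywhere. Because \(\mu_{L}\) is a smooth positive density on a nonempty compact manifold, \(\operatorname{Vol}_{L}(M)>0\). Hence \(\int_{M}\lambda\,d\mu_{L}\) is strictly positive, or strictly negative, which contradicts \eqref{eq:critical-zero-mean}. Alternatively, this is the case \(1-n\rho=0\) of Theorem~\ref{thm:compact-sign-obstruction}: both hypotheses \((1-n\rho)R^{L}\leq0\) and \((1-n\rho)R^{L}\geq0\) hold trivially, so neither \(\lambda>0\) nor \(\lambda<0\) is possible.

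I expect no real obstacle. The only point to check is that the divergence theorem is applied with the Lyra volume form \(d\mu_{L}\) from \eqref{eq:lyra-volume-element}, so that it matches the operator \(\Delta^{L}\). One should also read ``strict constant sign'' as \(\lambda\) being everywhere positive or everywhere negative, which is what makes the integral nonzero.
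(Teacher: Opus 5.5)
Your proposal is correct and follows the paper's intended argument. The paper states this corollary without a separate proof, as the case \(\rho=1/n\) of the compact integral identity \eqref{eq:compact-gradient-integral}, or equivalently as the degenerate case of Theorem~\ref{thm:compact-sign-obstruction}, which is exactly the route you take. Your remarks on pairing \(d\mu_{L}\) with \(\Delta^{L}\) in the divergence theorem, and on reading ``strict constant sign'' as everywhere positive or everywhere negative, are the only points that need care, and you handle both.
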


The exact models above show that the Lyra scale may cancel a
nonconstant warping function and produce flat or nonflat Einstein
solitons. More importantly,
Example~\ref{ex:genuine-scale-compensation} demonstrates that the
scale can preserve genuinely twisted structures that are excluded
in the ordinary setting. Proposition
\ref{prop:local-mixed-obstruction} and
Theorem~\ref{thm:compact-sign-obstruction} show that the same
compatibility equations also yield effective local and global
nonexistence criteria.

\section{Conclusion}
\label{sec:conclusion}

This paper defined Lyra almost Ricci--Bourguignon solitons on
twisted warped product manifolds and established their component,
inheritance, rigidity, and Einstein characterizations. The main
contribution is the identification of the Lyra scale as an active
geometric mechanism that can preserve, restrict, or compensate for
the twisting structure. The examples and nonexistence results confirm
that this interaction produces genuinely new behavior beyond the
ordinary twisted-product setting.

\section*{Acknowledgements}

The authors extend their appreciation to Princess Nourah Bint Abdulrahman University for funding this research under Researchers Supporting Project number (PNURSP2026R895), Princess Nourah Bint Abdulrahman University, Riyadh, Saudi Arabia.

\section*{Declarations}

\textbf{Data Availability:} No new data were created or analyzed in this study.

\textbf{Conflicts of Interest:} The authors declare that they have no known financial conflicts of interest or personal relationships that could have influenced the work presented in this paper.

\textbf{Funding:} This work was supported and funded by Princess Nourah Bint Abdulrahman University for funding this research under Researchers Supporting Project number (PNURSP2026R895), Princess Nourah Bint Abdulrahman University, Riyadh, Saudi Arabia.

\textbf{Ethics Approval:} The authors hereby affirm that the contents of this manuscript are original. Furthermore, it has neither been published elsewhere in any language fully or partly, nor is it under review for publication anywhere.

\vspace{5mm}
\vspace{2mm}
	

\begin{thebibliography}{99}
	
	\bibitem{Lyra1951}
	G. Lyra,
	``\"{U}ber eine Modifikation der Riemannschen Geometrie,''
	\emph{Mathematische Zeitschrift},
	vol. 54, pp. 52--64, 1951.
	doi: 10.1007/BF01175135.
	
	\bibitem{SenDunn1971}
	D. K. Sen and K. A. Dunn,
	``A scalar-tensor theory of gravitation in a modified Riemannian
	manifold,''
	\emph{Journal of Mathematical Physics},
	vol. 12, no. 4, pp. 578--586, 1971.
	doi: 10.1063/1.1665623.
	
	\bibitem{CuzinattoMoraisPimentel2021}
	R. R. Cuzinatto, E. M. de Morais, and B. M. Pimentel,
	``Lyra scalar-tensor theory: A scalar-tensor theory of gravity on
	Lyra manifold,''
	\emph{Physical Review D},
	vol. 103, Art. 124002, 2021.
	doi: 10.1103/PhysRevD.103.124002.
	
	\bibitem{BertinEtAl2025}
	M. C. Bertin, R. R. Cuzinatto, J. A. Paquiyauri, and B. M. Pimentel,
	``The Lyra--Schwarzschild spacetime,''
	\emph{Universe},
	vol. 11, no. 9, Art. 315, 2025.
	doi: 10.3390/universe11090315.
	
	\bibitem{ValadaoSobreroBergliaffa2026}
	E. C. Valad\~{a}o, F. Sobrero, and S. E. Perez Bergliaffa,
	``Scalar-tensor theories in the Lyra geometry: Invariance under local
	transformations of length units and the Jordan--Einstein frame
	conundrum,''
	\emph{General Relativity and Gravitation},
	vol. 58, Art. 6, 2026.
	doi: 10.1007/s10714-025-03509-8.
	
	\bibitem{BishopONeill1969}
	R. L. Bishop and B. O'Neill,
	``Manifolds of negative curvature,''
	\emph{Transactions of the American Mathematical Society},
	vol. 145, pp. 1--49, 1969.
	doi: 10.1090/S0002-9947-1969-0251664-4.
	
	\bibitem{ONeill1983}
	B. O'Neill,
	\emph{Semi-Riemannian Geometry with Applications to Relativity}.
	Academic Press, New York, 1983.
	
	\bibitem{PongeReckziegel1993}
	R. Ponge and H. Reckziegel,
	``Twisted products in pseudo-Riemannian geometry,''
	\emph{Geometriae Dedicata},
	vol. 48, pp. 15--25, 1993.
	doi: 10.1007/BF01265674.
	
	\bibitem{FernandezLopezEtAl2001}
	M. Fern\'{a}ndez-L\'{o}pez, E. Garc\'{\i}a-R\'{\i}o,
	D. N. Kupeli, and B. \"{U}nal,
	``A curvature condition for a twisted product to be a warped product,''
	\emph{Manuscripta Mathematica},
	vol. 106, no. 2, pp. 213--217, 2001.
	doi: 10.1007/s002290100204.
	
	\bibitem{Unal2001}
	B. \"{U}nal,
	``Doubly warped products,''
	\emph{Differential Geometry and its Applications},
	vol. 15, no. 3, pp. 253--263, 2001.
	doi: 10.1016/S0926-2245(01)00051-1.
	
	\bibitem{Besse2008}
	A. L. Besse,
	\emph{Einstein Manifolds}.
	Springer, Berlin, 2008,
	reprint of the 1987 edition.
	
	\bibitem{Chen2017}
	B.-Y. Chen,
	\emph{Differential Geometry of Warped Product Manifolds and
		Submanifolds}.
	World Scientific, Hackensack, 2017.
	
	\bibitem{ElsharkawyEtAl2025Pseudo}
	A. Elsharkawy, H. K. Elsayied, A. M. Tawfiq, and F. Alghamdi,
	``Geometric analysis of the pseudo-projective curvature tensor in
	doubly and twisted warped product manifolds,''
	\emph{AIMS Mathematics},
	vol. 10, no. 1, pp. 56--71, 2025.
	doi: 10.3934/math.2025004.
	
	\bibitem{ElsayiedTawfiqElsharkawy2025}
	H. K. Elsayied, A. M. Tawfiq, and A. Elsharkawy,
	``Mixed doubly sequential warped product manifolds,''
	\emph{Physica Scripta},
	vol. 100, no. 5, Art. 055229, 2025.
	doi: 10.1088/1402-4896/adcdd1.
	
	\bibitem{Hamilton1982}
	R. S. Hamilton,
	``Three-manifolds with positive Ricci curvature,''
	\emph{Journal of Differential Geometry},
	vol. 17, no. 2, pp. 255--306, 1982.
	doi: 10.4310/jdg/1214436922.
	
	\bibitem{Bourguignon1981}
	J.-P. Bourguignon,
	``Ricci curvature and Einstein metrics,''
	in \emph{Global Differential Geometry and Global Analysis},
	Lecture Notes in Mathematics, vol. 838,
	Springer, Berlin, pp. 42--63, 1981.
	doi: 10.1007/BFb0088841.
	
	\bibitem{CatinoEtAl2017}
	G. Catino, L. Cremaschi, Z. Djadli, C. Mantegazza, and L. Mazzieri,
	``The Ricci--Bourguignon flow,''
	\emph{Pacific Journal of Mathematics},
	vol. 287, no. 2, pp. 337--370, 2017.
	doi: 10.2140/pjm.2017.287.337.
	
	\bibitem{Dwivedi2021}
	S. Dwivedi,
	``Some results on Ricci--Bourguignon solitons and almost solitons,''
	\emph{Canadian Mathematical Bulletin},
	vol. 64, no. 3, pp. 591--604, 2021.
	doi: 10.4153/S0008439520000673.
	
	\bibitem{BlagaTastan2021}
	A. M. Blaga and H. M. Ta\c{s}tan,
	``Some results on almost $\eta$-Ricci--Bourguignon solitons,''
	\emph{Journal of Geometry and Physics},
	vol. 168, Art. 104316, 2021.
	doi: 10.1016/j.geomphys.2021.104316.
	
	\bibitem{Ghosh2022}
	A. Ghosh,
	``Certain triviality results for Ricci--Bourguignon almost solitons,''
	\emph{Journal of Geometry and Physics},
	vol. 182, Art. 104681, 2022.
	doi: 10.1016/j.geomphys.2022.104681.
	
	\bibitem{BinTurkiEtAl2022}
	N. Bin Turki, S. Shenawy, H. K. El-Sayied, N. Syied,
	and C. A. Mantica,
	``$\rho$-Einstein solitons on warped product manifolds and
	applications,''
	\emph{Journal of Mathematics},
	vol. 2022, Art. 1028339, 2022.
	doi: 10.1155/2022/1028339.
	
	\bibitem{ShenawyEtAl2023}
	S. Shenawy, N. Bin Turki, N. Syied, and C. A. Mantica,
	``Almost Ricci--Bourguignon solitons on doubly warped product
	manifolds,''
	\emph{Universe},
	vol. 9, no. 9, Art. 396, 2023.
	doi: 10.3390/universe9090396.
	
	\bibitem{PahanDutta2023}
	S. Pahan and S. Dutta,
	``Characterization of sequential warped product gradient
	Ricci--Bourguignon soliton,''
	\emph{Filomat},
	vol. 37, no. 27, pp. 9273--9285, 2023.
	doi: 10.2298/FIL2327273P.
	
	\bibitem{KayaOzgur2024}
	D. A\c{c}{\i}kg\"{o}z Kaya and C. \"{O}zg\"{u}r,
	``Ricci--Bourguignon solitons on sequential warped product
	manifolds,''
	\emph{Journal of Mathematical Physics, Analysis, Geometry},
	vol. 20, no. 2, pp. 205--220, 2024.
	doi: 10.15407/mag20.02.205.
	
	\bibitem{GulerUnal2025}
	S. G\"{u}ler and B. \"{U}nal,
	``Gradient $\rho$-Einstein solitons and applications,''
	\emph{Facta Universitatis, Series: Mathematics and Informatics},
	vol. 40, no. 1, pp. 233--248, 2025.
	doi: 10.22190/FUMI240813018G.
	
	\bibitem{AlSodaisEtAl2025}
	H. Al-Sodais, N. Bin Turki, S. Deshmukh, B.-Y. Chen,
	and H. M. Shah,
	``Some new characterizations of trivial Ricci--Bourguignon solitons,''
	\emph{Journal of Mathematics},
	vol. 2025, Art. 7917018, 2025.
	doi: 10.1155/jom/7917018.
	
	\bibitem{ElsharkawyEtAl2025Yamabe}
	A. Elsharkawy, H. E. Semary, U. C. De, and E. F. Wanas,
	``Almost quasi-Yamabe solitons and gradient almost quasi-Yamabe
	solitons on twisted and doubly warped product manifolds,''
	\emph{The European Physical Journal Plus},
	vol. 140, Art. 1162, 2025.
	doi: 10.1140/epjp/s13360-025-07042-0.
	
	\bibitem{ElsharkawyTawfiq2026}
	A. Elsharkawy and A. M. Tawfiq,
	``Ricci--Bourguignon solitons and Einstein metrics on twisted warped
	product manifolds,''
	\emph{Mathematical Methods in the Applied Sciences},
	vol. 49, no. 2, pp. 841--850, 2026.
	doi: 10.1002/mma.70194.
	
	\bibitem{SemaryEtAl2026}
	H. E. Semary, A. Aldukeel, B.-Y. Chen, U. C. De,
	E. F. Wanas, and A. Elsharkawy,
	``$\rho$-Einstein solitons on doubly warped product manifolds and
	applications,''
	\emph{Chinese Journal of Physics},
	vol. 101, pp. 758--772, 2026.
	doi: 10.1016/j.cjph.2026.03.002.
	
	\bibitem{ElsharkawyCesaranoWanas2026}
	A. Elsharkawy, C. Cesarano, and E. F. Wanas,
	``Rigidity and structural constraints for $\rho$-Einstein solitons
	on twisted warped product manifolds,''
	\emph{AIMS Mathematics},
	vol. 11, no. 6, pp. 16288--16304, 2026.
	doi: 10.3934/math.2026669.
	
\end{thebibliography}
\end{document}